\newtheorem{theorem}{Theorem}[section]
\newtheorem{corollary}[theorem] {Corollary}
\newtheorem{definition}[theorem]{Definition}
\newtheorem{example}[theorem]{Example}
\newtheorem{lemma}[theorem]{Lemma}
\newtheorem{proposition}[theorem]{Proposition}
\newtheorem{remark}[theorem]{Remark}
\newcommand\R{\mathbb{R}}
\newcommand\Z{\mathbb{Z}}
\newcommand{\TC}{\mathrm{TC}}
\newcommand{\ct}{\mathrm{cat}}
\newcommand{\zl}{\mathrm{zcl}}
\newcommand{\sct}{\mathrm{secat}}
\newcommand{\secat}{\mathrm{secat}}
\newcommand{\nil}{\mathrm{nil}}
\newcolumntype{x}[1]{>{\centering\arraybackslash}p{#1}}
\begin{document}
\title[]{Sectional category with respect to group actions and sequential topological complexity of fibre bundles}

\author[R. Singh]{Ramandeep Singh Arora}
\address{Department of Mathematics, Indian Institute of Science Education and Research Pune, India}
\email{ramandeepsingh.arora@students.iiserpune.ac.in}
\email{ramandsa@gmail.com}

\author[N. Daundkar]{Navnath Daundkar}
\address{Department of Mathematics, Indian Institute of Science Education and Research Pune, India.}
\email{navnath.daundkar@acads.iiserpune.ac.in}

\author[S. Sarkar]{Soumen Sarkar}
\address{Department of Mathematics, Indian Institute of Technology Madras, India.}
\email{soumen@iitm.ac.in}


\begin{abstract}
Let $X$ be a $G$-space. In this paper, we introduce the notion of sectional category with respect to $G$. As a result, we obtain $G$-homotopy invariants: the LS category with respect to $G$, the sequential topological complexity with respect to $G$ (which is same as the weak sequential equivariant topological complexity $\mathrm{TC}_{k,G}^w(X)$ in the sense of Farber and Oprea), and the strong sequential topological complexity with respect to $G$, denoted by $\mathrm{cat}_G^{\#}(X)$, $\mathrm{TC}_{k,G}^{\#}(X)$, and $\mathrm{TC}_{k,G}^{\#,*}(X)$, respectively. We explore several relationships among these invariants and well-known ones, such as the (equivariant) LS category, the sequential (equivariant) topological complexity, and the sequential strong equivariant topological complexity. In one of our main results, we give an additive upper bound for $\mathrm{TC}_k(E)$ for a fibre bundle $F \hookrightarrow E \to B$ with structure group $G$ in terms of certain motion planning covers of the base $B$ and the invariant $\mathrm{TC}_{k,G}^{\#,*}(F)$ or $\mathrm{cat}_{G^k}^{\#}(F^k)$, where the fibre $F$ is viewed as a $G$-space. As applications of these results, we give bounds on the LS category and the sequential topological complexity of generalized projective product spaces and mapping tori.

\end{abstract}

\keywords{LS category, sequential topological complexity, weak equivariant topological complexity, fibre bundles, sectional category, generalized projective product spaces}
\subjclass[2020]{55M30, 55R91, 55S40}
\maketitle

\section{Introduction}
Suppose that $B$ is a $G$-space and $p \colon E \to B$ is a fibration. We define the \textit{sectional category of $p$ with respect to $G$}, denoted by $\secat_G^{\#}(p)$, to be the smallest number $n$ such that there exists a $G$-invariant open cover $\{U_i\}^{n}_{i=1}$ of the base $B$ such that over each $U_i$ there exists a continuous section $s_i$ of $p$. 
This is a fibre-homotopy invariant which bounds the sectional category of $p$ from above, and if $p$ is a $G$-fibration, then it is bounded above by equivariant sectional category.

As a result, we obtain new numerical homotopy invariants for a $G$-space $X$, namely the LS category with respect to $G$, the sequential topological complexity with respect to $G$, and the strong sequential topological complexity with respect to $G$, denoted by $\ct_G^{\#}(X)$, $\TC_{k,G}^{\#}(X)$, and $\TC_{k,G}^{\#,*}(X)$, respectively.
More precisely, if $P_{x_0}X$ is the based path space of $(X,x_0)$ equipped with compact-open topology, i.e.,
$$
    P_{x_0}X = \{\alpha \colon I \to X \mid \alpha(0) = x_0\},
$$
and $e_X\colon P_{x_0}X \to X$ is the path space fibration defined by $e_X(\alpha)=\alpha(1)$, then $\ct_G^{\#}(X) := \secat_G^{\#}(e_X)$. 
Let $X^I$ be the free path space of $X$ with compact open topology.  Consider the generalized free path space fibration $e_{k,X}\colon X^I\to X^k$ defined by 
$$
e_{k,X}(\alpha) = \left(\alpha(0), \alpha\left(\frac{1}{k-1}\right), \ldots, \alpha\left(\frac{i}{k-1}\right), \ldots, \alpha\left(\frac{k-2}{k-1}\right), \alpha(1)\right),
$$ 
then we define $\TC_{k,G}^{\#}(X) := \secat_{G}^{\#}(e_{k,X})$ for the diagonal $G$-action on $X^k$, and $\TC_{k,G}^{\#,*}(X) := \secat_{G^k}^{\#}(e_{k,X})$ for the componentwise $G^k$-action on $X^k$.


In \Cref{sec:secat-related-invariants}, we study the various properties of these invariants. 
In \Cref{prop:cat-wrtG-properties}, \Cref{cor: cat-wrt-G leq equi-cat} and \Cref{prop:new-TC-properties}, we list several inequalities relating $\ct_G^{\#}(X)$, $\TC_{k,G}^{\#}(X)$, $\TC_{k,G}^{\#,*}(X)$ to their respective non-equivariant counterparts $\ct(X)$ and $\TC_k(X)$, and equivariant counterparts $\ct_G(X)$, $\TC_{k,G}(X)$ and $\TC_{k,G}^{*}(X)$, see \Cref{sec:preliminaries} and \Cref{rem_equitc} for definitions. 
Then we prove the homotopy invariance of these invariants in \Cref{thm:invariance-cat-wrt-G} and \Cref{thm:invariance-tck-wrt-G}, and obtain their product inequalities in \Cref{prop:new-Gcat-prod-ineq} and \Cref{prop:prod-ineq-tck-wrt_G}.
In \Cref{prop:ct-leq-tc-wrtG} and \Cref{prop:ct-leq-strongtc-wrtG}, we prove inequalities 
$$
    \ct_{H}^{\#}(X^{k-1}) \leq \TC_{k,H}^{\#}(X) \leq \TC_{k,G}^{\#}(X) \leq \ct_{G}^{\#}(X^{k})
$$
and 
$$
\ct_{G^{k-1}}^{\#}(X^{k-1}) \leq \TC_{k,G}^{\#,*}(X) \leq \ct_{G^k}^{\#}(X^{k})
$$ 
respectively, where $H$ is a stabilizer subgroup of a point in $X$. 
For a path-connected topological group $A$ with an action of $G$ via topological group homomorphisms, we show that 
$$
    \TC^{\#}_{k,G}(A)=\ct^{\#}_G(A^{k-1})
$$ 
in \Cref{theorem:TC-wrt-G=cat-wrt-G for topological group}.
For any finite group $G$ acting on $S^n$, we compute the exact value of $\ct_{G}^{\#}(S^n)$ in \Cref{example:new-Gcat(S^n)}, and exact values of $\TC^{\#}_{k,G}(S^n)$ and $\TC^{\#,*}_{k,G}(S^n)$ when $n$ is even and both of these invariants can either be $k$ or $k+1$ when $n$ is odd, see \Cref{example:TC(S^n)-wrt-G}. 
Similar computations are also done for $S^1$ acting on $S^n$, see \Cref{example: cat(S^n)-wrt-S^1} and \Cref{example:TC(S^n)-wrt-S^1}.
Various other results are given in \Cref{sec:secat-related-invariants} including the dimension-connectivity upper bound on $\TC_{k,G}^{\#}(X)$ in \Cref{prop:dim-conn-ub}.

In terms of these new invariants we get the following theorem in \Cref{sec:seqtc-fib-bundles}.

\begin{theorem}
\label{thm-for-intro: tck-ub-using-new-inva}
Let $F \hookrightarrow E \xrightarrow{p} B$ be a fibre bundle with structure group $G$ where $E^k$ is a completely normal space. 
Let $\{U_1, \dots, U_m\}$ be an open cover of $B^k$ with sequential motion planners ${s_i \colon U_i \to B^I}$.
If there exists a closed cover $\{R_1, \dots, R_m\}$ of $B$ with local trivializations $f_i \colon p^{-1}(R_i) \to R_i \times F$ of $F \hookrightarrow E \xrightarrow{p} B$ such that $f_i$'s form a $G$-atlas of $F \hookrightarrow E \xrightarrow{p} B$ and $s_i(U_i)\subseteq  R_i^I$, then 
\begin{equation}\label{eq:seqtc-ub-using-gcat}
 \TC_k(E) \leq m+\TC_{k,G}^{\#,*}(F)-1.   
\end{equation}
In particular, $\TC_k(E) \leq m+\ct_{G^k}^{\#}(F^k)-1$ and $\TC_k(E) \leq m+\TC_{k,G}^*(F)-1$.
\end{theorem}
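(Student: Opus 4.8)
The plan is to use the identity $\TC_k(E)=\secat(e_{k,E})$ and to manufacture, out of the given data, \emph{two} open covers $\{A_i\}_{i=1}^m$ and $\{B_j\}_{j=1}^n$ of $E^k$ over each of whose pairwise intersections $e_{k,E}$ admits a (non-equivariant) section, where $n:=\TC_{k,G}^{\#,*}(F)$. The bound then follows from the standard subadditivity of sectional category: if $q\colon E'\to Y$ is a fibration over a completely normal $Y$ and $\{A_i\}_{i=1}^m$, $\{B_j\}_{j=1}^n$ are open covers of $Y$ with $q$ sectioned over every $A_i\cap B_j$, then $\secat(q)\le m+n-1$ — this is exactly where complete normality of $E^k$ is used. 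So fix a $G^k$-invariant open cover $\{V_1,\dots,V_n\}$ of $F^k$ (for the componentwise $G^k$-action) together with sections $\sigma_j\colon V_j\to F^I$ of $e_{k,F}$. The hypothesis $s_i(U_i)\subseteq R_i^I$ forces $U_i\subseteq R_i^k$, so $A_i:=(p^k)^{-1}(U_i)\subseteq E^k$ consists of tuples all of whose coordinates lie in $p^{-1}(R_i)$, and the trivialization $f_i\colon p^{-1}(R_i)\to R_i\times F$ yields a continuous map
\[
\phi_i\colon A_i\to F^k,\qquad \phi_i(e_1,\dots,e_k)=\big(\pi_F f_i(e_1),\dots,\pi_F f_i(e_k)\big),
\]
with $\pi_F$ the projection onto $F$. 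Put $W_{i,j}:=\phi_i^{-1}(V_j)$, open in $E^k$. Since $\{U_i\}$ covers $B^k$, $\{A_i\}$ covers $E^k$; since moreover $\{V_j\}$ covers $F^k$, the refinement $\{W_{i,j}\}_{i,j}$ covers $E^k$ too.

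The only essential point is that the $W_{i,j}$ assemble along the first index. Because $\{f_i\}$ is a $G$-atlas, on $A_i\cap A_{i'}$ the maps $\phi_i$ and $\phi_{i'}$ differ by the transition function acting \emph{separately} in each coordinate: writing $b_l=p(e_l)$ and letting $g_{i'i}$ be the transition function, $\phi_{i'}(e_1,\dots,e_k)=\big(g_{i'i}(b_1)\cdot\pi_F f_i(e_1),\dots,g_{i'i}(b_k)\cdot\pi_F f_i(e_k)\big)$, i.e.\ $\phi_i(e_1,\dots,e_k)$ and $\phi_{i'}(e_1,\dots,e_k)$ lie in the same $G^k$-orbit for the componentwise action. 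As $V_j$ is $G^k$-invariant, $A_i\cap A_{i'}\cap\phi_i^{-1}(V_j)=A_i\cap A_{i'}\cap\phi_{i'}^{-1}(V_j)$. Setting $B_j:=\bigcup_i W_{i,j}$ (an open subset of $E^k$), a short computation using this identity gives $A_i\cap B_j=W_{i,j}$ for all $i,j$. Hence $\{A_1,\dots,A_m\}$ and $\{B_1,\dots,B_n\}$ are open covers of $E^k$ and it suffices to produce a section of $e_{k,E}$ over each $W_{i,j}$.

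To build that section: given $(e_1,\dots,e_k)\in W_{i,j}$ with $b_l=p(e_l)$, let $\gamma:=s_i(b_1,\dots,b_k)$, which by hypothesis is a path lying in $R_i$ and visiting $b_l$ at time $\tfrac{l-1}{k-1}$, and let $\delta:=\sigma_j(\phi_i(e_1,\dots,e_k))$, a path in $F$ visiting $\pi_F f_i(e_l)$ at time $\tfrac{l-1}{k-1}$. Since $\gamma(t)\in R_i$ for all $t$, the formula $\Gamma(t):=f_i^{-1}(\gamma(t),\delta(t))$ defines a path in $E$ with $\Gamma(\tfrac{l-1}{k-1})=f_i^{-1}(b_l,\pi_F f_i(e_l))=e_l$; continuity of $(e_1,\dots,e_k)\mapsto\Gamma$ follows from continuity of $s_i$, $\sigma_j$, $f_i^{-1}$ and the exponential law, so $\Gamma$ defines a section of $e_{k,E}$ over $W_{i,j}$. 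Applying the subadditivity lemma to $\{A_i\}$ and $\{B_j\}$ — legitimate because $E^k$ is completely normal — yields $\TC_k(E)=\secat(e_{k,E})\le m+n-1=m+\TC_{k,G}^{\#,*}(F)-1$. The two ``in particular'' bounds then follow from the inequalities $\TC_{k,G}^{\#,*}(F)\le\ct_{G^k}^{\#}(F^k)$ of \Cref{prop:ct-leq-strongtc-wrtG} and $\TC_{k,G}^{\#,*}(F)\le\TC_{k,G}^{*}(F)$ recorded in \Cref{prop:new-TC-properties}.

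The main obstacle is conceptual rather than computational: one must recognize that the indeterminacy in the fibre coordinates coming from overlapping charts of a $G$-atlas is precisely a \emph{componentwise} $G^k$-indeterminacy, which is exactly why the invariant $\TC_{k,G}^{\#,*}$ (built from the $G^k$-action on $F^k$, not the diagonal one) is the right quantity and why the sets $B_j$ are well-defined. On the technical side, the passage from the crude $mn$-element cover $\{W_{i,j}\}$ to the sharp value $m+n-1$ is carried by the subadditivity lemma, and it is there that the complete-normality hypothesis on $E^k$ is genuinely needed; the remaining steps — that the various families cover, and that the path-valued assignments are continuous — are routine.
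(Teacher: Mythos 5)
Your proposal is correct, but it is organized quite differently from the paper's argument, and the comparison is worth recording. The paper first proves a general statement (\Cref{additive inequality for TC for fibre bundles}) by explicitly constructing an $(m+n-1)$-element cover of $E^k$ out of the sets $h_i^{-1}\bigl((X_i\setminus X_{i-1})\times(Y_{l-i+1}\setminus Y_{l-i})\bigr)$, which forces it to impose conditions on closures (the induced trivializations must extend over $\overline{U_i}$, and $(\overline{U_i}\cap\overline{U_{i'}})\times V_j$ must be preserved by the transition maps); the closedness of the $R_i$'s is used precisely to guarantee these extensions, and the separation argument is then carried out inside $E^k$ through the charts. You instead package the same combinatorial content into an abstract two-cover subadditivity lemma for $\secat$ over a completely normal base, and reduce the geometry to checking that the sets $W_{i,j}=\phi_i^{-1}(V_j)$ glue along the first index into globally defined open sets $B_j$ with $A_i\cap B_j=W_{i,j}$ — which is exactly where the $G$-atlas condition and the componentwise $G^k$-invariance of the $V_j$ enter, the same key insight as in the paper. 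Your route is cleaner: the separation of the sets $Z_{i,l}=(X_i\setminus X_{i-1})\cap(Y_{l-i+1}\setminus Y_{l-i})$ needs only that the $X_{i}$ and $Y_{j}$ are open, so no closure or extension hypotheses are required, and in fact your argument never uses that the $R_i$ are closed. The one thing you should not leave implicit is the subadditivity lemma itself: it is true, but it is not an off-the-shelf citation in the form you state it; it is proved by exactly the $Z_{i,l}$-machinery of \Cref{prop:new-Gsecat-prod-ineq} (or \cite[Theorem 1.37]{CLOT}) transplanted from products of two spaces to intersections of two covers of one space, and a sentence to that effect (or a short proof) is needed to make the write-up self-contained. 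The remaining steps — the identity $A_i\cap B_j=W_{i,j}$, the explicit section $\Gamma(t)=f_i^{-1}(\gamma(t),\delta(t))$ over $W_{i,j}$, and the deduction of the two ``in particular'' bounds from \Cref{prop:ct-leq-strongtc-wrtG} and \Cref{prop:new-TC-properties} — are all correct as written.
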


A few attempts have been made in determining how the topological complexity of the total space of a fibre bundle relates to the topological complexities of its fibre and the base space (\cite{Farbergrant}, \cite{daundkarlens}, \cite{Grantfibrations}). 
Later, Paul and Sen generalizing the work of Dranishnikov \cite{strongeqtc} to the sequential setting defined the \emph{sequential strong equivariant topological complexity}, denoted by $\TC_{k,G}^{*}(X)$, and provided the following additive upper bound on $\TC_k(E)$ given by
\begin{equation}\label{eq:Dra}
    \TC_k(E) \leq \TC_k(B)+\TC_{k,G}^{*}(F)-1,
\end{equation}
where $E$, $B$ are locally compact metric ANR-spaces and $G$ is acting properly on $F$, see \cite{PaulSen}. 
In \Cref{remark:TC-wrt-G < TC_G}, we show that $\TC_{k,G}^*(X)$ can be infinite, but $\TC_{k,G}^{\#,*}(X)$ is finite. Hence, the upper bound \eqref{eq:seqtc-ub-using-gcat} could be more appropriate to use for computations than \eqref{eq:Dra} in certain cases, see \Cref{thm:tck-main-example}. 
Furthermore, we state a similar result for $\ct(E)$ in \Cref{sec:ls-category-fibre-bundles} following \cite[Theorem 2.6]{Naskar}.

We recall that the generalized projective product spaces are total spaces of certain fibre bundles, see \Cref{tau-atlas for projective product spaces}.
Then, we provide bounds on the sequential topological complexity of several generalized projective product spaces when the base space is $\R P^1$, see \Cref{prop: cat and tck of gpps with base S1}. 
In \Cref{thm:tck-main-example}, we obtain a lower bound on the sequential topological complexity of generalized projective product spaces when the fibre is the product of various dimensional spheres, see \eqref{eq:seqtc-gpps1}. 
Also, we obtain an upper bound on the sequential topological complexity and compute the LS category for these spaces when the base space is $\mathbb{R}P^1$, see  \eqref{eq:cat-gpps1S1} and \eqref{eq:seqtc-gpps1S1}. 

In \Cref{sec:applica-mapping tori}, we establish an upper bound on the LS-category and the sequential topological complexity of a mapping torus.

\begin{theorem}
    Suppose $f \colon M \to M$ is a homeomorphism of a topological space $M$ such that the corresponding mapping torus $M_f$ is completely normal. 
    If $\{V_1, \dots, V_n\}$ is an open cover of $M$ such that each $V_j$ is invariant under $f$, then $\ct(M_f) \leq n+1$.   
\end{theorem}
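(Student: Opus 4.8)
The plan is to realize $M_f$ as the total space of a fibre bundle over $S^1$ and then apply the LS-category analogue of \Cref{thm-for-intro: tck-ub-using-new-inva} proved in \Cref{sec:ls-category-fibre-bundles}. Recall that $M_f=(M\times[0,1])/\!\sim$ with $(x,1)\sim(f(x),0)$, and that projection to the last coordinate gives a fibre bundle $M\hookrightarrow M_f\xrightarrow{\,p\,}S^1$ whose structure group is the cyclic group $G=\langle f\rangle$ generated by $f$, acting on $M$ by powers of $f$. The assumption that $M_f$ is completely normal is imposed precisely so that this fibre-bundle theorem is available. We may assume $M$ is path-connected, as a null-homotopic subset lies in a single path component (otherwise one runs the argument componentwise).

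First I would fix the data on the base $S^1$. Choose two proper open arcs $W_1,W_2$ covering $S^1$ whose overlap $W_1\cap W_2$ contains the chosen basepoint; this is a categorical open cover realizing $\ct(S^1)=2$, and each $W_i$ can be contracted to the basepoint within a slightly larger proper closed arc $R_i\supseteq\overline{W_i}$, with $R_1\cup R_2=S^1$. Since each $R_i$ is an interval, $p$ trivializes over it, so fix trivializations $g_i\colon p^{-1}(R_i)\xrightarrow{\ \cong\ }R_i\times M$. On the two components of $R_1\cap R_2$ the transition $g_1\circ g_2^{-1}$ equals $\mathrm{id}_M$ on one and $f^{\pm1}$ on the other, hence is valued in $G=\langle f\rangle$; so $\{g_1,g_2\}$ is a $G$-atlas of $p$, and one can arrange the null-homotopies of the $W_i$ to have image in the corresponding $R_i$. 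This is exactly the input required by the LS-category fibre-bundle theorem, with $m=2$ and closed cover $\{R_1,R_2\}$.

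Next I would bound $\ct_G^{\#}(M)$. By definition $\ct_G^{\#}(M)=\secat_G^{\#}(e_M)$ for the based path fibration $e_M\colon P_{x_0}M\to M$, so $\ct_G^{\#}(M)$ is the least integer $N$ for which $M$ admits a $G$-invariant open cover by $N$ sets, each admitting a section of $e_M$, equivalently each null-homotopic in $M$. The given cover $\{V_1,\dots,V_n\}$ is of exactly this type: each $V_j$ is null-homotopic in $M$ and invariant under $f$, hence under all of $G=\langle f\rangle$. Therefore $\ct_G^{\#}(M)\le n$.

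Finally, feeding the categorical cover $\{W_1,W_2\}$, the $G$-atlas $\{g_1,g_2\}$ over $\{R_1,R_2\}$, and the bound $\ct_G^{\#}(M)\le n$ into the LS-category version of \Cref{thm-for-intro: tck-ub-using-new-inva} yields
\[
\ct(M_f)\ \le\ m+\ct_G^{\#}(M)-1\ \le\ 2+n-1\ =\ n+1.
\]
The only step beyond routine bookkeeping is identifying the monodromy of the mapping-torus bundle with $\langle f\rangle$ and checking that the two semicircular trivializations form a $G$-atlas with transition functions $\mathrm{id}_M$ and $f^{\pm1}$; this is precisely where the $f$-invariance of the $V_j$ enters the proof of the fibre-bundle theorem, namely in gluing the local pieces $g_i^{-1}(R_i\times V_j)$ across the component of $R_1\cap R_2$ that carries the monodromy. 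Everything else follows directly from the definitions of $\ct$, $\ct_G^{\#}$, and $\secat_G^{\#}$.
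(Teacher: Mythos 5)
Your proof is correct and follows essentially the same route as the paper: realize $M_f$ as a fibre bundle over $S^1$, trivialize it over two closed arcs whose interiors cover $S^1$, and invoke the additive LS-category bound for fibre bundles (you go through \Cref{cor: additive ub on ct of fb} via the structure group $\langle f\rangle$ and $\ct_{\langle f\rangle}^{\#}(M)\le n$, while the paper applies \Cref{thm: additive upper bound for ls category} directly by checking that $V_j\times(C_1\cap C_2)$ is preserved by the explicit transition maps $\phi^{-1}\circ\psi_p$ — the same computation). Note that, like the paper's own proof, you tacitly use that each $V_j$ is categorical in $M$ (you call it ``null-homotopic''), a hypothesis the theorem statement omits but which is clearly intended.
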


\begin{theorem}
    Suppose $f \colon M \to M$ is a homeomorphism of a topological space $M$ and $M_f$ is the corresponding mapping torus. 
    If $(M_f)^k$ is completely normal and $\{V_1, \dots, V_n\}$ is an open cover of $M^k$ such that 
    \begin{enumerate}
        \item[(i)] each $V_j$ admits a continuous section of the free path space fibration $e_{k,M}$, and
        \item[(ii)] each $V_j$ is invariant under $f^k := f \times \dots \times f$,
    \end{enumerate}
    then $\TC_k(M_f) \leq k+n$.
\end{theorem}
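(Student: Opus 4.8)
The plan is to present this as an application of \Cref{thm-for-intro: tck-ub-using-new-inva} to the mapping-torus bundle. Writing $M_f=(M\times\R)/\Z$ for the $\Z$-action generated by $(x,t)\mapsto(f(x),t+1)$, the map $p\colon M_f\to S^1=\R/\Z$ induced by $\R\to S^1$ exhibits $M\hookrightarrow M_f\xrightarrow{p}S^1$ as a fibre bundle whose structure group is the cyclic group $\langle f\rangle\subseteq\mathrm{Homeo}(M)$ generated by $f$; hence $(M_f)^k\xrightarrow{p^k}(S^1)^k$ is a fibre bundle with fibre $M^k$, and $(M_f)^k$ is completely normal by hypothesis. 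The rest of the argument supplies the two inputs that \Cref{thm-for-intro: tck-ub-using-new-inva} requires — base motion planners adapted to a $\langle f\rangle$-atlas, and a bound on the fibrewise term — and then reads off $\TC_k(M_f)\le m+(\text{fibre term})-1=k+n$.

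For the base term I would use that $\TC_k(S^1)=k$: there is an open cover $\{U_1,\dots,U_m\}$ of $(S^1)^k$ with sequential motion planners $s_i\colon U_i\to(S^1)^I$. Because $p$ is trivial over every proper closed arc of $S^1$ and because every configuration of $k$ points of $S^1$ lies in a closed arc of length $\le 1-\tfrac1k$, one can moreover choose the cover so that each $s_i$ factors through a contractible closed arc $R_i\subseteq S^1$ (that is, $s_i(U_i)\subseteq R_i^I$), with the induced trivializations $p^{-1}(R_i)\xrightarrow{\sim}R_i\times M$ forming a $\langle f\rangle$-atlas of $p$; a short count of how many such arcs are needed to "catch" every configuration gives $m\le k+1$.

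For the fibre term I would observe that the hypothesis is exactly the assertion that $M^k$ is covered by $n$ open sets $V_j$ that are invariant under the \emph{diagonal} action of $\langle f\rangle$ on $M^k$ (the one generated by $f^k=f\times\cdots\times f$) and over which $e_{k,M}$ admits continuous sections; equivalently, $n$ bounds the fibrewise invariant appearing in \Cref{thm-for-intro: tck-ub-using-new-inva} (the sequential strong equivariant complexity $\TC_{k,\langle f\rangle}^{*}(M)$, up to the usual harmless adjustment). The point to stress here is that the equivariance relevant to lifting paths along a mapping torus is the \emph{diagonal} one, because the monodromy of $M_f\to S^1$ is the single homeomorphism $f$ and the windings of the base paths $s_i$ are pinned down on each $U_i$; this is precisely why only $f^k$-invariance of the $V_j$ is needed, and not invariance under the full componentwise action of $\langle f\rangle^k$ on $M^k$ (which is the condition attached to $\TC_{k,\langle f\rangle}^{\#,*}(M)$, and which would be useless here since that invariant can be infinite for badly behaved $f$). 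Feeding the two inputs into \Cref{thm-for-intro: tck-ub-using-new-inva} — whose proof uses complete normality of $(M_f)^k$ in the Schwarz-type regrouping that merges the $m\cdot(\text{fibre count})$ local motion planners down to $m+(\text{fibre term})-1$ — then yields $\TC_k(M_f)\le(k+1)+n-1=k+n$.

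The main obstacle is the interplay between the last two steps: one has to choose the open cover $\{U_i\}$ of $(S^1)^k$ and the $\langle f\rangle$-atlas $\{R_i\}$ so that, whenever a configuration lies in an overlap $U_i\cap U_{i'}$, the two resulting identifications of its fibre data with a point of $M^k$ differ only by a \emph{diagonal} power of $f$ (so that the $f^k$-invariant cover $\{V_j\}$ absorbs the ambiguity and the regrouping goes through), while simultaneously keeping the number of base charts down to $k+1$. Carrying out this bookkeeping carefully — essentially controlling the winding/lift data chart by chart — is the technical heart of the proof; the identification of the bundle, the value $\TC_k(S^1)=k$, and the invocation of \Cref{thm-for-intro: tck-ub-using-new-inva} are then routine.
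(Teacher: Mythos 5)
Your overall strategy is the same as the paper's: realize $M_f$ as a fibre bundle over $S^1$ with structure group $\langle f\rangle$, take the closed cover $\{R_1,\dots,R_{k+1}\}$ of $S^1$ by long arcs and the associated open cover $\{U_1^k,\dots,U_{k+1}^k\}$ of $(S^1)^k$ with motion planners landing in $R_i^I$ (so $m=k+1$), and feed this together with the cover $\{V_j\}$ of $M^k$ into the fibre-bundle upper bound. But you have explicitly deferred, rather than proved, the one hypothesis that does not come for free: that $(\overline{U_i}\cap\overline{U_{i'}})\times V_j$ is preserved by the transition maps of $\mathfrak{p}^k$ when the $V_j$ are only assumed invariant under the diagonal homeomorphism $f\times\cdots\times f$. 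This is exactly the step you label ``the technical heart,'' and your proposed resolution --- choose the charts so that the two identifications of the fibre differ by a \emph{diagonal} power of $f$ --- is not available for the product charts $U_i^k$ that give you $m=k+1$: the overlap $R_i\cap R_{i'}$ of two long arcs is disconnected, the transition function of the mapping torus is the identity on one component and $f^{\pm 1}$ on the other, so over a configuration $(z_1,\dots,z_k)$ whose coordinates lie in different components the induced transition on the fibre $M^k$ is a non-diagonal tuple $(f^{\epsilon_1},\dots,f^{\epsilon_k})$. A non-product cover of $(S^1)^k$ confining each configuration to a single component would restore diagonality, but you neither construct one nor check that $k+1$ charts still suffice for it.

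The paper's proof takes your route and discharges this step by writing out the trivializations $\phi$ and $\psi_p$ and their transition functions explicitly before appealing to the $f^k$-invariance of the $V_j$; that verification is the substance of the theorem, and a proposal that stops at ``carrying out this bookkeeping carefully is the technical heart'' has not proved the statement. Two smaller corrections: the fibre term is not $\TC^{*}_{k,\langle f\rangle}(M)$ ``up to harmless adjustment'' --- that invariant requires equivariant sections and componentwise invariance and can be infinite here; the hypothesis hands you the integer $n$ directly and it should be used as such. Relatedly, you cannot quote \Cref{thm-for-intro: tck-ub-using-new-inva} (equivalently \Cref{cor:seqtc-ub-using-gcat}), whose fibre input is the $G^k$-invariant quantity $\TC^{\#,*}_{k,G}(F)$; only diagonal invariance is assumed here, so one must argue directly from \Cref{additive inequality for TC for fibre bundles}, as the paper does.
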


\section{Preliminaries}
\label{sec:preliminaries}
In this section, we recall a few definitions and results which help us to compare our notion to the previous ones. 

\begin{definition}[{\cite{Sva,secat}}]
\label{def:secat}
    Suppose $p \colon E \to B$ is a fibration. The \textit{sectional category of $p$}, denoted by $\secat(p)$, is the smallest number $n$ such that there exists an open cover $\{U_i\}^{n}_{i=1}$ of the base $B$ such that over each $U_i$ there exists a continuous section $s_i$ of $p$. If no such $n$ exists, we say $\secat(p) = \infty$.
\end{definition}

If $I$ is an ideal in a commutative ring $R$, then let $\nil(I)$ denote the maximum number of factors in a nonzero product of elements from $I$.

\begin{proposition}[{\cite[Theorem 9.14]{CLOT}}]
    Suppose $p\colon E \to B$ is a fibration. For any commutative ring $R$, we have
    $$
         \nil(\ker p^{*}) + 1 \leq \secat(p),
    $$
    where $p^* \colon H^{*}(B;R) \to H^{*}(E;R)$ is the induced map of cohomology rings.
\end{proposition}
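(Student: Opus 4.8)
The plan is to run the classical lift-to-relative-cohomology argument. We may assume $\secat(p) = n$ is finite, since otherwise the inequality holds trivially. Fix an open cover $\{U_i\}_{i=1}^n$ of $B$ together with sections $s_i \colon U_i \to E$ of $p$, so that $p \circ s_i = \iota_i$, where $\iota_i \colon U_i \hookrightarrow B$ is the inclusion. Let $u_1, \dots, u_n \in \ker p^*$ be arbitrary; it is enough to prove that $u_1 \smile \cdots \smile u_n = 0$ in $H^*(B; R)$, because this shows that every product of $n$ elements of the ideal $\ker p^*$ vanishes, hence $\nil(\ker p^*) \le n - 1$.

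First I would check that each $u_j$ restricts to zero over $U_j$. From $p \circ s_j = \iota_j$ one gets $\iota_j^* = s_j^* \circ p^*$ on cohomology, so $\iota_j^*(u_j) = s_j^*\bigl(p^*(u_j)\bigr) = 0$ in $H^*(U_j; R)$. Exactness of the cohomology sequence of the pair $(B, U_j)$ then produces a relative class $\bar u_j \in H^*(B, U_j; R)$ whose image under the canonical map $H^*(B, U_j; R) \to H^*(B; R)$ is $u_j$.

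Next I would use the relative cup product
$$
H^*(B, U_1; R) \otimes \cdots \otimes H^*(B, U_n; R) \longrightarrow H^*\bigl(B,\, U_1 \cup \cdots \cup U_n;\, R\bigr),
$$
which is natural and compatible with the absolute cup product, so that $\bar u_1 \smile \cdots \smile \bar u_n$ is sent to $u_1 \smile \cdots \smile u_n$ under the homomorphism induced by $H^*\bigl(B, \bigcup_i U_i; R\bigr) \to H^*(B; R)$. Since $\{U_i\}_{i=1}^n$ covers $B$ we have $\bigcup_i U_i = B$, and $H^*(B, B; R) = 0$; therefore $\bar u_1 \smile \cdots \smile \bar u_n = 0$, and pushing forward gives $u_1 \smile \cdots \smile u_n = 0$.

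As $u_1, \dots, u_n \in \ker p^*$ were arbitrary, this yields $\nil(\ker p^*) \le n - 1 = \secat(p) - 1$, i.e.\ $\nil(\ker p^*) + 1 \le \secat(p)$. There is no genuine obstacle here: the only ingredient beyond formal diagram chasing is the construction and naturality of the relative cup product together with its compatibility with the connecting homomorphisms of the pairs involved, which is standard (one may build it on the level of singular cochains), and once that is available the argument is routine.
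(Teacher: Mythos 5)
Your proof is correct and is precisely the standard argument behind the cited result [CLOT, Theorem 9.14], which the paper states without reproving: use the sections to show each class of $\ker p^*$ restricts to zero on the corresponding open set, lift to relative classes, and multiply them into $H^*(B, U_1\cup\dots\cup U_n;R)=H^*(B,B;R)=0$ via the relative cup product. No gaps; nothing to add.
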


A subset $X_0$ of a topological space $X$ is said to be \emph{categorical} if $X_0$ is contractible in $X$, meaning that the inclusion map $X_0 \hookrightarrow X$ is null-homotopic.
The \emph{Lusternik-Schnirelmann category (LS category)} of $X$, denoted by $\ct(X)$, is the smallest number $n$ such that there exists a categorical open cover $\{U_i\}^{n}_{i=1}$ of $X$. If no such $n$ exists, we say $\ct(X) = \infty$.
Equivalently, for a path-connected space $X$, we have
$$\ct(X) := \secat(e_X),$$
where $e_X\colon P_{x_0}X\to X$ is the path space fibration defined by $e_X(\alpha)=\alpha(1)$.
The cohomological lower bound on the LS category is given in the following proposition.

\begin{proposition}[{\cite[Proposition 1.5]{CLOT}}]
\label{prop: cup leq cat}
    Suppose that $X$ is a topological space. If $R$ is a commutative ring, then
    $$
        \mathrm{cup}_{R}(X) +1  \leq \ct(X),
    $$
    where $\mathrm{cup}_R(X)$ denotes $\nil(\tilde{H}^*(X;R))$.
\end{proposition}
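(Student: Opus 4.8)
The plan is to deduce the inequality from the sectional-category bound of the preceding proposition applied to the path space fibration $e_X \colon P_{x_0}X \to X$, together with the identification $\ct(X) = \secat(e_X)$ that holds for path-connected $X$. First I would note that the based path space $P_{x_0}X$ is contractible, so $\tilde H^*(P_{x_0}X;R) = 0$. Hence the induced ring homomorphism $e_X^* \colon H^*(X;R) \to H^*(P_{x_0}X;R)$ is an isomorphism in degree $0$ and is zero in positive degrees, which forces $\ker e_X^*$ to be exactly the augmentation ideal $\tilde H^*(X;R)$. Therefore $\nil(\ker e_X^*) = \nil(\tilde H^*(X;R)) = \mathrm{cup}_R(X)$, and the preceding proposition yields $\mathrm{cup}_R(X) + 1 \leq \secat(e_X) = \ct(X)$.

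For completeness I would also record the standard self-contained argument via relative cup products, which in effect reproves the preceding proposition in this special case. Let $\{U_1, \dots, U_n\}$ be a categorical open cover with $n = \ct(X)$, and fix classes $u_1, \dots, u_n \in \tilde H^*(X;R)$. Since each inclusion $U_j \hookrightarrow X$ is null-homotopic, the restriction $\tilde H^*(X;R) \to \tilde H^*(U_j;R)$ is the zero map, so by exactness of the cohomology sequence of the pair $(X, U_j)$ each $u_j$ lifts to a relative class $\bar u_j \in H^*(X, U_j;R)$. Because the $U_j$ are open, the iterated relative cup product $H^*(X,U_1;R) \otimes \cdots \otimes H^*(X,U_n;R) \to H^*(X, U_1 \cup \cdots \cup U_n;R)$ is defined, and $\bar u_1 \smile \cdots \smile \bar u_n$ lands in $H^*(X, U_1 \cup \cdots \cup U_n;R) = H^*(X,X;R) = 0$. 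Restricting along $H^*(X, \bigcup_j U_j;R) \to H^*(X;R)$ carries this product to $u_1 \smile \cdots \smile u_n$, so the latter vanishes. Thus every $n$-fold product in $\tilde H^*(X;R)$ is zero, that is, $\mathrm{cup}_R(X) \leq n - 1 = \ct(X) - 1$.

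The argument is essentially formal, so there is no real obstacle; the only point requiring a little care is the existence and naturality of the relative cup product $H^*(X,A;R) \otimes H^*(X,B;R) \to H^*(X, A \cup B;R)$ in singular cohomology over an arbitrary coefficient ring, which is where openness of the categorical cover enters (it makes $\{A,B\}$ an excisive cover of $A\cup B$). One also uses path-connectedness of $X$ to invoke $\ct(X) = \secat(e_X)$ in the first approach, and contractibility of $P_{x_0}X$; nothing beyond this is needed.
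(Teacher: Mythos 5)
The paper does not prove this statement; it is quoted verbatim from \cite[Proposition 1.5]{CLOT} as a preliminary, so there is no internal proof to compare against. Both of your arguments are correct and standard for path-connected $X$. The first one is particularly natural here, since the paper states the sectional-category bound $\nil(\ker p^*)+1\le\secat(p)$ immediately beforehand and records $\ct(X)=\secat(e_X)$; specializing to the path fibration and using contractibility of $P_{x_0}X$ to identify $\ker e_X^*$ with $\tilde H^*(X;R)$ is exactly the intended derivation, and it makes the cup-length bound a corollary rather than an independent fact. The second, relative-cup-product argument is the classical self-contained proof and buys you independence from the $\secat$ machinery. One caveat worth making explicit: the proposition as transcribed omits the path-connectedness hypothesis, and both of your arguments genuinely need it. You flag this for the first route (where $\ct(X)=\secat(e_X)$ requires it), but the second route needs it too: for a degree-zero class $u\in\tilde H^0(X;R)$ the vanishing of the \emph{reduced} restriction to $U_j$ does not imply vanishing of the unreduced restriction in $H^0(U_j;R)$, which is what the exact sequence of the pair $(X,U_j)$ requires for the lift $\bar u_j$ to exist. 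Indeed for $X=S^0$ one has $\ct(S^0)=2$ while an idempotent $u\in\tilde H^0(S^0;R)$ satisfies $u^n=u\neq 0$ for all $n$, so the inequality fails outright without path-connectedness. For path-connected $X$ one has $\tilde H^0(X;R)=0$, every class in $\tilde H^*(X;R)$ lives in positive degrees, and both arguments go through as you wrote them.
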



\begin{definition}[\cite{Fadelleqcat}]
\label{def:equi-cat}
    The equivariant \textit{LS category} of a $G$-space $X$, denoted by $\ct_G(X)$, is the smallest number $n$ such that there exists a $G$-invariant open cover $\{U_i\}^{n}_{i=1}$ of $X$ such that each inclusion map $U_i \hookrightarrow X$ is $G$-homotopic to a $G$-map which takes values in a orbit. If no such $n$ exists, we say $\ct_G(X) = \infty$.
\end{definition}

Suppose $X$ is a topological space and $e_{k,X}\colon X^I\to X^k$ is the generalized free path space fibration defined by 
$$
e_{k,X}(\alpha) = \left(\alpha(0), \alpha\left(\frac{1}{k-1}\right), \ldots, \alpha\left(\frac{i}{k-1}\right), \ldots, \alpha\left(\frac{k-2}{k-1}\right), \alpha(1)\right).
$$ 
Then the \emph{$k$-th sequential topological complexity} of $X$, denoted by $\TC_k(X)$, is defined as $$\TC_k(X):=\secat(e_{k,X}).$$



\begin{proposition}[{\cite[Proposition 3.4]{RUD2010}}]
\label{prop: zcl < TC}
    Suppose that $X$ is a path-connected topological space such that $H_i(X)$ is finitely generated for all $i$. 
    Let $\mathbb{K}$ be a field and let $\cup_k$ denote the $k$-fold cup product homomorphism
    $$
        \cup_k \colon H^*(X;\mathbb{K}) \otimes_{\mathbb{K}} \dots \otimes_{\mathbb{K}} H^*(X;\mathbb{K}) \to H^*(X;\mathbb{K}).
    $$
    If $\zl_k(X)$ denotes $\nil(\ker(\cup_k))$, then $\TC_{k}(X) \geq \zl_k(X) + 1$.
\end{proposition}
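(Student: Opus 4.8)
The plan is to realize $\TC_k(X)=\secat(e_{k,X})$ as the sectional category of the iterated diagonal and then combine this with the cohomological lower bound for the sectional category of a fibration recalled above. First I would note that the map $c\colon X\to X^I$ sending $x$ to the constant path at $x$ is a homotopy equivalence (it is a section of evaluation at $0$, which is a deformation retraction) and that $e_{k,X}\circ c=\Delta_k$, where $\Delta_k\colon X\to X^k$ is the $k$-fold diagonal $x\mapsto(x,\dots,x)$. Applying $H^*(-;\mathbb{K})$ and using that $c^*$ is a ring isomorphism, we obtain $\ker(\Delta_k^*)=\ker(e_{k,X}^*)$ as ideals of $H^*(X^k;\mathbb{K})$, and hence $\nil(\ker e_{k,X}^*)=\nil(\ker\Delta_k^*)$.

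Next I would invoke the Künneth theorem. Since $\mathbb{K}$ is a field and the groups $H_i(X)$ are finitely generated, the cohomology cross product is a ring isomorphism $\kappa\colon H^*(X;\mathbb{K})^{\otimes k}\xrightarrow{\ \cong\ }H^*(X^k;\mathbb{K})$, with respect to the graded tensor product (Koszul sign rule). Because the internal cup product is the restriction of the external cross product along the diagonal, we have $\Delta_k^*\circ\kappa=\cup_k$; consequently $\kappa$ carries $\ker(\cup_k)$ isomorphically onto $\ker(\Delta_k^*)$, so that
$$
\zl_k(X)=\nil(\ker\cup_k)=\nil(\ker\Delta_k^*)=\nil(\ker e_{k,X}^*).
$$

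Finally, since $e_{k,X}$ is a (Hurewicz) fibration, the cohomological lower bound for sectional category recalled above, applied with coefficient ring $R=\mathbb{K}$, gives
$$
\zl_k(X)+1=\nil(\ker e_{k,X}^*)+1\le\secat(e_{k,X})=\TC_k(X),
$$
which is the assertion. The one step requiring genuine care is the Künneth identification: one must check that $\kappa$ is multiplicative and that $\Delta_k^*\circ\kappa=\cup_k$ holds on the nose, signs included, and observe that the finite-generation hypothesis on $H_i(X)$ is exactly what makes $H^i(X;\mathbb{K})$ finite-dimensional in each degree, hence $\kappa$ an isomorphism rather than merely a monomorphism; with that isomorphism in place the chain of identities above is formal, and the rest is an application of the sectional-category formalism.
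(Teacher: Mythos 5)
Your proof is correct and is essentially the standard argument: the paper itself does not prove this proposition but cites it to Rudyak, and your chain $\zl_k(X)=\nil(\ker\cup_k)=\nil(\ker\Delta_k^*)=\nil(\ker e_{k,X}^*)$ followed by the sectional-category lower bound $\nil(\ker p^*)+1\le\secat(p)$ is exactly how the cited result is obtained. The points you flag as needing care (multiplicativity of the K\"unneth isomorphism and the finite-type hypothesis making it an isomorphism) are indeed the only places where anything could go wrong, and you handle them correctly.
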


\section{Sectional category with respect to group actions and induced invariants}
\label{sec:secat-related-invariants} 

For a $G$-space $B$ and a fibration $p \colon E\to B$, we introduce the notion of sectional category with respect to $G$.
For specific fibrations and $G$-actions, we obtain the notions of LS category with respect to $G$, sequential topological complexity with respect to $G$, and strong sequential topological complexity with respect to $G$, which will be used to give upper bounds on sequential topological complexity of the total space of a fibre bundle.
Further, we state the relations between these invariants and well-known invariants: (equivariant) sectional category, LS category, (equivariant) topological complexity, and strong equivariant topological complexity.

\subsection{Sectional category with respect to $G$}

In this subsection, we establish some properties of the sectional category with respect to $G$. 
For example, we establish the fibre-homotopy equivalence and the product inequality.

\begin{definition}
\label{def:secatwrtG}
    Suppose that $B$ is a $G$-space and $p \colon E \to B$ is a fibration. The \textit{sectional category of $p$ with respect to $G$}, denoted by $\secat_G^{\#}(p)$, is the smallest number $n$ such that there exists a $G$-invariant open cover $\{U_i\}^{n}_{i=1}$ of the base $B$ such that over each $U_i$ there exists a continuous section $s_i$ of $p$. If no such $n$ exists, we say $\secat_G^{\#}(p) = \infty$.
\end{definition}

Assume that $E$ and $B$ are $G$-spaces and $p \colon E \to B$ is a $G$-fibration. If the sections $s_i$'s in \Cref{def:secatwrtG} are $G$-maps, then the number $n$ is called the equivariant sectional category of $p$, denoted by $\secat_G(p)$, see \cite[Definition 4.1]{colmangranteqtc}.
We will now state the immediate consequence of \Cref{def:secatwrtG}.

\begin{proposition}
\label{prop:secat-wrtG-properties}
    Suppose that $B$ is a $G$-space and $p \colon E \to B$ is a fibration. Then $p$ has a global section if and only if $\secat_G^{\#}(p) = 1$. In general, $\secat(p) \leq \secat_G^{\#}(p)$, and the equality holds if $G$ acts trivially on $B$. Furthermore,
    \begin{enumerate}
        \item If $E$ is also a $G$-space and $p$ is a $G$-fibration, then $\secat_{G}^{\#}(p) \leq \secat_{G}(p)$.

        \item If $H$ is a subgroup of $G$, then $\secat_{H}^{\#}(p) \leq \secat_{G}^{\#}(p)$. In particular, if $B'$ is another $G$-space and $p'\colon E' \to B'$ is a fibration, then
        $$
        \secat_G^{\#}(p \times p') \leq \secat_{G \times G}^{\#}(p \times p'),
        $$
        where $G$ acts on $B \times B'$ diagonally, and $G \times G$ acts on $B \times B'$ componentwise.
   \end{enumerate}
\end{proposition}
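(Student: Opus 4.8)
The plan is to check each assertion directly from Definition \ref{def:secatwrtG}, since the proposition is really a bookkeeping of how the constraint "the cover is $G$-invariant" interacts with weakening or strengthening the hypotheses.

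First I would dispense with the opening statements. If $p$ admits a global section $s$, then $\{B\}$ is a $G$-invariant open cover of $B$ (the whole space is visibly $G$-invariant) over which $s$ is a section, so $\secat_G^{\#}(p)=1$; conversely, a single-element $G$-invariant open cover must be $\{B\}$ itself, and a section over it is a global section. For $\secat(p)\le\secat_G^{\#}(p)$: any $G$-invariant open cover with local sections is in particular an open cover with local sections, so any $n$ realizing $\secat_G^{\#}(p)$ also bounds $\secat(p)$. When $G$ acts trivially on $B$, every subset of $B$ is $G$-invariant, so the two notions of "open cover" coincide and equality holds.

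For part (1), suppose $E$ is a $G$-space and $p$ is a $G$-fibration. Take a $G$-invariant open cover $\{U_i\}_{i=1}^{n}$ realizing $\secat_G(p)$, together with equivariant sections $s_i$. Each $s_i$ is in particular a continuous section of $p$ over $U_i$, so the same cover witnesses $\secat_G^{\#}(p)\le n$. For part (2), if $H\le G$, any $G$-invariant open cover is automatically $H$-invariant, so any data realizing $\secat_G^{\#}(p)$ realizes $\secat_H^{\#}(p)$, giving $\secat_H^{\#}(p)\le\secat_G^{\#}(p)$. The "in particular" statement then follows by applying part (2) to the diagonal copy $G\hookrightarrow G\times G$ acting on $p\times p'\colon E\times E'\to B\times B'$: the diagonal action of $G$ on $B\times B'$ is precisely the restriction of the componentwise $G\times G$-action along $G\hookrightarrow G\times G$, so $\secat_G^{\#}(p\times p')\le\secat_{G\times G}^{\#}(p\times p')$.

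Essentially every step here is a one-line containment argument, so there is no real obstacle; the only point requiring a moment's care is verifying in the last item that the diagonal $G$-action on the product base is indeed the pullback of the componentwise action under the diagonal inclusion, which is immediate from the definitions. I would write the proof in exactly the order the claims are stated so the reader can match each line to the assertion it proves.
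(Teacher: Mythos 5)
Your proof is correct, and it is exactly the routine verification the paper has in mind — the paper states this proposition without proof as an ``immediate consequence'' of Definition \ref{def:secatwrtG}, and each of your one-line containment arguments (including the identification of the diagonal $G$-action with the restriction of the componentwise $G\times G$-action along the diagonal inclusion) is the intended justification.
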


\begin{proposition}
\label{prop:secat-wrtG-ineq}
Consider the following homotopy commutative diagram    
\[
\begin{tikzcd}
E_1 \arrow[r, "\widetilde{f}"] \arrow[d, "p_1"'] & E_2 \arrow[d, "p_2"] \\
B_1 \arrow[r, "f"]                               & B_2\,,                 
\end{tikzcd}
\]
where $p_1$ and $p_2$ are fibrations, and $f$ is a $G$-map between $G$-spaces $B_1$ and $B_2$.
\begin{enumerate}
    \item If $B_1=B_2=B$ and $f$ is the identity map, then $\secat_{G}^{\#}(p_1) \geq \secat_G^{\#}(p_2)$. In particular, the sectional category with respect to a group is a fibre-homotopy equivalence invariant of a fibration.
    
    \item If the diagram is a pullback, then $\secat_{G}^{\#}(p_1) \leq \secat_G^{\#}(p_2)$.
    
    \item If $f$ has a left homotopy inverse $g \colon B_2 \to B_1$ (not necessarily a $G$-map) with a homotopy commutative diagram
\[
\begin{tikzcd}
E_2 \arrow[r, "\widetilde{g}"] \arrow[d, "p_2"'] & E_1 \arrow[d, "p_1"] \\
B_2 \arrow[r, "g"]                               & B_1  ,               
\end{tikzcd}
\]    
then $\secat_{G}^{\#}(p_1) \leq \secat_G^{\#}(p_2)$.
\end{enumerate}
\end{proposition}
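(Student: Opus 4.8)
The plan is to handle the three parts separately. Parts (1) and (3) both rest on the same \emph{rectification} observation: a map which, after composing with a fibration, becomes the inclusion of an open set only up to homotopy can be replaced by a genuine section over that set; this is the standard consequence of the homotopy lifting property. Part (2) is a direct computation with the pullback. In every case the inequality is trivial when the right-hand side is $\infty$, so I assume it equals some finite $n$ and fix a $G$-invariant open cover of the relevant base realizing it, together with the accompanying local sections.

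For part (1), let $\{U_i\}_{i=1}^{n}$ be a $G$-invariant open cover of $B$ with sections $s_i \colon U_i \to E_1$ of $p_1$. Since $f = \mathrm{id}_B$, homotopy commutativity of the square reads $p_2 \circ \widetilde{f} \simeq p_1$, so $p_2 \circ (\widetilde{f} \circ s_i) \simeq p_1 \circ s_i$, which is the inclusion $U_i \hookrightarrow B$. As $p_2$ is a fibration, applying the homotopy lifting property to the map $\widetilde{f} \circ s_i \colon U_i \to E_2$ and this homotopy yields a map $t_i \colon U_i \to E_2$ (homotopic to $\widetilde{f}\circ s_i$) with $p_2 \circ t_i$ equal to the inclusion, i.e.\ an honest section of $p_2$ over $U_i$. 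Hence the same $G$-invariant cover witnesses $\secat_{G}^{\#}(p_2) \le n = \secat_{G}^{\#}(p_1)$. The ``in particular'' follows by applying this inequality in both directions, since a fibre-homotopy equivalence over $B$ provides homotopy-commutative squares with $f = \mathrm{id}_B$ each way, forcing the two invariants to coincide.

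For part (2), realize the pullback as $E_1 = B_1 \times_{B_2} E_2$ with $p_1$ and $\widetilde{f}$ the two projections, and let $\{V_i\}_{i=1}^{n}$ be a $G$-invariant open cover of $B_2$ with sections $\sigma_i$ of $p_2$. Put $U_i := f^{-1}(V_i)$; since $f$ is a $G$-map and each $V_i$ is $G$-invariant, each $U_i$ is $G$-invariant and open, and $\{U_i\}$ covers $B_1$. The assignment $s_i(b) = (b,\sigma_i(f(b)))$ is well defined because $p_2(\sigma_i(f(b))) = f(b)$, so it lands in $E_1$, and $p_1 \circ s_i$ is the inclusion $U_i \hookrightarrow B_1$; thus $s_i$ is a section of $p_1$ over $U_i$, giving $\secat_{G}^{\#}(p_1) \le n$. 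For part (3), again take a $G$-invariant open cover $\{V_i\}_{i=1}^{n}$ of $B_2$ with sections $\sigma_i$ of $p_2$ and set $U_i := f^{-1}(V_i)$, which is $G$-invariant, open, and covers $B_1$ as before. Using $p_1 \circ \widetilde{g} \simeq g \circ p_2$, then $p_2 \circ \sigma_i$ equal to the inclusion, and finally $g \circ f \simeq \mathrm{id}_{B_1}$, one computes that $p_1 \circ (\widetilde{g} \circ \sigma_i \circ f|_{U_i}) \simeq g \circ f|_{U_i}$, which is homotopic to the inclusion $U_i \hookrightarrow B_1$; the homotopy lifting property of $p_1$ then rectifies $\widetilde{g} \circ \sigma_i \circ f|_{U_i}$ to a genuine section of $p_1$ over $U_i$, so $\secat_{G}^{\#}(p_1) \le n$.

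The one point that must be verified in all three parts is that $G$-invariance of the covers is never lost: it is inherited from the $G$-invariant covers of $B_2$ by pulling back along $f$, which is legitimate precisely because $f$ is $G$-equivariant, whereas the local sections — and, in part (3), the homotopy inverse $g$ — are allowed to be non-equivariant, which is exactly why the construction produces the ``$\#$''-invariant rather than the genuine equivariant sectional category. I expect no serious obstacle here; the only nontrivial ingredient is the rectification step via the homotopy lifting property, and that is entirely routine.
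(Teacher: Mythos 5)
Your proposal is correct and follows essentially the same route as the paper: pull back the $G$-invariant cover along the $G$-map $f$ where needed, use the universal property (or explicit model) of the pullback for (2), and rectify homotopy sections to genuine sections via the homotopy lifting property for (1) and (3). The paper only writes out part (3) in detail, leaving (1) and (2) to the reader, and your arguments for those parts match the intended ones.
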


\begin{proof}
    The proofs of (1) and (2) are left to the reader. Now we show (3). Suppose that $U$ is a  $G$-invariant open subset of $B_2$ with a section $s_2\colon U \to E_2$ of $p_2$. Let $V=f^{-1}(U)$ and $s_1=\widetilde{g} \circ s_2 \circ f \circ i_V$, where $i_V \colon V\hookrightarrow B_1$ is the inclusion map. Then $V$ is a $G$-invariant open subset of $B_1$ and $s_1$ satisfies 
    $$
    p_1 \circ s_1 
        = p_1 \circ \widetilde{g} \circ s_2 \circ f\ \circ i_V
        \simeq g \circ p_2 \circ s_2 \circ f \circ i_V
        = g \circ f \circ i_V \simeq i_V.
    $$
    As $p_1$ is a fibration and $s_1$ is a homotopy section of $p_1$, we can get a section of $p_1$ over $V$.
\end{proof}

\begin{definition}
    A $G$-space $X$ is said to be $G$-completely normal if for every two $G$-invariant subsets $A$ and $B$ of $X$ satisfying $\overline{A} \cap B = A \cap \overline{B} = \emptyset$, there exist disjoint $G$-invariant open subsets $U$ and $V$ containing $A$ and $B$, respectively.
\end{definition}

\begin{proposition}
\label{prop:new-Gsecat-prod-ineq}
    Suppose that $B_i$ is a $G_i$-space and $p_i \colon E_i \to B_i$ is a fibration for $i=1,2$. If $B_1 \times B_2$ is $(G_1 \times G_2)$-completely normal, then
    $$
        \secat_{G_1\times G_2}^{\#}(p_1\times p_2) \leq \secat_{G_1}^{\#}(p_1) + \secat_{G_2}^{\#}(p_2) - 1,
    $$
    where $G_1 \times G_2$ acts on $B_1 \times B_2$ componentwise.
\end{proposition}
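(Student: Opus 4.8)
The plan is to run, $(G_1\times G_2)$-equivariantly, the classical proof that sectional category is subadditive on products, $\secat(p_1\times p_2)\le\secat(p_1)+\secat(p_2)-1$. The hypothesis that $B_1\times B_2$ is $(G_1\times G_2)$-completely normal is exactly what makes the required shrinkings and separations stay inside the category of invariant sets, since complete normality of the product lets us separate any two ``separated'' $(G_1\times G_2)$-invariant subsets. We may assume $m:=\secat_{G_1}^{\#}(p_1)$ and $n:=\secat_{G_2}^{\#}(p_2)$ are finite. First I would fix a $G_1$-invariant open cover $\{U_1,\dots,U_m\}$ of $B_1$ with sections $\sigma_i$ of $p_1$ over $U_i$, and a $G_2$-invariant open cover $\{V_1,\dots,V_n\}$ of $B_2$ with sections $\tau_j$ of $p_2$ over $V_j$. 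Because a product of a $G_1$-invariant subset with a $G_2$-invariant subset is $(G_1\times G_2)$-invariant, the family $\{U_i\times V_j\}$ is a $(G_1\times G_2)$-invariant open cover of $B_1\times B_2$, and $\sigma_i\times\tau_j$ is a section of $p_1\times p_2$ over $U_i\times V_j$. The task becomes to merge these $mn$ sets into $m+n-1$ invariant open sets, each still carrying a section.

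Next I would index the pieces by the poset $\{1,\dots,m\}\times\{1,\dots,n\}$ with the product order and group them by the value $\ell=i+j$. The diagonals $\Delta_\ell=\{(i,j):i+j=\ell\}$, for $\ell=2,3,\dots,m+n$, partition the index set into $m+n-1$ antichains: $i+j=i'+j'$ together with $i\le i'$ and $j\le j'$ forces $(i,j)=(i',j')$. Hence it suffices to produce, for each $\ell$, a $(G_1\times G_2)$-invariant open set $Z_\ell$ admitting a section of $p_1\times p_2$, with $\bigcup_{\ell=2}^{m+n}Z_\ell=B_1\times B_2$; the family $\{Z_2,\dots,Z_{m+n}\}$ then witnesses $\secat_{G_1\times G_2}^{\#}(p_1\times p_2)\le m+n-1=\secat_{G_1}^{\#}(p_1)+\secat_{G_2}^{\#}(p_2)-1$.

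The construction of the $Z_\ell$ is the heart of the matter and is where complete normality is used. Since $(G_1\times G_2)$-complete normality implies $(G_1\times G_2)$-normality, the equivariant shrinking lemma holds for finite $(G_1\times G_2)$-invariant open covers; iterating it gives nested $(G_1\times G_2)$-invariant open refinements $\{W_{ij}^{(k)}\}_{i,j}$ of $\{U_i\times V_j\}$ with $W_{ij}^{(0)}=U_i\times V_j$, $\overline{W_{ij}^{(k)}}\subseteq W_{ij}^{(k-1)}$, and each still a cover, for $k=0,1,\dots,\min(m,n)$. Within a fixed diagonal $\Delta_\ell=\{(i_1,j_1),\dots,(i_t,j_t)\}$, ordered by $i_1<\dots<i_t$ (hence $j_1>\dots>j_t$), I would carve out of the $W_{i_s j_s}^{(\cdot)}$ pairwise disjoint $(G_1\times G_2)$-invariant open sets $D_\ell^{(s)}\subseteq U_{i_s}\times V_{j_s}$ by a telescoping ``delete the closures of the other pieces'' recipe, arranged so that any point lying in several pieces of one diagonal is pushed to a different diagonal; this uses only that the refinements are covers, as one already sees when $t=2$: if $\{A_1',A_2'\}$ covers a space with $\overline{A_i'}\subseteq A_i$, then $(A_1\setminus\overline{A_2'})\cap(A_2\setminus\overline{A_1'})=\emptyset$. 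Setting $Z_\ell:=\bigsqcup_s D_\ell^{(s)}$, the $D_\ell^{(s)}$ are pairwise disjoint open sets with union $Z_\ell$, so the maps $\sigma_{i_s}\times\tau_{j_s}$ on $D_\ell^{(s)}$ glue to a continuous section of $p_1\times p_2$ over $Z_\ell$; and $Z_\ell$ is $(G_1\times G_2)$-invariant because complements, finite unions, and closures of invariant sets are invariant.

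The hard part is the third paragraph --- choosing the iterated shrinkings and arranging the telescoping complements so that, for every diagonal, the carved-out pieces are genuinely pairwise disjoint while their union over all diagonals still covers $B_1\times B_2$ (the classical non-equivariant combinatorics of the product inequality). Once that is set up, checking that everything produced remains $(G_1\times G_2)$-invariant is immediate, and the rest is formal.
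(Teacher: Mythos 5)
Your overall skeleton matches the paper's: group the products $U_i\times V_j$ along the antidiagonals $i+j=\ell$, produce for each $\ell$ a disjoint union of $(G_1\times G_2)$-invariant open pieces, each sitting inside some $U_i\times V_j$ (so that the sections $\sigma_i\times\tau_j$ glue), and check that the $m+n-1$ resulting sets cover. The formal bookkeeping in your first, second and fourth paragraphs is correct. The problem is that your third paragraph --- which you yourself identify as ``the heart of the matter'' and again, at the end, as ``the hard part'' --- is not actually carried out. You never specify the telescoping recipe, never prove that the pieces $D_\ell^{(s)}$ within a diagonal are pairwise disjoint, and never prove that the points discarded from one diagonal are recovered on another so that $\bigcup_\ell Z_\ell=B_1\times B_2$. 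Since this is exactly where all of the content of the proposition lives, the proposal has a genuine gap rather than a routine omission.

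There is also a structural warning sign: your sketch only invokes the equivariant shrinking lemma, i.e.\ $(G_1\times G_2)$-\emph{normality}, whereas the hypothesis of the proposition is $(G_1\times G_2)$-\emph{complete} normality, and the paper uses the stronger property essentially. Its construction filters $B_1$ by $X_i=U_1\cup\dots\cup U_i$ and $B_2$ by $Y_j=V_1\cup\dots\cup V_j$, sets $Z_{i,l}=(X_i\setminus X_{i-1})\times(Y_{l-i+1}\setminus Y_{l-i})$, and observes that for fixed $l$ these blocks are pairwise \emph{separated} (one shows $Z_{i,l}\cap\overline{Z_{i',l}}=\emptyset$ using that the $X_i$ and $Y_j$ are open), while $\bigcup_l\bigcup_i Z_{i,l}=B_1\times B_2$ holds automatically because each point has a least $i$ with $x\in X_i$ and a least $j$ with $y\in Y_j$. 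These blocks are differences of open sets, hence neither open nor closed, so thickening them to pairwise disjoint invariant \emph{open} sets $W_{i,l}\supseteq Z_{i,l}$ requires separating separated sets --- that is, complete normality; intersecting $W_{i,l}$ with $U_i\times V_{l-i+1}$ then yields the desired pieces carrying $\sigma_i\times\tau_{l-i+1}$. To complete your own route you would either have to reproduce this filtration argument (in which case the iterated shrinkings are unnecessary) or supply the full combinatorics of nested shrinkings together with a verification of both disjointness and covering; as written, neither is on the page.
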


\begin{proof}
Suppose that $\secat_{G_1}^{\#}(p_1)=m$ and $\secat_{G_2}^{\#}(p_2)=n$. Suppose that $\{U_i\}_{i=1}^{m}$ is a $G_1$-invariant open cover of $B_1$ with sections $s_i \colon U_i \to E_1$ of $p_1$ and $\{V_j\}_{j=1}^{n}$ is a $G_2$-invariant open cover of $B_2$ with sections $t_j \colon V_j \to E_2$ of $p_2$. Let $\{X_i\}_{i=0}^{m+n-1}$ be the sequence given by $X_0 = \emptyset$, $X_i =U_1 \cup \dots \cup U_i$ for $1\leq i \leq m$ and $X_i=B_1$ for $m \leq i \leq m+n-1$. Similarly, let $\{Y_j\}_{j=0}^{m+n-1}$ be the sequence given by $Y_0 = \emptyset$, $Y_j = V_1 \cup \dots \cup V_j$ for $1\leq j \leq n$ and $Y_j=B_2$ for $n \leq j \leq m+n-1$.

For each $l \in \{1,2, \dots, m+n-1\}$, define $Z_{i,l}:= (X_i\setminus X_{i-1}) \times (Y_{l-i+1} \setminus Y_{l-i})$ for $1 \leq i \leq m+n-1$. Note that $Z_{i,l}$ are $(G_1 \times G_2)$-invariant and $Z_{i,l}=\emptyset$ for $m+1 \leq i \leq m+n-1$. Define
    $$
        Q_l:=\bigcup_{i=1}^{l} Z_{i,l}
    $$
for $1\leq l \leq m+n-1$. Using similar arguments as in the proof of \cite[Theorem 1.37]{CLOT}, one can show that 
    $
        Z_{i,l} \cap \overline{Z_{i',l}}  = \emptyset
    $
for $i\neq i'$.
Then, by using the property of $(G_1 \times G_2)$-complete normality for the space $B_1 \times B_2$ repeatedly, there exist $(G_1 \times G_2)$-invariant open sets $W_{i,l}$ containing $Z_{i,l}$ such that ${W_{i,l} \cap W_{i',l} = \emptyset}$ for $i \neq i'$.  
Let
$$
C_{i,l} := W_{i,l} \cap (U_i \times V_{l-i+1}).
$$
It is clear that $C_{i,l}$ are $(G_1 \times G_2)$-invariant and $Z_{i,l} \subseteq C_{i,l} \subseteq U_i \times V_{l-i+1}$ and $C_{i,l} \cap C_{i',l} = \emptyset$ for $i \neq i'$. Let 
$$
    C_l:=\coprod_{i=1}^{l} C_{i,l}.
$$
for $1\leq l \leq m+n-1$. Note that $C_l$ are $(G_1 \times G_2)$-invariant and $Q_l \subseteq C_l$ . The composition
\[
\begin{tikzcd}
    C_{i,l} \arrow[r, hook]  &  U_i \times V_{{l-i+1}} \arrow[r,"s_i \times t_{l-i+1}"] & E_1 \times E_2 ,
\end{tikzcd} 
\]
gives a section of $p_1 \times p_2$ on $C_{i,l}$ for each $1 \leq i \leq m$ and $1 \leq l \leq m+n-1$. As $C_l$ is a disjoint union of open sets $C_{i,l}$, we get a section of $p_1 \times p_2$ on $C_l$ for each $1\leq l \leq m+n-1$. Furthermore, one can check that $\bigcup_{l=1}^{m+n-1} Q_l = B_1 \times B_2$.
Hence, $B_1 \times B_2 = \bigcup_{l=1}^{m+n-1} Q_l \subseteq \bigcup_{l=1}^{m+n-1} C_l$ implies $\{C_l\}_{l=1}^{m+n-1}$ is an $(G_1 \times G_2)$-invariant open cover of $B_1 \times B_2$ with sections of $p_1 \times p_2$ on each $C_l$. Hence, $\secat_{G_1\times G_2}^{\#}(p_1 \times p_2) \leq m+n-1$.
\end{proof}

\begin{corollary}
    Suppose that $B$ is a $G$-space and $p_i\colon E_i \to B$ is a fibration for $i=1,2$. Let $E_1 \times_B E_2 = \{(e_1,e_2) \in E_1 \times E_2 \mid p_1(e_1) = p_2(e_2)\}$ and let $p\colon  E_1 \times_B E_2 \to B$ be the fibration given by $p(e_1,e_2)=p_1(e_1)=p_2(e_2)$. If $B \times B$ is $(G \times G)$-completely normal, then
    $$
        \secat_{G}^{\#}(p) \leq \secat_{G}^{\#}(p_1) + \secat_{G}^{\#}(p_2) - 1.
    $$
\end{corollary}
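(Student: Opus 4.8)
The plan is to deduce this corollary from \Cref{prop:new-Gsecat-prod-ineq} by realizing the fibred product fibration $p \colon E_1 \times_B E_2 \to B$ as a pullback of the product fibration $p_1 \times p_2 \colon E_1 \times E_2 \to B \times B$ along the diagonal map. First I would set up the diagonal $G$-map $\Delta \colon B \to B \times B$, $\Delta(b) = (b,b)$, where $B \times B$ carries the diagonal $G$-action on the target of $\secat_G^{\#}$ (note $\Delta$ is indeed $G$-equivariant for the diagonal action on $B \times B$). Then I would observe that the square
\[
\begin{tikzcd}
E_1 \times_B E_2 \arrow[r] \arrow[d, "p"'] & E_1 \times E_2 \arrow[d, "p_1 \times p_2"] \\
B \arrow[r, "\Delta"] & B \times B
\end{tikzcd}
\]
is a pullback, since $\{(b,e_1,e_2) : b = p_1(e_1) = p_2(e_2)\} \cong E_1 \times_B E_2$. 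By part (2) of \Cref{prop:secat-wrtG-ineq}, applied with $G$ acting diagonally on $B \times B$, this gives $\secat_G^{\#}(p) \leq \secat_G^{\#}(p_1 \times p_2)$, where on the right $G$ acts \emph{diagonally} on $B \times B$.

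Next I would bridge from the diagonal $G$-action to the componentwise $G \times G$-action: by the "In particular" clause of part (2) of \Cref{prop:secat-wrtG-properties} (taking $B' = B$, $p' = p_2$, $p = p_1$), we have
$$
\secat_G^{\#}(p_1 \times p_2) \leq \secat_{G \times G}^{\#}(p_1 \times p_2),
$$
where the left side uses the diagonal $G$-action and the right side the componentwise $G \times G$-action. Finally, \Cref{prop:new-Gsecat-prod-ineq} with $G_1 = G_2 = G$, $B_1 = B_2 = B$ gives $\secat_{G \times G}^{\#}(p_1 \times p_2) \leq \secat_G^{\#}(p_1) + \secat_G^{\#}(p_2) - 1$, using the hypothesis that $B \times B$ is $(G \times G)$-completely normal. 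Chaining the three inequalities yields the claim.

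The only subtle point — and the one I would be most careful to state correctly rather than a genuine obstacle — is the bookkeeping of which action on $B \times B$ is in force at each step: the pullback step naturally produces the diagonal action, while the product inequality is stated for the componentwise action, so the comparison from \Cref{prop:secat-wrtG-properties}(2) is exactly what glues the two halves together. Everything else is formal, so I would keep the proof to these three citations and the pullback observation, leaving no routine verification to grind through.
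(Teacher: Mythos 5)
Your proposal is correct and follows essentially the same route as the paper: realize $p$ as the pullback of $p_1 \times p_2$ along the diagonal, pass from the diagonal $G$-action to the componentwise $G\times G$-action via the subgroup inequality, and finish with the product inequality for $\secat^{\#}_{G_1\times G_2}$. Your explicit care about which action on $B\times B$ is in force at each step is a welcome clarification of the bookkeeping the paper leaves implicit.
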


\begin{proof}
Observe that 
\[
\begin{tikzcd}
E_1 \times_B E_2 \arrow[r, hook] \arrow[d, "p"'] & E_1 \times E_2 \arrow[d, "p_1 \times p_2"] \\
B \arrow[r, "\Delta"]                               & B \times B                 
\end{tikzcd}
\]
is a pullback diagram, where $\Delta$ is the diagonal map. Hence,
$$
    \secat_{G}^{\#}(p) 
        \leq \secat_G^{\#}(p_1 \times p_2)
        \leq \secat_{G \times G}^{\#}(p_1 \times p_2) 
        \leq \secat_{G}^{\#}(p_1) + \secat_{G}^{\#}(p_2) - 1
$$
by \Cref{prop:secat-wrtG-ineq} (2), \Cref{prop:secat-wrtG-properties} (3) and \Cref{prop:new-Gsecat-prod-ineq}.
\end{proof}

Although we believe the following result is already known, we could not find a source for it. Therefore, we include it here for the sake of completeness.

\begin{proposition}
\label{prop:eqsecat-prod-ineq}
    Suppose $p_i \colon E_i \to B_i$ is a $G_i$-fibration for $i=1,2$. If $B_1 \times B_2$ is $(G_1 \times G_2)$-completely normal, then
    $$
        \secat_{G_1\times G_2}(p_1\times p_2) \leq \secat_{G_1}(p_1) + \secat_{G_2}(p_2) - 1,
    $$
    where $G_1 \times G_2$ acts on $X \times Y$ componentwise.
\end{proposition}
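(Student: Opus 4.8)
The plan is to run the argument of \Cref{prop:new-Gsecat-prod-ineq} essentially verbatim, the only new point being to track equivariance of all the open sets and sections produced along the way. First I would note that if $p_i\colon E_i\to B_i$ is a $G_i$-fibration for $i=1,2$, then $p_1\times p_2\colon E_1\times E_2\to B_1\times B_2$ is a $(G_1\times G_2)$-fibration for the componentwise action, since a product of fibrations is a fibration and the equivariant homotopy lifting property for the factors assembles into the equivariant homotopy lifting property for the product. Set $\secat_{G_1}(p_1)=m$ and $\secat_{G_2}(p_2)=n$, and choose a $G_1$-invariant open cover $\{U_i\}_{i=1}^m$ of $B_1$ with $G_1$-equivariant sections $s_i\colon U_i\to E_1$ of $p_1$, and a $G_2$-invariant open cover $\{V_j\}_{j=1}^n$ of $B_2$ with $G_2$-equivariant sections $t_j\colon V_j\to E_2$ of $p_2$. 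Then each $U_i\times V_j$ is $(G_1\times G_2)$-invariant and $s_i\times t_j\colon U_i\times V_j\to E_1\times E_2$ is a $(G_1\times G_2)$-equivariant section of $p_1\times p_2$.

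Next I would repeat the combinatorial construction: define $X_0=\emptyset$, $X_i=U_1\cup\dots\cup U_i$ for $1\le i\le m$, $X_i=B_1$ for $m\le i\le m+n-1$, and analogously $Y_0=\emptyset$, $Y_j=V_1\cup\dots\cup V_j$ for $1\le j\le n$, $Y_j=B_2$ for $n\le j\le m+n-1$. All of these are invariant under the respective group actions, hence so are the sets $Z_{i,l}=(X_i\setminus X_{i-1})\times(Y_{l-i+1}\setminus Y_{l-i})$ and $Q_l=\bigcup_{i=1}^l Z_{i,l}$ under $G_1\times G_2$. As in \Cref{prop:new-Gsecat-prod-ineq} (following \cite[Theorem 1.37]{CLOT}) one has $Z_{i,l}\cap\overline{Z_{i',l}}=\emptyset$ for $i\ne i'$. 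Since the $Z_{i,l}$ are $(G_1\times G_2)$-invariant and $B_1\times B_2$ is $(G_1\times G_2)$-completely normal, repeated application of complete normality yields $(G_1\times G_2)$-invariant open sets $W_{i,l}\supseteq Z_{i,l}$ which are pairwise disjoint for fixed $l$. Put $C_{i,l}=W_{i,l}\cap(U_i\times V_{l-i+1})$ and $C_l=\coprod_{i=1}^l C_{i,l}$; these are $(G_1\times G_2)$-invariant open sets, pairwise disjoint in $l$, with $Q_l\subseteq C_l$, and $\bigcup_{l=1}^{m+n-1}Q_l=B_1\times B_2$, hence $\{C_l\}_{l=1}^{m+n-1}$ is a $(G_1\times G_2)$-invariant open cover of $B_1\times B_2$.

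Finally I would produce the equivariant sections: restricting the $(G_1\times G_2)$-equivariant section $s_i\times t_{l-i+1}$ of $p_1\times p_2$ to the invariant subset $C_{i,l}\subseteq U_i\times V_{l-i+1}$ gives a $(G_1\times G_2)$-equivariant section over $C_{i,l}$ — equivariance is automatically preserved under restriction to an invariant subset. Since $C_l$ is a disjoint union of the open invariant sets $C_{i,l}$, these piece together to a $(G_1\times G_2)$-equivariant section of $p_1\times p_2$ over $C_l$. Therefore $\secat_{G_1\times G_2}(p_1\times p_2)\le m+n-1$, as claimed. I do not expect a genuine obstacle here; the one place requiring care is the separation step, where one must invoke $(G_1\times G_2)$-complete normality — rather than ordinary complete normality — precisely to guarantee that the separating open sets $W_{i,l}$ can be chosen $(G_1\times G_2)$-invariant, so that equivariance of the final sections is not lost.
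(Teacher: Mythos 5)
Your proposal is correct and follows the paper's own proof exactly: the paper likewise just reruns the construction of \Cref{prop:new-Gsecat-prod-ineq} and observes that when the $s_i$ and $t_j$ are equivariant, the sections $s_i\times t_{l-i+1}$ restricted to the invariant sets $C_{i,l}$ are $(G_1\times G_2)$-equivariant. The only points you add — that $p_1\times p_2$ is a $(G_1\times G_2)$-fibration and that the $W_{i,l}$ must be chosen invariant via $(G_1\times G_2)$-complete normality — are correct and consistent with the paper's argument.
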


\begin{proof}
    We follow the same notation as \Cref{prop:new-Gsecat-prod-ineq}, since the proof is similar. If the sections $s_i \colon U_i \to E_1$ of $p_1$ and the sections $t_j \colon V_j \to E_2$ of $p_2$ are $G_1$-equivariant and $G_2$-equivariant, respectively, then the sections of $p_1 \times p_2$ defined using the compositions
    \[
    \begin{tikzcd}
    C_{i,l} \arrow[r, hook]  &  U_i \times V_{{l-i+1}} \arrow[r,"s_i \times t_{l-i+1}"] & E_1 \times E_2 ,
    \end{tikzcd} 
    \]
    are $(G_1 \times G_2)$-equivariant. Thus, the result follows.
\end{proof}

\subsection{LS category with respect to $G$}

\begin{definition}
\label{def:catwrtG}
    Suppose that $X$ is a $G$-space. The \textit{Lusternik-Schnirelmann category (LS category) of $X$ with respect to $G$}, denoted by $\ct_G^{\#}(X)$, is the smallest number $n$ such that there exists a $G$-invariant categorical open cover $\{U_i\}^{n}_{i=1}$ of $X$. If no such $n$ exists, we say $\ct_G^{\#}(X) = \infty$.
\end{definition}


\begin{proposition}
\label{prop:cat-wrtG-properties}
Suppose that $X$ is a $G$-space. Then $X$ is contractible if and only if $\ct_G^{\#}(X) = \ct(X)= 1$. In general,  $\ct(X) \leq \ct_G^{\#}(X)$, and the equality holds if  $G$ acts trivially on $X$. If $H$ is a subgroup of $G$, then $\ct_{H}^{\#}(X) \leq \ct_G^{\#}(X)$. 
In particular, if $Y$ is another $G$-space, then 
    $$
        \ct_G^{\#}(X\times Y) \leq \ct_{G\times G}^{\#}(X\times Y)
    $$ 
where $G$ acts on $X \times Y$ diagonally, and $G \times G$ acts on $X \times Y$ componentwise.
\end{proposition}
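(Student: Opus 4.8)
The plan is to verify each assertion in turn, mostly by unwinding the definition of $\ct_G^{\#}$ and invoking the corresponding statements from \Cref{prop:secat-wrtG-properties} applied to the path space fibration $e_X\colon P_{x_0}X \to X$, after observing that $\ct_G^{\#}(X) = \secat_G^{\#}(e_X)$ whenever $X$ is path-connected (and that for general $X$ one argues directly with categorical covers). First I would dispatch the characterization of contractibility: if $\ct_G^{\#}(X)=1$ then the single cover element must be all of $X$ and $X$ itself is categorical, i.e. contractible, and $\ct(X)=1$ as well; conversely, if $X$ is contractible then $\{X\}$ is a $G$-invariant categorical cover, so $\ct_G^{\#}(X)=\ct(X)=1$. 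Note the $G$-invariance of the single element $X$ is automatic, so no equivariance hypothesis is needed here.

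Next I would prove $\ct(X)\le \ct_G^{\#}(X)$: any $G$-invariant categorical open cover is in particular a categorical open cover, so the minimum over the smaller (more constrained) family of covers is at least the unconstrained minimum. For the equality when $G$ acts trivially, observe that when the action is trivial every open set is $G$-invariant, so the two classes of covers coincide. For the monotonicity in the group, if $H\le G$ then every $G$-invariant set is $H$-invariant, hence a $G$-invariant categorical cover is an $H$-invariant categorical cover, giving $\ct_H^{\#}(X)\le \ct_G^{\#}(X)$; one should remark that categoricity of a subset does not depend on the group action at all, only on the inclusion into $X$, so there is nothing to check on that side.

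Finally, for the product inequality $\ct_G^{\#}(X\times Y)\le \ct_{G\times G}^{\#}(X\times Y)$ with the diagonal versus componentwise actions, I would note that $X\times Y$ with the diagonal $G$-action is obtained from $X\times Y$ with the componentwise $G\times G$-action by restricting along the diagonal embedding $G\hookrightarrow G\times G$; since $G$ is (isomorphic to) a subgroup of $G\times G$ and any set invariant under the componentwise $G\times G$-action is invariant under the diagonal $G$-action, a $(G\times G)$-invariant categorical cover of $X\times Y$ is also a diagonally-$G$-invariant categorical cover, and the inequality follows exactly as in the subgroup case (this mirrors the "in particular" clause of \Cref{prop:secat-wrtG-properties}(2)). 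I do not anticipate a genuine obstacle here; the only mildly delicate point is to be careful that, for non-path-connected $X$, one should argue throughout with categorical open covers directly rather than through $\secat_G^{\#}(e_X)$, but the set-theoretic reasoning above is insensitive to that and goes through verbatim.
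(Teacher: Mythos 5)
Your proposal is correct, and it follows exactly the route the paper intends: the paper states this proposition without proof as an immediate consequence of the definitions, and your definitional unwinding (single-set cover for the contractibility characterization, restriction of invariance for the monotonicity statements, and the diagonal subgroup $G\hookrightarrow G\times G$ for the product clause) is precisely the expected argument. Your remark that one should argue directly with categorical covers rather than via $\secat_G^{\#}(e_X)$ for possibly non-path-connected $X$ is a sensible precaution and does not affect the validity of the proof.
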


The following theorem proves the $G$-homotopy invariance of $\ct_G^{\#}(X)$.

\begin{theorem}
\label{thm:invariance-cat-wrt-G}
    Suppose that $X$ and $Y$ are $G$-spaces. If there exists a $G$-map $g \colon Y \to X$ with a left homotopy inverse, then $\ct_G^{\#}(X) \geq \ct_G^{\#}(Y)$. In particular, the LS category with respect to $G$ of a topological $G$-space is a $G$-homotopy invariant.
\end{theorem}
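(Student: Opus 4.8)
The plan is to mimic the standard argument that ordinary LS category is a homotopy invariant (e.g.\ \cite[Theorem 1.30]{CLOT}), upgrading every step to respect the $G$-action. Let $g \colon Y \to X$ be a $G$-map with a (not necessarily equivariant) left homotopy inverse $h \colon X \to Y$, so $h \circ g \simeq \mathrm{id}_Y$. I would first show that if $U \subseteq X$ is a $G$-invariant categorical open set, then $g^{-1}(U) \subseteq Y$ is a $G$-invariant open set which is also categorical. Invariance and openness of $g^{-1}(U)$ are immediate since $g$ is a continuous $G$-map. For categoricity: the inclusion $U \hookrightarrow X$ is null-homotopic, hence so is the composite $g^{-1}(U) \xrightarrow{g} U \hookrightarrow X$; precomposing a null-homotopy of $U \hookrightarrow X$ with $g|_{g^{-1}(U)}$ shows $g^{-1}(U) \hookrightarrow X$ is null-homotopic. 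Then, applying $h$, the composite $g^{-1}(U) \hookrightarrow X \xrightarrow{h} Y$ is null-homotopic as well; but this composite is $g^{-1}(U) \xrightarrow{h \circ g} Y$, which is homotopic to the inclusion $g^{-1}(U) \hookrightarrow Y$ because $h \circ g \simeq \mathrm{id}_Y$. Hence the inclusion $g^{-1}(U) \hookrightarrow Y$ is null-homotopic, i.e.\ $g^{-1}(U)$ is categorical in $Y$.

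Given this, suppose $\ct_G^{\#}(X) = n$ with a $G$-invariant categorical open cover $\{U_i\}_{i=1}^n$ of $X$. Then $\{g^{-1}(U_i)\}_{i=1}^n$ is a $G$-invariant open cover of $Y$ (it covers $Y$ since $\{U_i\}$ covers $X$), and by the previous paragraph each $g^{-1}(U_i)$ is categorical in $Y$. Therefore $\ct_G^{\#}(Y) \leq n = \ct_G^{\#}(X)$, which is the desired inequality; if $\ct_G^{\#}(X) = \infty$ there is nothing to prove.

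For the final sentence, a $G$-homotopy equivalence $g \colon Y \to X$ in particular has a $G$-homotopy inverse, which is a fortiori an ordinary left homotopy inverse, so the first part applies to give $\ct_G^{\#}(X) \geq \ct_G^{\#}(Y)$; running the argument with the roles of $X$ and $Y$ interchanged (using the $G$-homotopy inverse, which is also a $G$-map with a left homotopy inverse) gives the reverse inequality, hence $\ct_G^{\#}(X) = \ct_G^{\#}(Y)$.

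The only subtlety I anticipate is bookkeeping with the non-equivariant homotopies: the null-homotopies of inclusions and the homotopy $h \circ g \simeq \mathrm{id}_Y$ need not be $G$-equivariant, and that is fine because the definition of a categorical open set (\Cref{def:catwrtG}) only requires the inclusion to be null-homotopic as an ordinary map, not $G$-equivariantly. One just has to be careful that we never need to restrict a homotopy to $g^{-1}(U)$ and check equivariance of that restriction — we don't, since categoricity is a non-equivariant condition. So there is no real obstacle; the content is entirely in observing that the preimage construction preserves $G$-invariance while the categoricity transfers by the usual non-equivariant reasoning.
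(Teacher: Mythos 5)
Your proposal is correct and is essentially the paper's own argument: the paper simply cites the standard non-equivariant proof (\cite[Lemma 1.29]{CLOT}) and notes, exactly as you do, that $g^{-1}(U_i)$ remains $G$-invariant because $g$ is a $G$-map, while categoricity transfers by the usual non-equivariant reasoning with the left homotopy inverse. You have just written out the details that the paper leaves to the reference.
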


\begin{proof}
     The proof is similar to \cite[Lemma 1.29]{CLOT}. 
     It just uses the fact that if $U_i$ is $G$-invariant subset of $X$ and $V_i=g^{-1}(U_i)$, then $V_i$ is also $G$-invariant since $g$ is a $G$-map.
\end{proof}

\begin{example}
\label{example:new-Gcat(S^n)}
Let $G$ be a finite group acting on $S^n$, where $n \geq 1$. If $p \in S^n$ and $q \in S^n \setminus G p$, where $G p := \{gp \mid g \in G\}$, then $U_1 = S^n \setminus Gp$ and $U_2 = S^n \setminus Gq$ form a $G$-invariant categorical open cover of $S^n$, i.e., $\ct_{G}^{\#} \left(S^{n}\right) \leq 2$. Hence, $\ct_{G}^{\#} \left(S^{n}\right) = 2$ since $S^{n}$ is not contractible.
\end{example}

\begin{example}\label{example:new-Gcat(S^n)2}
    If $\tau_1$ is the antipodal involution on $S^n$ given by $\tau_1(x)=-x$, then the quotient space $S^n/\left< \tau_1 \right>$ is the real projective space $\mathbb{R}P^n$ with $\ct(\mathbb{R}P^n)=n+1$, see \cite[Example 1.8]{CLOT}. If $\tau_2$ is the reflection involution on $S^n$ given by $\tau_2((x_0,\dots,x_{n-1},x_n))=(x_0,\dots,x_{n-1},-x_n)$, then the quotient space $S^n/\left< \tau_2 \right>$ is the closed disc $D^n$ with $\ct(D^n)=1$, as $D^n$ is contractible. 
\end{example}

Above examples show that $\ct_G^{\#}(X)$ and $\ct(X/G)$ are independent of each other.

\begin{proposition}
\label{prop:catwrtG under transitive action}
    Let $X$ be a $G$-space such that the $G$-action on $X$ is transitive. If $X$ is not contractible, then $\ct_{G}^{\#}(X)=\infty$.
\end{proposition}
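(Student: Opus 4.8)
The plan is to argue by contraposition: assuming $\ct_G^{\#}(X)$ is finite, I will produce a contractible structure on $X$. So suppose $\ct_G^{\#}(X) = n < \infty$, and let $\{U_i\}_{i=1}^n$ be a $G$-invariant categorical open cover of $X$ witnessing this. The first and essentially only observation I need is that because the $G$-action on $X$ is transitive, the only $G$-invariant subsets of $X$ are $\emptyset$ and $X$ itself: indeed, a nonempty $G$-invariant set contains some orbit $Gx$, and $Gx = X$ by transitivity.

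Applying this to each $U_i$, every member of the cover is either empty or all of $X$. Since $\{U_i\}_{i=1}^n$ covers $X$, at least one of them, say $U_{i_0}$, equals $X$. Being a categorical set means the inclusion $U_{i_0} \hookrightarrow X$ is null-homotopic; but here this inclusion is the identity map of $X$, so $\mathrm{id}_X$ is null-homotopic, i.e.\ $X$ is contractible. This contradicts the hypothesis that $X$ is not contractible, and therefore no finite $G$-invariant categorical open cover exists, i.e.\ $\ct_G^{\#}(X) = \infty$.

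There is no real obstacle here; the only point requiring a moment's care is the identification of ``categorical cover member equal to $X$'' with ``$X$ contractible,'' which is immediate from the definition of a categorical subset (the inclusion map, which in this case is $\mathrm{id}_X$, is null-homotopic). I would present this as a short proof rather than a multi-step argument, possibly remarking that the same reasoning shows $\ct_G^{\#}(X) \in \{2, \infty\}$ is impossible to improve only via $G$-invariant covers when the action is transitive and $X$ is a nontrivial homogeneous space, which contrasts with \Cref{example:new-Gcat(S^n)} where the relevant $G$-actions on $S^n$ are finite (hence far from transitive).
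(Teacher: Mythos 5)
Your proof is correct and follows essentially the same route as the paper: transitivity forces every nonempty $G$-invariant subset to be all of $X$, so any categorical member of the cover would make $\mathrm{id}_X$ null-homotopic, contradicting non-contractibility. No issues.
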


\begin{proof}
    As $G$ acts transitively on $X$, the only non-empty $G$-invariant subset of $X$ is $X$. Hence, there does not exist a $G$-invariant categorical open cover of $X$ since $X$ is not contractible.
\end{proof}

\begin{example}
\label{example: cat(G)-wrt-G = infty}
    If a topological group $G$ acts on itself by left multiplication (or right multiplication by inverse), then we have 
    $$
    \ct_{G}^{\#}(G) =
        \begin{cases}
            1 & \text{if $G$ is contractible},\\
            \infty & \text{if $G$ is  not contractible},
        \end{cases}
    $$
    since the action is transitive. On the other hand, $\ct_G(G) = \ct(G/G) = \ct(*) = 1$ if $G$ is metrizable $G$-space, see \cite[Theorem 1.15]{eqlscategory}.
\end{example}

\begin{example}
\label{example: cat(S^n)-wrt-S^1}
    Let $S^1$ act on $S^n \subset  \R^{n+1}$ by matrix multiplication via the embedding
    \[ 
    e^{i\theta} \mapsto
    \begin{pmatrix}
        \cos\theta & -\sin\theta & 0  & \cdots & 0 \\
        \sin\theta & \cos\theta  & 0  & \cdots & 0 \\
        0          & 0           & 1  & \cdots & 0 \\
        \vdots     & \vdots      & \vdots  & \ddots & \vdots \\
        0          & 0           & 0 &  \cdots & 1
    \end{pmatrix} \in SO(n+1).
    \] 
    If $n = 1$, then by \Cref{example: cat(G)-wrt-G = infty}, we have $\ct_{S^1}^{\#}(S^1)=\infty$. If $n \geq 2$, then the coordinate $x_n$ is fixed by the action of $S^1$ on $x = (x_0,\dots,x_n) \in S^n$. 
    Then $U=\{(x_0,\dots,x_n) \in S^n \mid x_n \neq 1\}$ and $V=\{(x_0,\dots,x_n) \in S^n \mid x_n \neq -1\}$ form a $S^1$-invariant categorical open cover of $S^n$. Hence, $\ct_{S^1}^{\#}(S^n)=2$ as $S^n$ is not contractible.
\end{example}

\begin{definition}
    Let $X$ be a $G$-space. A sequence 
    $
    \emptyset = O_0 , O_1 , \dots , O_k = X
    $
    of $G$-invariant open sets is called a categorical sequence with respect to $G$ of length $k$ if for each $1\leq i\leq k$ there exists a $G$-invariant categorical open set $U_i$ such that $O_{i}\setminus O_{i-1}\subseteq U_i$.
\end{definition}

The proofs of the following two propositions is similar to \cite[Lemma 1.36]{CLOT} and \cite[Proposition 9.14]{CLOT}, respectively.

\begin{proposition}\label{prop:cat-seq-wrt-G}
    A $G$-space $X$ has a categorical sequence with respect to $G$ of length $k$ if and only if $\ct_G^{\#}(X)\leq k$.
\end{proposition}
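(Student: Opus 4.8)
The plan is to unwind the two definitions and check directly that each side supplies the data the other requires, exactly as in the non-equivariant statement \cite[Lemma 1.36]{CLOT}, while tracking $G$-invariance throughout.

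First I would handle the implication from a categorical sequence to the bound on $\ct_G^{\#}$. Suppose $\emptyset = O_0, O_1, \dots, O_k = X$ is a categorical sequence with respect to $G$, witnessed by $G$-invariant categorical open sets $U_1, \dots, U_k$ with $O_i \setminus O_{i-1} \subseteq U_i$. The one point to record is the elementary set-theoretic identity $X = \bigcup_{i=1}^{k}(O_i \setminus O_{i-1})$ (valid whether or not the $O_i$ are assumed nested): given $x \in X = O_k$, let $j$ be the largest index with $x \notin O_{j-1}$, which exists since $x \notin O_0 = \emptyset$, and observe $x \in O_j$, so $x \in O_j \setminus O_{j-1}$. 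Hence $X \subseteq \bigcup_{i=1}^{k} U_i$, so $\{U_1, \dots, U_k\}$ is a $G$-invariant categorical open cover of $X$ and $\ct_G^{\#}(X) \leq k$.

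Conversely, starting from $\ct_G^{\#}(X) \leq k$, I would pick a $G$-invariant categorical open cover $\{U_1, \dots, U_k\}$ of $X$ and set $O_0 := \emptyset$ and $O_i := U_1 \cup \dots \cup U_i$. Each $O_i$ is open and $G$-invariant, being a union of $G$-invariant sets, and $O_k = X$; since $O_i \setminus O_{i-1} \subseteq U_i$, the chain $\emptyset = O_0, O_1, \dots, O_k = X$ is a categorical sequence with respect to $G$ of length $k$.

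I do not expect any real obstacle here. The only thing beyond the classical argument is the observation that the class of $G$-invariant subsets of $X$ is closed under unions, intersections and complements, so passing to the partial unions $U_1 \cup \dots \cup U_i$ and to the differences $O_i \setminus O_{i-1}$ keeps everything $G$-invariant; the equivariance is therefore free, and the content of the proof is purely the non-equivariant bookkeeping.
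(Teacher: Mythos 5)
Your proof is correct and is exactly the argument the paper has in mind: the paper omits the proof, deferring to \cite[Lemma 1.36]{CLOT}, and your two directions (the $U_i$ witnessing the sequence already cover $X$ since $X=\bigcup_i(O_i\setminus O_{i-1})$; conversely the partial unions of a cover form a categorical sequence) are that standard argument with the routine observation that $G$-invariance is preserved under unions and differences.
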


\begin{proposition}
\label{prop:secatwrtG leq catwrtG}
Let $B$ be a $G$-space and $p \colon E\to B$ be a surjective fibration. Then
\begin{itemize}
    \item[(i)] $\secat_{G}^{\#}(p) \leq \ct_{G}^{\#}(B)$.
    \item[(ii)] If $E$ is contractible, then $\secat^{\#}_G(p) = \ct^{\#}_{G}(B)$. 
\end{itemize}
\end{proposition}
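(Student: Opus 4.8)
The plan is to prove the two parts separately, with part (i) serving as the foundation. For part (i), I would mimic the classical argument relating $\secat$ to $\ct$ (the reference \cite{CLOT} and the commented-out lemma in the excerpt both point to this). Let $U \subseteq B$ be a $G$-invariant categorical open set, so there is a homotopy $H \colon U \times I \to B$ with $H(\cdot,0)$ constant at some $b_0 \in B$ and $H(\cdot,1)$ the inclusion $U \hookrightarrow B$. Since $p$ is surjective, pick $e_0 \in p^{-1}(b_0)$, define $\widetilde{H}_0 \colon U \times \{0\} \to E$ to be constant at $e_0$, and use the fact that $p$ is a fibration to lift $H$ to $\widetilde{H} \colon U \times I \to E$ with $p \circ \widetilde{H} = H$. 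Then $s := \widetilde{H}(\cdot, 1) \colon U \to E$ satisfies $p \circ s = H(\cdot,1) = \iota_U$, i.e. $s$ is a section of $p$ over $U$. Applying this to each member of a $G$-invariant categorical open cover of $B$ of size $\ct_G^{\#}(B)$ yields a $G$-invariant open cover of $B$ of the same size with local sections, so $\secat_G^{\#}(p) \leq \ct_G^{\#}(B)$.

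For part (ii), the reverse inequality $\ct_G^{\#}(B) \leq \secat_G^{\#}(p)$ needs the contractibility of $E$. Let $\{U_i\}_{i=1}^{n}$ be a $G$-invariant open cover of $B$ with sections $s_i \colon U_i \to E$ of $p$, where $n = \secat_G^{\#}(p)$. Fix a contraction $K \colon E \times I \to E$ of $E$ with $K(\cdot,0) = \mathrm{id}_E$ and $K(\cdot,1)$ constant at some $e_0 \in E$. For each $i$, the composite $p \circ K \circ (s_i \times \mathrm{id}_I) \colon U_i \times I \to B$ is a homotopy from $p \circ s_i = \iota_{U_i}$ to the constant map at $p(e_0)$; this exhibits each $U_i$ as categorical in $B$. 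Since each $U_i$ is $G$-invariant (being a member of a $G$-invariant cover), $\{U_i\}_{i=1}^{n}$ is a $G$-invariant categorical open cover of $B$, whence $\ct_G^{\#}(B) \leq n = \secat_G^{\#}(p)$. Combined with part (i) this gives equality.

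The main thing to be careful about — and the only real subtlety — is that all the open sets produced stay $G$-invariant, but this is automatic in both directions: in part (i) we start with a $G$-invariant cover and do not alter the open sets (only build sections over them), and in part (ii) likewise the $U_i$ come from a $G$-invariant cover of $B$ and are not modified. The homotopies $\widetilde{H}$ and $K$ need not be $G$-equivariant, which is exactly the point of the ``$\#$'' decoration as opposed to genuine equivariant invariants; so the homotopy lifting property of the ordinary (non-equivariant) fibration $p$ suffices. No completely-normal or separation hypotheses are needed here, in contrast to the product inequalities. I expect part (i) to be routine once the lifting is set up, and part (ii) to be equally short; the ``hard part'' is really just bookkeeping to confirm that surjectivity of $p$ is used exactly once (to choose $e_0$ in part (i)) and contractibility of $E$ exactly once (in part (ii)).
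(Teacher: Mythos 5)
Your proposal is correct and follows essentially the same route as the paper's argument: part (i) by lifting the null-homotopy of a $G$-invariant categorical set through the fibration (using surjectivity only to pick a point $e_0\in p^{-1}(b_0)$), and part (ii) by pushing a contraction of $E$ down through $p\circ s_i$ to exhibit each $U_i$ as categorical. The bookkeeping about $G$-invariance is exactly as you describe, so nothing further is needed.
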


Let $X$ be a $G$-space and $x_0 \in X$. 
Recall that the path space fibration $e_X \colon P_{x_0}X \to X$ is given by $e_X(\alpha)=\alpha(1)$. 

\begin{corollary}
\label{cor:new-Gsecat(e_X)=new-Gcat(X)}
If $X$ is a path-connected $G$-space and $x_0 \in X$, then
$
    \secat_G^{\#}(e_X) = \ct_G^{\#}(X).
$
\end{corollary}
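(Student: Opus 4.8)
The plan is to recognize the statement as an immediate consequence of \Cref{prop:secatwrtG leq catwrtG}(ii) applied to the path space fibration $e_X \colon P_{x_0}X \to X$; this simultaneously confirms that the two descriptions of $\ct_G^{\#}(X)$ given earlier — as $\secat_G^{\#}(e_X)$ and via $G$-invariant categorical covers (\Cref{def:catwrtG}) — agree. To invoke \Cref{prop:secatwrtG leq catwrtG}, I need to check its three hypotheses: that $X$ is a $G$-space (given), that $e_X$ is a surjective fibration, and that the total space $P_{x_0}X$ is contractible.

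First I would note that $e_X$ is a Hurewicz fibration, which is classical for the path space evaluation map on any topological space. Surjectivity is precisely where path-connectedness of $X$ is used: given $x \in X$, pick a path $\alpha$ from $x_0$ to $x$; then $\alpha \in P_{x_0}X$ and $e_X(\alpha) = x$.

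Next I would verify that $P_{x_0}X$ is contractible using the standard reparametrization homotopy $H \colon P_{x_0}X \times I \to P_{x_0}X$ given by $H(\alpha,t)(s) = \alpha((1-t)s)$. This is continuous in the compact-open topology, with $H(\alpha,0) = \alpha$ and $H(\alpha,1)$ equal to the constant path at $x_0$, so $P_{x_0}X$ deformation retracts onto a point.

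With these checks in place, \Cref{prop:secatwrtG leq catwrtG}(ii) yields $\secat_G^{\#}(e_X) = \ct_G^{\#}(X)$ directly, completing the proof. I do not expect any genuine obstacle; the only point worth stressing is that \Cref{prop:secatwrtG leq catwrtG} imposes no $G$-action on the total space, so ordinary (non-equivariant) contractibility of $P_{x_0}X$ is all that is required, and no equivariance of the contracting homotopy is needed.
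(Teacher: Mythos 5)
Your proposal is correct and follows exactly the paper's own argument: invoke \Cref{prop:secatwrtG leq catwrdG}(ii) for the surjective fibration $e_X$ (surjectivity from path-connectedness) together with contractibility of $P_{x_0}X$. The extra details you supply (the reparametrization homotopy and the remark that no equivariance is needed on the total space) are sound but not a different route.
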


\begin{proof}
As $X$ is path-connected, the fibration $e_X$ is surjective. Hence, the claim follows from \Cref{prop:secatwrtG leq catwrtG} (2) since the path space $P_{x_0}X$ is contractible.
\end{proof}

\begin{definition}
A $G$-space $X$ is said to be $G$-connected if 
    $$
        X^H := \{x \in X \mid hx= x \text{ for all }h \in H\}
    $$
is path-connected for every closed subgroup $H$ of $G$.
\end{definition}

\begin{corollary}
\label{cor: cat-wrt-G leq equi-cat}
Suppose $G$ is a Hausdorff topological group. 
If $X$ is a $G$-connected space and $x_0 \in X^G$, then
$
    \ct_G^{\#}(X) \leq \ct_G(X).
$
\end{corollary}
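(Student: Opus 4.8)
The plan is to exhibit, from an equivariant categorical open cover of $X$, a $G$-invariant categorical (in the ordinary sense) open cover of the same cardinality, which immediately gives $\ct_G^{\#}(X)\le\ct_G(X)$. So suppose $\ct_G(X)=n$ and let $\{U_i\}_{i=1}^n$ be a $G$-invariant open cover of $X$ such that each inclusion $U_i\hookrightarrow X$ is $G$-homotopic to a $G$-map $c_i\colon U_i\to X$ whose image lies in a single orbit $Gx_i$. The key observation is that an inclusion which is $G$-homotopic to a map into an orbit is, in particular, ordinarily homotopic to a map into $Gx_i\cong G/G_{x_i}$; so it suffices to show each such $U_i$ is contractible in $X$, i.e. that the inclusion into $X$ is null-homotopic, not merely homotopic to a map into an orbit.

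The hard part is exactly this: an orbit $Gx_i$ need not be contractible in $X$, so being homotopic to a map into an orbit does not by itself give null-homotopy. This is where the hypotheses "$G$ Hausdorff", "$X$ is $G$-connected", and "$x_0\in X^G$" are used. First I would note that since $c_i$ takes values in $Gx_i$, which is a quotient $G/G_{x_i}$, the composite inclusion $U_i\hookrightarrow X$ factors up to homotopy through $Gx_i$. Next, I want to contract $Gx_i$ inside $X$ to the fixed point $x_0$. The orbit $Gx_i\cong G/G_{x_i}$ is the continuous image of $G$; picking a path in $X^{G_{x_i}}$ — which is path-connected by $G$-connectedness, and contains $x_0$ since $x_0\in X^G\subseteq X^{G_{x_i}}$ — from $x_i$ to $x_0$, one can try to push the whole orbit along. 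The cleanest route: the map $G\to X$, $g\mapsto gx_i$, together with a path $\gamma$ from $x_i$ to $x_0$ lying in $X^{G_{x_i}}$, produces a homotopy in $X$ from $g\mapsto gx_i$ to the constant map at $g\mapsto gx_0=x_0$ (the last equality because $x_0\in X^G$). This homotopy descends to $G/G_{x_i}=Gx_i$ because $\gamma$ is $G_{x_i}$-fixed, and then precomposing with $c_i$ and stacking with the $G$-homotopy from the inclusion to $c_i$ shows $U_i\hookrightarrow X$ is null-homotopic. Hence $U_i$ is categorical in the ordinary sense.

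Once each $U_i$ is shown to be an (ordinary) categorical open set, the cover $\{U_i\}_{i=1}^n$ is a $G$-invariant categorical open cover of $X$ of size $n$, so by \Cref{def:catwrtG} we get $\ct_G^{\#}(X)\le n=\ct_G(X)$, as desired.

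The step I expect to be the genuine obstacle is verifying continuity and $G$-equivariance of the descended homotopy $Gx_i\times I\to X$: one must check that the formula built from the orbit map and the fixed path $\gamma$ is well-defined on the quotient $G/G_{x_i}$ (this is where $G_{x_i}$-fixedness of $\gamma$ enters) and continuous — here Hausdorffness of $G$ (hence of $G/G_{x_i}$, and the resulting nice quotient behavior) is what makes the descent go through cleanly. Everything else — concatenating with the ambient $G$-homotopy from the inclusion to $c_i$, and invoking \Cref{def:catwrtG} — is routine.
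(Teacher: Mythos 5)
Your argument is correct in outline but takes a genuinely different route from the paper. The paper proves the inequality as a three-link chain of previously established facts: by \Cref{cor:new-Gsecat(e_X)=new-Gcat(X)}, $\ct_G^{\#}(X)=\secat_G^{\#}(e_X)$ (path-connectedness of $X$ comes from $G$-connectedness applied to the trivial closed subgroup); since $x_0\in X^G$, the path fibration $e_X$ is a $G$-fibration, so $\secat_G^{\#}(e_X)\le\secat_G(e_X)$ by \Cref{prop:secat-wrtG-properties}(1); and finally $\secat_G(e_X)=\ct_G(X)$ is quoted from Colman--Grant \cite{colmangranteqtc}. You instead unwind \Cref{def:equi-cat} and \Cref{def:catwrtG} directly and show that each member of an equivariant categorical cover is ordinarily categorical, by contracting the target orbit to the fixed point $x_0$ along a $G_{x_i}$-fixed path supplied by $G$-connectedness. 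This in effect re-proves, at the level of open sets, the relevant half of the cited Colman--Grant result; the paper's route hides the point-set work inside the citation, while yours makes it explicit and self-contained.

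Two caveats on the step you yourself flag as the obstacle. First, $G$-connectedness only guarantees that $X^H$ is path-connected for \emph{closed} subgroups $H$, so you need $G_{x_i}$ to be closed in $G$; this holds when the point $x_i$ is closed in $X$ (the stabilizer is the preimage of $\{x_i\}$ under $g\mapsto gx_i$), which is not literally among the stated hypotheses. Second, and more substantively, your descended homotopy lives on $G/G_{x_i}\times I$, and to transport it to $U_i$ you must regard $c_i$ as a continuous map into $G/G_{x_i}$; but the canonical continuous bijection $G/G_{x_i}\to Gx_i$ (subspace topology) need not be a homeomorphism for a merely Hausdorff $G$ --- Hausdorffness alone does not make the descent ``go through cleanly'' as you assert. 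It does when, for instance, $G$ is compact and $X$ is Hausdorff, which are exactly the standing hypotheses of \cite{colmangranteqtc}; so the paper's proof implicitly carries the same restrictions and your proof merely surfaces them. With those hypotheses granted, your argument is complete and the final count $\ct_G^{\#}(X)\le n=\ct_G(X)$ follows as you say.
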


\begin{proof}
    Since $x_0$ is fixed under the $G$-action, it follows that $e_X$ is a $G$-fibration.
    Note that $X$ is path-connected as $X$ is $G$-connected and the trivial subgroup of $G$ is closed.
    Hence, the desired inequality 
    $$
        \ct_G^{\#}(X) = \secat_G^{\#}(e_X) \leq \secat_G(e_X) = \ct_G(X) 
    $$
    follows from \Cref{cor:new-Gsecat(e_X)=new-Gcat(X)}, \Cref{prop:secat-wrtG-properties} and \cite[Corollary 4.7]{colmangranteqtc}.
\end{proof}

\begin{proposition} 
\label{prop:new-Gcat-prod-ineq}
    Suppose that $X$ is a path-connected $G_1$-space and $Y$ is a path-connected $G_2$-space. If $X \times Y$ is $(G_1 \times G_2)$-completely normal, then
    $$
        \ct_{G_1\times G_2}^{\#}(X\times Y) \leq \ct_{G_1}^{\#}(X) + \ct_{G_2}^{\#}(Y) - 1,
    $$
    where $G_1 \times G_2$ acts on $X \times Y$ componentwise.
\end{proposition}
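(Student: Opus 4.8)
The plan is to deduce this from the product inequality for sectional category with respect to a group, \Cref{prop:new-Gsecat-prod-ineq}, by recognizing all three $\ct^{\#}$'s as sectional categories of path-space fibrations. Since $X$ and $Y$ are path-connected, so is $X\times Y$; fix basepoints $x_0\in X$ and $y_0\in Y$ and take $(x_0,y_0)\in X\times Y$. By \Cref{cor:new-Gsecat(e_X)=new-Gcat(X)} (applied to the $G_1$-space $X$, the $G_2$-space $Y$, and the $(G_1\times G_2)$-space $X\times Y$) we have
$$
\ct_{G_1}^{\#}(X)=\secat_{G_1}^{\#}(e_X),\qquad \ct_{G_2}^{\#}(Y)=\secat_{G_2}^{\#}(e_Y),\qquad \ct_{G_1\times G_2}^{\#}(X\times Y)=\secat_{G_1\times G_2}^{\#}(e_{X\times Y}),
$$
so it suffices to bound $\secat_{G_1\times G_2}^{\#}(e_{X\times Y})$ by the other two sectional categories.

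Next I would identify $e_{X\times Y}$ with $e_X\times e_Y$ up to fibre-homotopy equivalence. The assignment $\alpha\mapsto(\mathrm{pr}_X\circ\alpha,\ \mathrm{pr}_Y\circ\alpha)$ defines a homeomorphism
$$
P_{(x_0,y_0)}(X\times Y)\ \xrightarrow{\ \cong\ }\ P_{x_0}X\times P_{y_0}Y
$$
(with inverse the pairing of paths), and this homeomorphism intertwines the evaluation maps, i.e.\ it carries $e_{X\times Y}$ to $e_X\times e_Y$ over the identity map of $X\times Y$. Hence $e_{X\times Y}$ and $e_X\times e_Y$ are fibre-homotopy equivalent fibrations over the $(G_1\times G_2)$-space $X\times Y$, and the fibre-homotopy invariance recorded in \Cref{prop:secat-wrtG-ineq}(1) gives
$$
\secat_{G_1\times G_2}^{\#}(e_{X\times Y})=\secat_{G_1\times G_2}^{\#}(e_X\times e_Y).
$$

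Finally, since $X\times Y$ is $(G_1\times G_2)$-completely normal by hypothesis, \Cref{prop:new-Gsecat-prod-ineq} applied to the fibrations $e_X\colon P_{x_0}X\to X$ and $e_Y\colon P_{y_0}Y\to Y$ yields
$$
\secat_{G_1\times G_2}^{\#}(e_X\times e_Y)\le \secat_{G_1}^{\#}(e_X)+\secat_{G_2}^{\#}(e_Y)-1,
$$
and stringing together the displayed identities gives the desired bound. I do not expect a serious obstacle: the argument is a reduction, and the only points needing a line of care are (i) checking that the canonical path-space homeomorphism is genuinely a map of fibrations over $X\times Y$, so that \Cref{prop:secat-wrtG-ineq}(1) applies, and (ii) noting that the complete-normality needed to invoke \Cref{prop:new-Gsecat-prod-ineq} is precisely the hypothesis we are handed.
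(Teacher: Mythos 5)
Your proposal is correct and follows essentially the same route as the paper: identify $e_{X\times Y}$ with $e_X\times e_Y$ under the canonical homeomorphism $P_{(x_0,y_0)}(X\times Y)\cong P_{x_0}X\times P_{y_0}Y$, translate all three invariants via \Cref{cor:new-Gsecat(e_X)=new-Gcat(X)}, and apply \Cref{prop:new-Gsecat-prod-ineq}. The only difference is that you explicitly route the identification through fibre-homotopy invariance (\Cref{prop:secat-wrtG-ineq}(1)), which is a harmless extra precaution.
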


\begin{proof}
Let $(x_0,y_0) \in X \times Y$. 
Then $e_{X \times Y} = e_{X} \times e_{Y}$ under the identification of $P_{(x_0,y_0)}(X\times Y)$ with $P_{x_0} X \times P_{y_0}Y$. Hence, by \Cref{cor:new-Gsecat(e_X)=new-Gcat(X)} and \Cref{prop:new-Gsecat-prod-ineq}, the claim follows.
\end{proof}

\begin{corollary}
\label{cor:new-Gcat-prod-ineq-for-X^k}
    If $G$ is a compact Hausdorff topological group acting continuously on a path-connected Hausdorff topological space $X$ such that $X^k$ is completely normal, then
    $$
    \ct_{G^k}^{\#}(X^k) \leq k(\ct_{G}^{\#}(X)-1)+1.
    $$  
\end{corollary}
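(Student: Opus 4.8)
The plan is to prove the inequality by induction on $k$, using \Cref{prop:new-Gcat-prod-ineq} at each step to peel off one factor of $X$. The base case $k=1$ is the trivial identity $\ct_G^{\#}(X) \le \ct_G^{\#}(X)$, and if $\ct_G^{\#}(X) = \infty$ there is nothing to prove, so we may assume it is finite throughout.

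For the inductive step, I would assume the bound holds for $k-1$ and then record the auxiliary facts needed to invoke \Cref{prop:new-Gcat-prod-ineq} with $G_1 = G^{k-1}$ acting componentwise on $X^{k-1}$ and $G_2 = G$ acting on $X$. The space $X^{k-1}$ is path-connected, being a finite product of path-connected spaces, and it is completely normal: it is homeomorphic to the subspace $X^{k-1}\times\{x_0\}$ of $X^k$, and complete normality is hereditary. Likewise $X$ is path-connected and Hausdorff. The one genuinely substantive point is verifying the hypothesis that $X^{k-1}\times X = X^k$ is $(G^{k-1}\times G)$-completely normal: since $G^{k-1}\times G \cong G^k$ is again a compact Hausdorff topological group acting continuously on the completely normal Hausdorff space $X^k$, this follows from \cite[Lemma 3.12]{colmangranteqtc}, which upgrades complete normality to $G$-complete normality in the presence of a compact group action.

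With these in place, \Cref{prop:new-Gcat-prod-ineq} gives
$$
\ct_{G^k}^{\#}(X^k) = \ct_{G^{k-1}\times G}^{\#}(X^{k-1}\times X) \le \ct_{G^{k-1}}^{\#}(X^{k-1}) + \ct_G^{\#}(X) - 1,
$$
and substituting the inductive hypothesis $\ct_{G^{k-1}}^{\#}(X^{k-1}) \le (k-1)(\ct_G^{\#}(X)-1)+1$ and simplifying $(k-1)(c-1)+1+c-1 = k(c-1)+1$ with $c = \ct_G^{\#}(X)$ yields the claim. The main obstacle here is not any deep argument but simply checking that the hypotheses of \Cref{prop:new-Gcat-prod-ineq} are met at each stage — in particular the passage from ``completely normal'' to ``$G^k$-completely normal'', which is exactly where compactness of $G$ enters and without which the induction would not go through.
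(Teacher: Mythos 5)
Your proof is correct and takes essentially the same route as the paper: the paper likewise deduces complete normality of the lower powers $X^{i}$ from heredity of complete normality, upgrades this to $G^{i}$-complete normality via \cite[Lemma 3.12]{colmangranteqtc} (which is exactly where compactness of $G$ enters, as you note), and then applies \Cref{prop:new-Gcat-prod-ineq} repeatedly — precisely the induction you have written out. There are no gaps.
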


\begin{proof}
    A subspace of completely normal space is completely normal. It follows that $X^{k-1}, \dots, X^2$ are completely normal. 
    Hence, by \cite[Lemma 3.12]{colmangranteqtc}, $X^{i}$ is $G^i$-completely normal for $2 \leq i \leq k$. 
    Hence, by the repeated use of \Cref{prop:new-Gcat-prod-ineq}, the result follows.
\end{proof}

\subsection{Sequential topological complexity and strong sequential topological complexity with respect to $G$}

Let $X$ be a $G$-space. Then the free path space $X^{I}$ is a $G$-space with the following action, 
$$ 
    G \times X^I \to X^I,\quad (g \alpha)(t) =  g(\alpha(t)).
$$  
Also, $X^k$ is a $G$-space with respect to the diagonal action and the componentwise $G$-action makes $X^k$ a $G^k$-space. Consider the $k$-points $0 < \frac{1}{k-1} < \cdots < \frac{i}{k-1} < \cdots < \frac{k-2}{k-1} < 1$ on the interval $I$. Define 
$
    e_{k,X} \colon X^I \to X^k
$
by 
$$
e_{k,X}(\alpha) = \left(\alpha(0), \alpha\left(\frac{1}{k-1}\right), \ldots, \alpha\left(\frac{i}{k-1}\right), \ldots, \alpha\left(\frac{k-2}{k-1}\right), \alpha(1)\right).
$$ 
Then $e_{k,X}$ is a $G$-fibration, see \cite[Lemma 3.5]{BaySarkarheqtc}. Next, we introduce two different types of `higher equivariant topological complexities'.

\begin{definition}\label{def:seqtc-wrt-G}
    Suppose that $X$ is a $G$-space. 
    \begin{enumerate}
\item The \textit{$k$-th topological complexity of $X$ with respect to $G$}, denoted by $\TC_{k,G}^{\#}(X)$, is the smallest number $n$ such that there exists a $G$-invariant open cover $\{U_i\}_{i=1}^{n}$ of $X^k$ (where $G$ acts diagonally on $X^k$) such that over each $U_i$ there exists a continuous motion planning $s_i \colon U_i \to X^I$, i.e., $s_i$ is a section of $e_{k,X}$. 
If no such $n$ exists, we say $\TC_{k,G}^{\#}(X) = \infty$. 
In other words, $\TC_{k,G}^{\#}(X) = \secat_{G}^{\#}(e_{k,X})$.

\item The \textit{$k$-th strong topological complexity of $X$ with respect to $G$}, denoted by $\TC_{k,G}^{\#,*}(X)$, is the smallest number $n$ such that there exists a $G^k$-invariant open cover $\{U_i\}_{i=1}^{n}$ of $X^k$ (where $G^k$ acts componentwise on $X^k$) such that over each $U_i$ there exists a continuous motion planning $s_i \colon U_i \to X^I$, i.e., $s_i$ is a section of $e_{k,X}$. 
If no such $n$ exists, we say $\TC_{k,G}^{\#,*}(X) = \infty$. 
In other words, $\TC_{k,G}^{\#,*}(X) = \secat_{G^k}^{\#}(e_{k,X})$.
    \end{enumerate}
\end{definition}

\begin{remark}\label{rem_equitc}
\begin{enumerate}
    \item If the sections $s_i$'s in \Cref{def:seqtc-wrt-G} (1) are $G$-maps, then the number $n$ is called the sequential equivariant topological complexity, denoted by $\TC_{k,G}(X)$, see \cite{BaySarkarheqtc}. If the sections $s_i$'s in \Cref{def:seqtc-wrt-G} (2) are $G$-maps with diagonal action of $G$ on $U_i$'s, then the number $n$ is called the sequential strong equivariant topological complexity, denoted by $\TC_{k,G}^{*}(X)$, see \cite{PaulSen}. We note that the non-sequential versions of these notions were introduced in \cite{colmangranteqtc} and \cite{strongeqtc}.
    \item We note that our notion of $\TC_{k,G}^{\#}(X)$ is the same as the weak equivariant topological complexity $\TC_{k,G}^{w}(X)$ defined by Farber and Oprea \cite{F-O}. 
    Although the motivations for introducing these invariants were different, they are similar in sense that our aim was to find bounds for the topological complexity of the total spaces of fibre bundles, while Farber and Oprea focused on determining bounds for the parametrized topological complexity of fibre bundles.

\end{enumerate}

\end{remark}

We will now state the immediate consequences of \Cref{def:seqtc-wrt-G} and \Cref{rem_equitc}.

\begin{proposition}
\label{prop:new-TC-properties}
Let $X$ be a $G$-space. 
\begin{enumerate}
    \item $X$ is contractible if and only if $\TC_{k,G}^{\#}(X) = \TC_{k,G}^{\#,*}(X) = 1$.
    \item  In general, 
    $$
        \TC_{k,G}^{\#}(X) \leq  \TC_{k,G}(X) \leq \TC_{k,G}^{*}(X), \, \text{and}
    $$
    $$
        \TC_{k}(X) \leq \TC_{k,G}^{\#}(X) \leq \TC_{k,G}^{\#,*}(X) \leq \TC_{k,G}^{*}(X).
    $$
    
    \item If $G$ acts trivially on $X$, then $\TC_{k}(X) = \TC_{k,G}(X) = \TC_{k,G}^{\#}(X) = \TC_{k,G}^{\#,*}(X) = \TC_{k,G}^{*}(X)$.
    
    \item If $H$ is a subgroup of $G$, then $\TC_{k,H}^{\#}(X) \leq \TC_{k,G}^{\#}(X)$ and $\TC_{k,H}^{\#,*}(X) \leq \TC_{k,G}^{\#,*}(X)$. In particular, $Y$ is another $G$-space, then 
    $$
            \TC_{k,G}^{\#}(X \times Y) \leq \TC_{k,G \times G}^{\#}(X \times Y), \,\text{ and }\,  \TC_{k,G}^{\#,*}(X \times Y) \leq \TC_{k,G \times G}^{\#,*}(X \times Y)
    $$ 
    where $G$ acts on $X \times Y$ diagonally, and $G \times G$ acts on $X \times Y$ componentwise.
\end{enumerate}
\end{proposition}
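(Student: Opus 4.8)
The plan is to reduce every assertion to \Cref{prop:secat-wrtG-properties} applied to the generalized free path space fibration $e_{k,X} \colon X^I \to X^k$, via the identities $\TC_{k,G}^{\#}(X) = \secat_G^{\#}(e_{k,X})$ (diagonal $G$-action on $X^k$), $\TC_{k,G}^{\#,*}(X) = \secat_{G^k}^{\#}(e_{k,X})$ (componentwise $G^k$-action on $X^k$), and $\TC_k(X) = \secat(e_{k,X})$. Beyond \Cref{prop:secat-wrtG-properties}, I will use two extra facts: (a) $e_{k,X}$ is a $G$-fibration for the diagonal $G$-action, by \cite[Lemma 3.5]{BaySarkarheqtc}; and (b) the formal observation that for $H \leq G$ the diagonal $H$-action on $X^k$ (resp.\ the componentwise $H^k$-action) is the restriction of the diagonal $G$-action (resp.\ the componentwise $G^k$-action) along the inclusion, so that an equivariant cover/section for the larger group is one for the subgroup. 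I will also use the descriptions of $\TC_{k,G}(X)$ and $\TC_{k,G}^{*}(X)$ recorded in \Cref{rem_equitc}.

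For (1): by \Cref{prop:secat-wrtG-properties}, each of $\TC_{k,G}^{\#}(X) = 1$ and $\TC_{k,G}^{\#,*}(X) = 1$ holds exactly when $e_{k,X}$ admits a global section, i.e.\ when $\TC_k(X) = 1$, which is equivalent to contractibility of $X$ (standard; see \cite{RUD2010}); hence either equality already forces $X$ to be contractible and, conversely, contractibility of $X$ gives a global section and so both equalities. For (2), I would obtain each inequality by forgetting structure. The chain $\TC_k(X) \leq \TC_{k,G}^{\#}(X) \leq \TC_{k,G}^{\#,*}(X)$ is $\secat(e_{k,X}) \leq \secat_G^{\#}(e_{k,X})$ followed by $\secat_{\Delta(G)}^{\#}(e_{k,X}) \leq \secat_{G^k}^{\#}(e_{k,X})$, both from \Cref{prop:secat-wrtG-properties}, once one observes that the diagonal $G$-action on $X^k$ is the restriction of the componentwise $G^k$-action to the diagonal subgroup $\Delta(G) \leq G^k$. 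The inequality $\TC_{k,G}^{\#}(X) \leq \TC_{k,G}(X)$ is \Cref{prop:secat-wrtG-properties}(1), applicable because $e_{k,X}$ is a $G$-fibration by (a). Finally $\TC_{k,G}^{\#,*}(X) \leq \TC_{k,G}^{*}(X)$ and $\TC_{k,G}(X) \leq \TC_{k,G}^{*}(X)$ hold because a $G^k$-invariant open cover of $X^k$ equipped with diagonal-$G$-equivariant sections is, after forgetting equivariance of the sections, a $G^k$-invariant cover with continuous sections, and, after restricting to the diagonal subgroup, a diagonal-$G$-invariant cover with diagonal-$G$-equivariant sections.

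For (3): if $G$ acts trivially on $X$, then the diagonal $G$-action and the componentwise $G^k$-action on $X^k$ are both trivial, so every open subset of $X^k$ is invariant and every continuous section is automatically equivariant; hence a motion-planning cover realizing $\TC_k(X) = n$ already realizes $\TC_{k,G}^{*}(X) \leq n$, and together with the chains in (2) all five invariants collapse to $\TC_k(X)$. For (4): $\TC_{k,H}^{\#}(X) = \secat_H^{\#}(e_{k,X}) \leq \secat_G^{\#}(e_{k,X}) = \TC_{k,G}^{\#}(X)$ and $\TC_{k,H}^{\#,*}(X) = \secat_{H^k}^{\#}(e_{k,X}) \leq \secat_{G^k}^{\#}(e_{k,X}) = \TC_{k,G}^{\#,*}(X)$ follow directly from \Cref{prop:secat-wrtG-properties}(2), since $H \leq G$ and $H^k \leq G^k$; the ``in particular'' statements are the special case in which the subgroup is the diagonal copy $\Delta(G) \leq G \times G$ (resp.\ $\Delta(G)^k \leq (G\times G)^k$), whose induced action on $X \times Y$ is precisely the diagonal $G$-action. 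I do not expect a genuine obstacle; the only point needing care is the bookkeeping — consistently matching each of the five invariants with the correct ambient group (a power $G^k$ versus a diagonal copy of $G$) and the correct action (diagonal versus componentwise) so that each comparison is literally an instance of \Cref{prop:secat-wrtG-properties}.
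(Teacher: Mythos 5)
Your proposal is correct and takes essentially the same route as the paper, whose proof simply states that (2), (3), (4) follow from the definitions (i.e.\ from unwinding $\secat_G^{\#}$ applied to $e_{k,X}$ and the subgroup/restriction observations you make explicit) and that (1) follows by the argument of Farber's characterization of $\TC(X)=1$. Your write-up is just a careful expansion of that terse proof, with the bookkeeping of groups and actions handled correctly.
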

\begin{proof}
Proofs of $(2)$, $(3)$, $(4)$ follow from their respective definitions. $(1)$ can be proved by a similar argument as in \cite[Theorem 1]{FarberTC}. 
\end{proof}

\begin{theorem}\label{thm:invariance-tck-wrt-G}
    Suppose that $X$ and $Y$ are $G$-spaces. If there exists a $G$-map $g \colon Y \to X$ with left homotopy inverse, then $\TC_{k,G}^{\#}(X) \geq \TC_{k,G}^{\#}(Y)$ and $\TC_{k,G}^{\#,*}(X) \geq \TC_{k,G}^{\#,*}(Y)$. Furthermore, the $k$-th topological complexity with respect to $G$ and the $k$-th strong topological complexity with respect to $G$ of a topological $G$-space are $G$-homotopy invariants.
\end{theorem}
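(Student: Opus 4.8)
The plan is to mimic the proof of the non-equivariant homotopy invariance of $\TC_k$ (and of \Cref{thm:invariance-cat-wrt-G} above), keeping careful track of $G$-invariance and $G^k$-invariance at every stage. The key observation is that a $G$-map $g \colon Y \to X$ induces, by post-composition, a map $g^I \colon Y^I \to X^I$ on free path spaces that is a $G$-map for the pointwise $G$-action, and that $g^k \colon Y^k \to X^k$ is a $G$-map for the diagonal action and a $G^k$-map for the componentwise action. Moreover the square
\[
\begin{tikzcd}
Y^I \arrow[r, "g^I"] \arrow[d, "e_{k,Y}"'] & X^I \arrow[d, "e_{k,X}"] \\
Y^k \arrow[r, "g^k"] & X^k
\end{tikzcd}
\]
strictly commutes, since $e_{k,X}$ and $e_{k,Y}$ are defined by the same evaluation formula. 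So the statement will follow once we show that a left homotopy inverse for $g$ yields (up to homotopy) a corresponding map on the upper corners making the $g^k$-square fit into the hypothesis of a pullback-type or left-homotopy-inverse comparison for $\secat^{\#}$.

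First I would record that if $h \colon X \to Y$ is a (not necessarily $G$-equivariant) left homotopy inverse of $g$, i.e.\ $h \circ g \simeq \mathrm{id}_Y$, then $h^k \colon X^k \to Y^k$ is a left homotopy inverse of $g^k$ (for either action on the source and target, since $h^k$ need not be equivariant), because $h^k \circ g^k = (h\circ g)^k \simeq \mathrm{id}_{Y^k}$ via the $k$-fold product of the homotopy. Similarly $h^I \circ g^I \simeq \mathrm{id}_{Y^I}$. Thus the diagram
\[
\begin{tikzcd}
X^I \arrow[r, "h^I"] \arrow[d, "e_{k,X}"'] & Y^I \arrow[d, "e_{k,Y}"] \\
X^k \arrow[r, "h^k"] & Y^k
\end{tikzcd}
\]
commutes on the nose, $e_{k,X}$ and $e_{k,Y}$ are fibrations (indeed $G$-fibrations, by \cite[Lemma 3.5]{BaySarkarheqtc}), $h^k$ is a left homotopy inverse of the $G$-map $g^k$ (resp.\ $G^k$-map), so \Cref{prop:secat-wrtG-ineq}(3) applies with $f = g^k$ and $g = h^k$ in the notation there. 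This yields $\secat_G^{\#}(e_{k,Y}) \le \secat_G^{\#}(e_{k,X})$, i.e.\ $\TC_{k,G}^{\#}(Y) \le \TC_{k,G}^{\#}(X)$, and likewise $\secat_{G^k}^{\#}(e_{k,Y}) \le \secat_{G^k}^{\#}(e_{k,X})$, i.e.\ $\TC_{k,G}^{\#,*}(Y) \le \TC_{k,G}^{\#,*}(X)$. The ``furthermore'' clause is then immediate: a $G$-homotopy equivalence $g \colon Y \to X$ has a $G$-homotopy inverse, hence a left homotopy inverse, in both directions, giving equalities $\TC_{k,G}^{\#}(X) = \TC_{k,G}^{\#}(Y)$ and $\TC_{k,G}^{\#,*}(X) = \TC_{k,G}^{\#,*}(Y)$.

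The only genuinely non-formal point — and the one I would spell out most carefully — is the verification that $g^k$ is equivariant for the relevant actions and that the square with $e_{k,X}$, $e_{k,Y}$ commutes strictly, so that \Cref{prop:secat-wrtG-ineq}(3) is legitimately applicable; once that is in place, the argument is a direct citation of that proposition, exactly as \Cref{thm:invariance-cat-wrt-G} cites \cite[Lemma 1.29]{CLOT}. I do not anticipate any obstacle beyond this bookkeeping, since completely normality or any point-set hypothesis plays no role here (unlike in the product inequalities). I would therefore present the proof tersely, noting that it is ``similar to the proof of \Cref{thm:invariance-cat-wrt-G}, applied to the fibrations $e_{k,X}$ and $e_{k,Y}$ via \Cref{prop:secat-wrtG-ineq}(3), using that $g^k$ is a $G$-map (resp.\ $G^k$-map) with a left homotopy inverse $h^k$ and that $e_{k,X} \circ h^I = h^k \circ e_{k,X}$.''
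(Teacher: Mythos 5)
Your proposal is correct and follows essentially the same route as the paper: the paper's proof pulls back a $G$-invariant (resp.\ $G^k$-invariant) open set $U$ along $(g^k)^{-1}$ and transports the section using the left homotopy inverse, which is exactly the content of \Cref{prop:secat-wrtG-ineq}(3) that you invoke with $f=g^k$, $\widetilde f = g^I$, $g=h^k$, $\widetilde g = h^I$. Your version is simply a more explicit packaging of the same argument, with the equivariance of $g^k$ and the strict commutativity of the evaluation squares checked correctly.
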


\begin{proof}
    The proof is similar to the proof for $\TC_k$, i.e., if $U$ is an open subset of $X^k$ with a section of $e_{k,Y}$ over $U$, then there is a section of $e_{k,X}$ over $V=(g^k)^{-1}(U)$. It just uses the fact that if $U$ is a $G$-invariant (resp. $G^k$-invariant), then $V$ is also $G$-invariant (resp. $G^k$-invariant) since $g^k \colon Y^k \to X^k$ is a $G^k$-map.
\end{proof}

\begin{proposition}\label{prop:prod-ineq-tck-wrt_G}
Suppose that $X$ is a $G_1$-space and $Y$ is a $G_2$ space. If the product $X^k \times Y^k$ is a $(G_1 \times G_2)$-completely normal space, then
$$
   \TC_{k,G_1\times G_2}^{\#}(X\times Y) \leq \TC_{k,G_1}^{\#}(X) + \TC_{k,G_2}^{\#}(Y) - 1,
$$
If $X^k \times Y^k$ is a $(G_1^k \times G_2^k)$-completely normal, then
$$
    \TC_{k,G_1\times G_2}^{\#,*}(X\times Y) \leq \TC_{k,G_1}^{\#,*}(X) + \TC_{k,G_2}^{\#,*}(Y) - 1,
$$
where $G_1 \times G_2$ acts on $X \times Y$ componentwise.
\end{proposition}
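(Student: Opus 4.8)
The plan is to reduce both inequalities to \Cref{prop:new-Gsecat-prod-ineq}, exactly as the product inequality for $\ct_{G}^{\#}$ was reduced to it in \Cref{prop:new-Gcat-prod-ineq}. The starting point is the standard exponential-law identification: there is a homeomorphism $(X\times Y)^{I}\cong X^{I}\times Y^{I}$ and the obvious homeomorphism $(X\times Y)^{k}\cong X^{k}\times Y^{k}$, and under these the generalized free path space fibration $e_{k,X\times Y}$ corresponds to $e_{k,X}\times e_{k,Y}$; that is, the square with vertical maps $e_{k,X\times Y}$ and $e_{k,X}\times e_{k,Y}$ and horizontal homeomorphisms commutes (strictly, since both routes evaluate at the same $k$ points). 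Whenever a group $H$ acts on $(X\times Y)^{k}$ and correspondingly on $X^{k}\times Y^{k}$ so that this base homeomorphism is $H$-equivariant (and the induced homeomorphism of total spaces is compatible), \Cref{prop:secat-wrtG-ineq} — fibre-homotopy equivalence invariance of $\secat^{\#}$ — gives $\secat_{H}^{\#}(e_{k,X\times Y})=\secat_{H}^{\#}(e_{k,X}\times e_{k,Y})$.

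For the first inequality I would then check that, under $(X\times Y)^{k}\cong X^{k}\times Y^{k}$, the diagonal action of $G_{1}\times G_{2}$ on $(X\times Y)^{k}$ becomes the componentwise action of $G_{1}\times G_{2}$ in which $G_{1}$ acts diagonally on $X^{k}$ and $G_{2}$ acts diagonally on $Y^{k}$, and that the homeomorphism $(X\times Y)^{I}\cong X^{I}\times Y^{I}$ intertwines the corresponding actions. Applying \Cref{prop:new-Gsecat-prod-ineq} with $B_{1}=X^{k}$ carrying the diagonal $G_{1}$-action, $B_{2}=Y^{k}$ carrying the diagonal $G_{2}$-action, $p_{1}=e_{k,X}$ and $p_{2}=e_{k,Y}$ — whose hypothesis is precisely that $X^{k}\times Y^{k}$ be $(G_{1}\times G_{2})$-completely normal — yields
\[
\TC_{k,G_{1}\times G_{2}}^{\#}(X\times Y)=\secat_{G_{1}\times G_{2}}^{\#}(e_{k,X}\times e_{k,Y})\leq\secat_{G_{1}}^{\#}(e_{k,X})+\secat_{G_{2}}^{\#}(e_{k,Y})-1=\TC_{k,G_{1}}^{\#}(X)+\TC_{k,G_{2}}^{\#}(Y)-1.
\]

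For the second inequality the argument is identical except for the bookkeeping on groups: via the canonical isomorphism $(G_{1}\times G_{2})^{k}\cong G_{1}^{k}\times G_{2}^{k}$, the componentwise action of $(G_{1}\times G_{2})^{k}$ on $(X\times Y)^{k}$ becomes, under $(X\times Y)^{k}\cong X^{k}\times Y^{k}$, the componentwise action of $G_{1}^{k}\times G_{2}^{k}$ in which $G_{1}^{k}$ acts componentwise on $X^{k}$ and $G_{2}^{k}$ acts componentwise on $Y^{k}$. Now \Cref{prop:new-Gsecat-prod-ineq} applied to $e_{k,X}$ as a $G_{1}^{k}$-fibration and $e_{k,Y}$ as a $G_{2}^{k}$-fibration — with hypothesis that $X^{k}\times Y^{k}$ be $(G_{1}^{k}\times G_{2}^{k})$-completely normal, which is exactly the second hypothesis in the statement — gives $\secat_{G_{1}^{k}\times G_{2}^{k}}^{\#}(e_{k,X}\times e_{k,Y})\leq\secat_{G_{1}^{k}}^{\#}(e_{k,X})+\secat_{G_{2}^{k}}^{\#}(e_{k,Y})-1$, which after rewriting $\secat_{G_{i}^{k}}^{\#}(e_{k,X})=\TC_{k,G_{i}}^{\#,*}(X)$ is the claimed bound.

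The only genuine content beyond invoking \Cref{prop:new-Gsecat-prod-ineq} is the equivariance of the exponential-law homeomorphisms together with the matching of the various flavours of action — diagonal versus componentwise on $(X\times Y)^{k}$, and the identification $(G_{1}\times G_{2})^{k}\cong G_{1}^{k}\times G_{2}^{k}$. I expect this bookkeeping, rather than any substantive topology, to be the main (and only mild) obstacle; it is also precisely the reason the proposition carries two separate complete-normality hypotheses, one tailored to each inequality.
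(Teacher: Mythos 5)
Your proposal is correct and follows exactly the paper's argument: identify $(X\times Y)^I$ with $X^I\times Y^I$ and $(X\times Y)^k$ with $X^k\times Y^k$ so that $e_{k,X\times Y}=e_{k,X}\times e_{k,Y}$, match the group actions, and invoke \Cref{prop:new-Gsecat-prod-ineq}. The paper's proof is just a terser version of the same reduction, leaving the equivariance bookkeeping implicit.
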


\begin{proof}
Under the natural identifications of $(X\times Y)^I = X^I \times Y^I$ and $(X \times Y)^k = X^k \times Y^k$, we have $e_{k,X\times Y} = e_{k,X} \times e_{k,Y}$. Then, by \Cref{prop:new-Gsecat-prod-ineq}, the result follows.
\end{proof}


    


\begin{proposition}\label{prop: fixed-set-inequality}
    Suppose that $G$ is a compact Hausdorff topological group and $X$ is a Hausdorff $G$-space. If $H$ and $K$ are closed subgroups of $G$ such that $X^H$ is $K$-invariant, then 
    \[
    \TC_{k,K}^{\#}(X^H)\leq \TC_{k,G}(X) \quad \text{and} \quad \TC_{k,K}^{\#,*}(X^H)\leq \TC_{k,G}^{*}(X).
    \]
\end{proposition}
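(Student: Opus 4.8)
The plan is to reduce the statement for $X^H$ to the sequential equivariant topological complexity of $X$ itself by pulling back equivariant motion planning covers along the inclusion $X^H \hookrightarrow X$. First I would recall that $X^H$ is a closed (hence Hausdorff) subspace of $X$, and by hypothesis it is $K$-invariant, so the $G$-action on $X$ restricts to a $K$-action on $X^H$; moreover $e_{k,X^H}\colon (X^H)^I \to (X^H)^k$ is obtained from $e_{k,X}$ by restriction, since a path in $X$ with all $k$ sample points in $X^H$ need not lie in $X^H$ — this is the subtlety to watch. The key observation is that a motion planning section is what remedies this: if $s\colon U \to X^I$ is a section of $e_{k,X}$ over a $G$-invariant open set $U \subseteq X^k$ and $s$ happens to be a $G$-map, then for a tuple $\bar x = (x_1,\dots,x_k) \in U \cap (X^H)^k$, the path $s(\bar x)$ satisfies $h\cdot s(\bar x) = s(h\cdot \bar x) = s(\bar x)$ for every $h \in H$ (using that $H$ fixes each $x_i$), so $s(\bar x)$ lands in $(X^H)^I$. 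Hence $s$ restricts to a section of $e_{k,X^H}$ over $U \cap (X^H)^k$.

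Next I would carry out the cover bookkeeping. Suppose $\TC_{k,G}(X) = n$, witnessed by a $G$-invariant open cover $\{U_1,\dots,U_n\}$ of $X^k$ with $G$-equivariant sections $s_i\colon U_i \to X^I$ of $e_{k,X}$. Set $V_i := U_i \cap (X^H)^k$; each $V_i$ is open in $(X^H)^k$, the $V_i$ cover $(X^H)^k$, and each $V_i$ is $K$-invariant because $U_i$ is $G$-invariant (a fortiori $K$-invariant) and $(X^H)^k$ is $K$-invariant. By the observation above, $s_i|_{V_i}$ is a section of $e_{k,X^H}$ over $V_i$. Therefore $\{V_i\}_{i=1}^n$ is a $K$-invariant open cover of $(X^H)^k$ admitting sectional data, so $\TC_{k,K}^{\#}(X^H) \le n = \TC_{k,G}(X)$; note that we do \emph{not} need the restricted sections to be $K$-maps, which is exactly why the bound lands in $\TC_{k,K}^{\#}$ rather than $\TC_{k,K}$. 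For the second inequality, replace $G$-equivariance of the $s_i$ by $G^k$-equivariance (with the componentwise action, as in the definition of $\TC_{k,G}^*$), and replace $(X^H)^k$ by the $K^k$-invariant subset $(X^H)^k$ inside $X^k$ with the componentwise action; the same argument with $H^k \le G^k$ fixing tuples in $(X^H)^k$ componentwise shows $s_i(\bar x) \in (X^H)^I$, yielding $K^k$-invariant open sets $V_i$ with sections, hence $\TC_{k,K}^{\#,*}(X^H) \le \TC_{k,G}^{*}(X)$.

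The main thing to get right is the interaction between the sample-point fibration and the fixed-point subspace: without the equivariance of the global motion planners there is no reason the connecting paths stay in $X^H$, so the argument genuinely uses that the witnessing sections for $\TC_{k,G}(X)$ (resp. $\TC_{k,G}^*(X)$) are equivariant. Compactness and Hausdorffness of $G$ are not really needed for this elementary argument beyond ensuring $X^H$ is closed and the actions are well-behaved; I would state them as in the hypothesis and not belabor the point. The case $\TC_{k,G}(X) = \infty$ is trivial, so one may assume finiteness throughout.
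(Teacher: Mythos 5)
Your argument is correct, but it takes a more self-contained route than the paper. The paper's proof is two lines: it cites \cite[Proposition~3.14]{BaySarkarheqtc} for $\TC_{k,K}(X^H)\leq \TC_{k,G}(X)$ and \cite[Proposition~5.6]{PaulSen} for $\TC_{k,K}^{*}(X^H)\leq \TC_{k,G}^{*}(X)$, and then applies the general inequalities $\TC_{k,K}^{\#}\leq \TC_{k,K}$ and $\TC_{k,K}^{\#,*}\leq \TC_{k,K}^{*}$ from \Cref{prop:new-TC-properties}~(2). You instead reprove the content of those cited results directly: restrict a $G$-equivariant motion planner to $U_i\cap (X^H)^k$ and use equivariance together with the fact that $H$ fixes the sample points to force the connecting path into $(X^H)^I$. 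This is exactly the right mechanism, and your explicit remark that the restricted sections need not be $K$-maps (so the bound lands in $\TC_{k,K}^{\#}$) is a correct observation — though in fact the restrictions \emph{are} $K$-equivariant, which is why the paper can route through the stronger intermediate inequality $\TC_{k,K}(X^H)\leq\TC_{k,G}(X)$ before weakening. The trade-off is that your version is independent of the external references, at the cost of redoing work the paper outsources.

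One small imprecision in the strong case: the sections witnessing $\TC_{k,G}^{*}(X)$ are not $G^k$-equivariant for the componentwise action (that would not even typecheck, since $X^I$ carries only a $G$-action); by \Cref{rem_equitc}~(1) they are $G$-maps for the \emph{diagonal} action on the $G^k$-invariant sets $U_i$. This does not damage your proof — diagonal $G$-equivariance is exactly what is needed to conclude $h\cdot s_i(\bar x)=s_i(h\cdot\bar x)=s_i(\bar x)$ for $h\in H$ and $\bar x\in(X^H)^k$, while the $G^k$-invariance of the cover is what yields the $K^k$-invariance of the sets $V_i$ — but the phrasing should be corrected.
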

\begin{proof}
It follows from \cite[Proposition 3.14]{BaySarkarheqtc} that $\TC_{k,K}(X^H)\leq \TC_{k,G}(X)$. 
Similarly, for strong equivariant topological complexity we have $\TC_{k,K}^{*}(X^H)\leq \TC_{k,G}^{*}(X)$, see \cite[Proposition 5.6]{PaulSen}. Then we get the desired inequalities from \Cref{prop:new-TC-properties} (2).
\end{proof}



\begin{proposition}
\label{prop:ct-leq-tc-wrtG}
Let $X$ be a path-connected $G$-space and $H$ be a stabilizer of some $x_0\in X$. Then
\[
\ct_{H}^{\#}(X^{k-1}) \leq \TC_{k,H}^{\#}(X) \leq\TC_{k,G}^{\#}(X) \leq \ct_{G}^{\#}(X^k).
\]
\end{proposition}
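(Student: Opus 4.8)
The plan is to prove the chain in three pieces, working outward from the middle. The middle inequality $\TC_{k,H}^{\#}(X) \le \TC_{k,G}^{\#}(X)$ is immediate from \Cref{prop:new-TC-properties}(4), since $H$ is a subgroup of $G$: a $G$-invariant open cover of $X^k$ (with the diagonal action) by domains of local sections of $e_{k,X}$ is in particular an $H$-invariant such cover. For the rightmost inequality $\TC_{k,G}^{\#}(X) \le \ct_{G}^{\#}(X^k)$, I would view $X^k$ as a $G$-space via the diagonal action and apply \Cref{prop:secatwrtG leq catwrtG}(i) to the fibration $e_{k,X}\colon X^I \to X^k$; the only hypothesis that needs a word is surjectivity, which holds because $X$ is path-connected (given $(y_1,\dots,y_k)\in X^k$, concatenate and reparametrize a chain of paths from $y_1$ to $y_2$ to $\cdots$ to $y_k$ so that the $j$-th node sits at $\tfrac{j-1}{k-1}$). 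This gives $\TC_{k,G}^{\#}(X)=\secat_G^{\#}(e_{k,X})\le \ct_G^{\#}(X^k)$.

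The leftmost inequality $\ct_{H}^{\#}(X^{k-1}) \le \TC_{k,H}^{\#}(X)$ is the substantive one. Let $x_0\in X$ have stabilizer $H$ and consider
$$
j\colon X^{k-1}\to X^k,\qquad j(x_1,\dots,x_{k-1})=(x_0,x_1,\dots,x_{k-1}),
$$
where both sides carry the diagonal $H$-action. Since $hx_0=x_0$ for all $h\in H$, the map $j$ is $H$-equivariant. Form the pullback of $e_{k,X}$ along $j$: its total space is $\{\beta\in X^I : \beta(0)=x_0\}=P_{x_0}X$ (the coordinates $\beta(\tfrac{i}{k-1})$, $1\le i\le k-1$, being forced by $\beta$), and the pullback projection is the fibration $p\colon P_{x_0}X\to X^{k-1}$, $p(\beta)=\big(\beta(\tfrac{1}{k-1}),\dots,\beta(1)\big)$, which is surjective again by path-connectedness of $X$. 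Because $P_{x_0}X$ is contractible, \Cref{prop:secatwrtG leq catwrtG}(ii) applies and gives $\secat_H^{\#}(p)=\ct_H^{\#}(X^{k-1})$. On the other hand, the pullback square has base map $j$, which is an $H$-map, so \Cref{prop:secat-wrtG-ineq}(2) yields $\secat_H^{\#}(p)\le \secat_H^{\#}(e_{k,X})=\TC_{k,H}^{\#}(X)$. Concatenating these two identities/inequalities finishes this case, and hence the theorem.

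The main obstacle — indeed essentially the only point that is not a direct appeal to an earlier result — is getting the equivariance bookkeeping right: one must check that $j$ is $H$-equivariant, which is precisely where the hypothesis that $H$ is the stabilizer of a point is used, and one must correctly identify the pullback of $e_{k,X}$ along $j$ with the \emph{contractible} based path space $P_{x_0}X$, so that part (ii) of \Cref{prop:secatwrtG leq catwrtG} (and not merely part (i), which would go the wrong way) can be invoked. Once these two observations are in place, the argument is just an assembly of \Cref{prop:new-TC-properties}, \Cref{prop:secatwrtG leq catwrtG}, and \Cref{prop:secat-wrtG-ineq}.
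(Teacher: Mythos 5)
Your proof is correct, and for the middle and right inequalities it coincides with the paper's argument (the paper cites \Cref{prop:new-TC-properties} for the subgroup monotonicity and \Cref{prop:secatwrtG leq catwrtG}(i) for $\TC_{k,G}^{\#}(X)\le \ct_G^{\#}(X^k)$, exactly as you do). For the left inequality you also use the same pullback of $e_{k,X}$ along the same $H$-map $j(x_1,\dots,x_{k-1})=(x_0,x_1,\dots,x_{k-1})$, and the same appeal to \Cref{prop:secat-wrtG-ineq}(2) to get $\secat_H^{\#}(q)\le \TC_{k,H}^{\#}(X)$.

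The one place where you genuinely diverge is the final comparison $\ct_H^{\#}(X^{k-1})\le\secat_H^{\#}(q)$. The paper constructs an explicit map $\phi\colon Q\to P_{\bar{x}_0}X^{k-1}$ (sending $\gamma$ to the tuple of its reparametrized restrictions to $[0,\tfrac{i}{k-1}]$) making the triangle over $X^{k-1}$ commute, and then invokes \Cref{prop:secat-wrtG-ineq}(1) together with \Cref{cor:new-Gsecat(e_X)=new-Gcat(X)}. You instead observe that the pullback total space is the contractible based path space $P_{x_0}X$ and apply \Cref{prop:secatwrtG leq catwrtG}(ii) directly to the surjective fibration $q$, obtaining the equality $\secat_H^{\#}(q)=\ct_H^{\#}(X^{k-1})$ in one stroke. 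Your route is slightly more economical, since it dispenses with the auxiliary map $\phi$ and only needs the (easy) contractibility of $P_{x_0}X$ plus surjectivity of $q$, both of which you justify; the paper's route has the mild advantage of not relying on part (ii) of \Cref{prop:secatwrtG leq catwrtG} and of exhibiting the fibrewise comparison explicitly. Either way the equivariance bookkeeping you emphasize (that $j$ is $H$-equivariant precisely because $H$ stabilizes $x_0$) is exactly the point the paper's proof also turns on.
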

\begin{proof}
The middle inequality follows from \Cref{prop:new-TC-properties} (5), and the right inequality follows from \Cref{prop:secatwrtG leq catwrtG} (1).
We will now prove the left inequality.
Define a map $f\colon X^{k-1}\to X^k$ by $f(x_1,\dots,x_{k-1})=(x_0,x_1,\dots,x_{k-1})$.
Then note that $f$ is an $H$-equivariant map. 
Now consider the following pullback diagram
\[ 
    \begin{tikzcd}
Q \arrow{r}{} \arrow[swap]{d}{q} & X^I \arrow{d}{e_{k,X}} \\
X^{k-1} \arrow{r}{f}&X^k
    \end{tikzcd},
\] 
where the space $Q := \{\gamma\in X^I \mid \gamma(0)=x_0\}$ and $q \colon Q \to X^{k-1}$ is the map given by ${q(\gamma) =  \left( \gamma\left(\frac{1}{k-1}\right), \ldots, \gamma\left(\frac{i}{k-1}\right), \ldots, \gamma\left(\frac{k-2}{k-1}\right), \gamma(1)\right)}$.
Then, by \Cref{prop:secat-wrtG-ineq} (2), we have $\secat_H^{\#}(q)\leq \secat_{H}^{\#}(e_{k,X}) = \TC_{k,H}^{\#}(X)$. 

Let $\bar{x}_0=(x_0,\dots,x_0) \in X^{k-1}$. Define a map $\phi\colon Q \to P_{\bar{x}_0}X^{k-1}$ by $\phi(\gamma)=(\gamma_1,\dots,\gamma_{k-1})$, where $\gamma_i \colon [0,1] \to X$ is the path $\gamma$ restricted to the interval $\left[0,\frac{i}{k-1}\right]$. Note that $\gamma_i(0)=x_0$ and $\gamma_i(1)=\gamma\left(\frac{i}{k-1}\right)$. Hence, the diagram
\[
\begin{tikzcd}
Q \arrow[rr, "\phi"] \arrow[rd, "q"'] &     & P_{\bar{x}_0}X^{k-1} \arrow[ld, "{e_{X^{k-1}}}"] \\
                                      & X^{k-1} &                              
\end{tikzcd}
\]
commutes. Thus, by \Cref{cor:new-Gsecat(e_X)=new-Gcat(X)} and \Cref{prop:secat-wrtG-ineq} (1), it follows that $\ct_{H}^{\#}(X^{k-1}) = \secat_{H}^{\#}(e_{X^{k-1}}) \leq \secat_{H}^{\#}(q)$. Hence, we get $\ct_{H}^{\#}(X^{k-1}) \leq \secat_{H}^{\#}(q) \leq \TC_{k,H}^{\#}(X)$.
\end{proof}



It is clear that if $X$ is a path-connected $G$-space with $X^G \neq \emptyset$, then, by \Cref{prop:ct-leq-tc-wrtG}, it follows that
$$
        \TC_{k,G}^{\#}(X) \leq \ct_{G}^{\#}(X^k) \leq \TC_{k+1,G}^{\#}(X),
$$
since the stabilizer subgroup of a point in $X^G$ is $G$. As Prof. Oprea pointed out, the fixed-point condition can removed. We  now provide the proof suggested by him.

\begin{proposition}
\label{prop: TC_k-wrt-G leq TC_(k+1)-wrt-G}
    For a $G$-space $X$, we have  
    $\TC_{k,G}^{\#}(X) \leq \TC_{k+1,G}^{\#}(X)$ for all $k \geq 2$.
\end{proposition}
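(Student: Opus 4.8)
The plan is to express $\TC_{k,G}^{\#}(X)$ and $\TC_{k+1,G}^{\#}(X)$ as sectional categories with respect to $G$ of two auxiliary fibrations over $X^k$ and $X^{k+1}$, and then to link them through the map that duplicates the last coordinate, in the same spirit as the pullback argument used in the proof of \Cref{prop:ct-leq-tc-wrtG}.

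First I would set $f\colon X^k \to X^{k+1}$, $f(x_1,\dots,x_k) = (x_1,\dots,x_k,x_k)$, which is a $G$-map for the diagonal $G$-actions. Forming the pullback of the $G$-fibration $e_{k+1,X}\colon X^I \to X^{k+1}$ along $f$, one checks directly that (over $X^k$) this pullback is $G$-homeomorphic to
$$
q\colon Q \to X^k, \qquad Q := \big\{\alpha \in X^I \mid \alpha(\tfrac{k-1}{k}) = \alpha(1)\big\}, \qquad q(\alpha) = \big(\alpha(0), \alpha(\tfrac{1}{k}), \dots, \alpha(\tfrac{k-1}{k})\big),
$$
because the equality $e_{k+1,X}(\alpha) = f(x_1,\dots,x_k)$ forces $\alpha(\tfrac{j}{k}) = x_{j+1}$ for $0 \le j \le k-1$ together with $\alpha(1) = \alpha(\tfrac{k-1}{k})$. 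As a pullback of a fibration, $q$ is a fibration, so \Cref{prop:secat-wrtG-ineq}(2) gives
$$
\secat_G^{\#}(q) \le \secat_G^{\#}(e_{k+1,X}) = \TC_{k+1,G}^{\#}(X).
$$

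It then remains to see $\secat_G^{\#}(e_{k,X}) \le \secat_G^{\#}(q)$. For this I would use the linear reparametrization $r\colon I \to I$, $r(t) = \tfrac{k-1}{k}t$, and the induced map $r^*\colon X^I \to X^I$, $\alpha \mapsto \alpha\circ r$, which is continuous in the compact-open topology. Restricting $r^*$ to $Q$, a short computation shows $e_{k,X}(r^*\alpha) = q(\alpha)$ for every $\alpha \in Q$, since $(r^*\alpha)(\tfrac{j}{k-1}) = \alpha(\tfrac{j}{k})$ for $0 \le j \le k-2$ and $(r^*\alpha)(1) = \alpha(\tfrac{k-1}{k})$. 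Hence the square with vertical maps $q$ and $e_{k,X}$, the identity of $X^k$ along the bottom, and $r^*|_Q$ along the top commutes strictly, and \Cref{prop:secat-wrtG-ineq}(1) yields $\secat_G^{\#}(e_{k,X}) \le \secat_G^{\#}(q)$. Chaining the two inequalities gives $\TC_{k,G}^{\#}(X) = \secat_G^{\#}(e_{k,X}) \le \secat_G^{\#}(q) \le \TC_{k+1,G}^{\#}(X)$, as desired.

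There is no serious obstacle: once the two auxiliary fibrations are in place, the statement is a formal consequence of \Cref{prop:secat-wrtG-ineq}. The only points requiring attention are the identification of the pullback of $e_{k+1,X}$ along $f$ with $q$ (a matter of tracking the evaluation points $0,\tfrac1k,\dots,\tfrac{k-1}{k},1$ against $0,\tfrac1{k-1},\dots,\tfrac{k-2}{k-1},1$) and the continuity of $r^*$ on $X^I$ with the compact-open topology, which is the standard fact that precomposition by a fixed continuous self-map of $I$ is continuous. One should also note that $q$ is genuinely a $G$-fibration, its pullback description being compatible with the pointwise $G$-action on paths, which is what licenses the two applications of \Cref{prop:secat-wrtG-ineq}.
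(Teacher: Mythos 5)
Your argument is correct, but it proceeds differently from the paper's proof. The paper also starts from the coordinate-duplicating map $\mu(x_1,\dots,x_k)=(x_1,\dots,x_k,x_k)$, but then shows that $\mu\circ e_{k,X}$ is \emph{homotopic} to $e_{k+1,X}$ via an explicit homotopy sliding the evaluation points $i/(k-1)$ to $i/k$, and uses the fact that $X^k$ is a retract of $X^{k+1}$ (so that $\mu_*\colon [Y,X^k]\to[Y,X^{k+1}]$ is injective) to conclude that the composite $s\circ\mu|_V$ is a homotopy section of $e_{k,X}$ over $V=\mu^{-1}(U)$, which is then rectified to a genuine section using the homotopy lifting property. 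You instead replace the homotopy by a strict identification: pulling $e_{k+1,X}$ back along $\mu$ gives the fibration $q\colon Q\to X^k$ on the nose, and the linear reparametrization $r^*$ makes the comparison square with $e_{k,X}$ commute strictly, so both inequalities follow formally from \Cref{prop:secat-wrtG-ineq} (1) and (2) with no retraction argument and no appeal to injectivity of $\mu_*$. Your route is the exact analogue of the pullback argument used in \Cref{prop:ct-leq-tc-wrtG} and is arguably cleaner; the paper's route avoids identifying the pullback explicitly and instead packages the reparametrization into a homotopy, at the cost of the extra retract/injectivity step. Both are valid; the only hypotheses you use beyond the paper's are standard (continuity of precomposition in the compact-open topology, and that a pullback of a fibration is a fibration), and note that \Cref{prop:secat-wrtG-ineq} only requires the base map to be equivariant, so your remark about the $G$-action on $Q$ is not even needed.
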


\begin{proof}
    Suppose $\mu \colon X^k \to X^{k+1}$ is the map defined by $\mu(x_1,\dots,x_k) = (x_1,\dots ,x_k,x_k)$. Note that $X^k$ is a retract of $X^{k+1}$, where $X^k$ sits inside $X^{k+1}$ via $\mu$, with retraction given by the projection map onto the first $k$-coordinates. Hence, for any topological space $Y$, the induced map on homotopy classes of maps $\mu_* \colon [Y, X^k] \to [Y, X^{k+1}]$ is injective.
    
    Now we show that $\mu \circ e_{k,X} \simeq e_{k+1,X}$, i.e., the following diagram
    \[
    \begin{tikzcd}
               & X^I \arrow[ld, "{e_{k,X}}"'] \arrow[rd, "{e_{k+1,X}}"] &         \\
    X^k \arrow[rr, "\mu"] &                        & X^{k+1}
    \end{tikzcd}
    \]
    commutes up to homotopy. Define a homotopy $H \colon X^I \times I \to X^{k+1}$ by
    $$
        H(\gamma,t) 
            := (\gamma(0), \gamma(s_1(t)), \dots,\gamma(s_i(t)), \dots, \gamma(s_{k-1}(t)),\gamma(1)),
    $$
    where $s_i$ is the line segment joining the point $i/(k-1)$ to $i/k$ given by
    $$
        s_i(t) = \frac{(1-t)i}{k-1}+\frac{ti}{k}
    $$
    for $1 \leq i \leq k-1$. Then
    $$
        H(\gamma,0) 
            = \left(\gamma(0), \gamma\left(\frac{1}{k-1}\right), \dots,\gamma\left(\frac{i}{k-1}\right), \dots, \gamma\left(1\right),\gamma(1)\right)
            = (\mu \circ e_{k,X})(\gamma)
    $$
    and
    $$
        H(\gamma,1) 
            = \left(\gamma(0), \gamma\left(\frac{1}{k}\right), \dots,\gamma\left(\frac{i}{k}\right), \dots, \gamma\left(\frac{k-1}{k}\right),\gamma(1)\right)
            = e_{k+1,X}(\gamma).
    $$
    Suppose $U$ is a $G$-invariant open subset of $X^{k+1}$ with a section $s \colon U \to X^I$ of $e_{k+1,X}$. Then $V = \mu^{-1}(U)$ is a $G$-invariant open subset of $X^k$. Let $t = s \circ \left.\mu\right|_{V} \colon V \to X^I$. Then we have
    $$
     \mu \circ e_{k,X} \circ t 
        \simeq e_{k+1,X} \circ t
        = e_{k+1,X} \circ s \circ \left.\mu\right|_{V}
        = \left.\mu\right|_{V}.
    $$
    As $\mu_*$ is injective, we get $e_{k,X} \circ t \simeq i_{V} \colon V \hookrightarrow X^{k}$, i.e, $t$ is a homotopy section of $e_{k,X}$ over $V$. As $e_{k,X}$ is a fibration, we can get a section of $e_{k,X}$ over $V$.
\end{proof}


\begin{proposition}
\label{prop:ct-leq-strongtc-wrtG}
Let $X$ be a path-connected $G$-space. Then
\[   
    \ct_{G^{k-1}}^{\#}(X^{k-1})\leq\TC_{k,G}^{\#,*}(X) \leq \ct_{G^{k}}^{\#}(X^k).
\]
\end{proposition}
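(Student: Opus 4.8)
The plan is to follow the proof of \Cref{prop:ct-leq-tc-wrtG} almost verbatim, with one substitution: the role played there by a point stabilizer $H$ acting \emph{diagonally} will here be played by the subgroup $\{1\}\times G^{k-1}\leq G^k$ acting \emph{componentwise}. This substitution is precisely what lets us drop the fixed-point hypothesis.

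For the right-hand inequality $\TC_{k,G}^{\#,*}(X)\leq\ct_{G^k}^{\#}(X^k)$: since $X$ is path-connected, $X^k$ is path-connected, so $e_{k,X}\colon X^I\to X^k$ is a surjective fibration; regarding $X^k$ as a $G^k$-space via the componentwise action, \Cref{prop:secatwrtG leq catwrtG}~(i) applied with the group $G^k$ gives at once $\secat_{G^k}^{\#}(e_{k,X})\leq\ct_{G^k}^{\#}(X^k)$, which is the claim by \Cref{def:seqtc-wrt-G}~(2).

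For the left-hand inequality $\ct_{G^{k-1}}^{\#}(X^{k-1})\leq\TC_{k,G}^{\#,*}(X)$, I would fix any $x_0\in X$, set $H=\{1\}\times G^{k-1}\leq G^k$ (identified with $G^{k-1}$ in the obvious way), and note that under this identification the componentwise $H$-action on $X^k$ is trivial on the first coordinate and componentwise on the remaining $k-1$ coordinates; hence the map $f\colon X^{k-1}\to X^k$, $f(x_1,\dots,x_{k-1})=(x_0,x_1,\dots,x_{k-1})$, is an injective $H$-equivariant map for \emph{every} choice of $x_0$. Then I would form the pullback of $e_{k,X}$ along $f$; because $f$ is injective this is the fibration $q\colon Q\to X^{k-1}$ with $Q=\{\gamma\in X^I\mid\gamma(0)=x_0\}$ and $q(\gamma)=\bigl(\gamma(\frac{1}{k-1}),\dots,\gamma(\frac{k-2}{k-1}),\gamma(1)\bigr)$, exactly as in \Cref{prop:ct-leq-tc-wrtG}. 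By \Cref{prop:secat-wrtG-ineq}~(2) and then \Cref{prop:secat-wrtG-properties}~(2) (for $H\leq G^k$),
\[
\secat_{G^{k-1}}^{\#}(q)=\secat_{H}^{\#}(q)\leq\secat_{H}^{\#}(e_{k,X})\leq\secat_{G^k}^{\#}(e_{k,X})=\TC_{k,G}^{\#,*}(X).
\]
Finally, with $\bar x_0=(x_0,\dots,x_0)$ and the map $\phi\colon Q\to P_{\bar x_0}X^{k-1}$, $\phi(\gamma)=(\gamma_1,\dots,\gamma_{k-1})$ where $\gamma_i$ is $\gamma$ restricted and reparametrized on $[0,\frac{i}{k-1}]$, one has $e_{X^{k-1}}\circ\phi=q$; since $X^{k-1}$ is a path-connected $G^{k-1}$-space, \Cref{cor:new-Gsecat(e_X)=new-Gcat(X)} gives $\secat_{G^{k-1}}^{\#}(e_{X^{k-1}})=\ct_{G^{k-1}}^{\#}(X^{k-1})$, while \Cref{prop:secat-wrtG-ineq}~(1) gives $\secat_{G^{k-1}}^{\#}(q)\geq\secat_{G^{k-1}}^{\#}(e_{X^{k-1}})$, so chaining these with the display above yields $\ct_{G^{k-1}}^{\#}(X^{k-1})\leq\TC_{k,G}^{\#,*}(X)$.

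I do not expect a genuine obstacle; the one thing to get right is the group bookkeeping. In particular, one should note that in \Cref{prop:secat-wrtG-ineq} only the base map needs to be $G$-equivariant, so neither $\phi$ nor the pullback projection $Q\to X^I$ has to be equivariant — which is essential here, since $X^I$ carries no natural $G^k$-action compatible with the componentwise action on $X^k$. The only mildly delicate point is recognizing that choosing $\{1\}\times G^{k-1}$ rather than a diagonal copy of $G^{k-1}$ is exactly what makes $f$ equivariant for an arbitrary base point $x_0$, so that — unlike in \Cref{prop:ct-leq-tc-wrtG} — no stabilizer or fixed-point assumption is needed.
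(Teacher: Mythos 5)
Your proof is correct and follows essentially the same route as the paper: the right inequality via \Cref{prop:secatwrtG leq catwrtG}(i) applied to the surjective fibration $e_{k,X}$ over the $G^k$-space $X^k$, and the left inequality by pulling back $e_{k,X}$ along $f(x_1,\dots,x_{k-1})=(x_0,x_1,\dots,x_{k-1})$, which is equivariant for $\{e\}\times G^{k-1}\leq G^k$, and then comparing the pullback with $e_{X^{k-1}}$ via the map $\phi$. Your remarks on the group bookkeeping (why the subgroup $\{e\}\times G^{k-1}$ removes any fixed-point hypothesis, and why $\phi$ need not be equivariant) match the paper's argument exactly.
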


\begin{proof}
The right inequality follows from \Cref{prop:secatwrtG leq catwrtG} (1). 
We will now prove the left inequality. Let $x_0 \in X$ and $f\colon X^{k-1}\to X^k$ be a map defined as $f(x_1,\dots,x_{k-1})=(x_0,x_1,\dots,x_{k-1})$. Then note that $f$ is a $(G^{k-1})$-equivariant map where $G^{k-1}$ acts on $X^{k}$ as a subgroup $\{e\} \times G^{k-1}$ of $G^k$ (here $e \in G$ is the identity element). 
Considering the same pullback diagram as in \Cref{prop:ct-leq-tc-wrtG}, by \Cref{prop:secat-wrtG-ineq} (2) and \Cref{prop:secat-wrtG-properties}, we have $\secat_{G^{k-1}}^{\#}(q)\leq \secat_{G^{k-1}}^{\#}(e_{k,X}) \leq \secat_{G^{k}}^{\#}(e_{k,X}) = \TC_{k,G}^{\#,*}(X)$.  
Using the same argument as in \Cref{prop:ct-leq-tc-wrtG}, we have $\ct_{G^{k-1}}^{\#}(X^{k-1}) = \secat_{G^{k-1}}^{\#}(e_{X^{k-1}}) \leq \secat_{G^{k-1}}^{\#}(q)$. 
Hence, we get the left inequality $\ct_{G^{k-1}}^{\#}(X^{k-1}) \leq \secat_{G^{k-1}}^{\#}(q) \leq \TC_{k,G}^{\#,*}(X)$.
\end{proof}

\begin{corollary}
    Suppose that $X$ is a path-connected $G$-space, then $\TC_{k,G}^{\#,*}(X) \leq \TC_{k+1,G}^{\#,*}(X)$ for all $k \geq 2$.
\end{corollary}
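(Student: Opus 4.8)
The plan is to obtain the inequality as a purely formal consequence of \Cref{prop:ct-leq-strongtc-wrtG}, in the same spirit as \Cref{prop: TC_k-wrt-G leq TC_(k+1)-wrt-G} was used for $\TC_{k,G}^{\#}$. Concretely, since $X$ is path-connected, \Cref{prop:ct-leq-strongtc-wrtG} applies verbatim and gives the upper bound
\[
\TC_{k,G}^{\#,*}(X) \leq \ct_{G^{k}}^{\#}(X^k).
\]
Applying the same proposition with $k$ replaced by $k+1$ (again using only that $X$ is path-connected) gives the lower bound
\[
\ct_{G^{k}}^{\#}(X^k) = \ct_{G^{(k+1)-1}}^{\#}\big(X^{(k+1)-1}\big) \leq \TC_{k+1,G}^{\#,*}(X).
\]
Concatenating the two displays yields $\TC_{k,G}^{\#,*}(X) \leq \ct_{G^{k}}^{\#}(X^k) \leq \TC_{k+1,G}^{\#,*}(X)$, which is exactly the assertion.

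The observation that makes this work without any additional hypotheses is that the intermediate quantity $\ct_{G^{k}}^{\#}(X^k)$ appears on the \emph{right}-hand side of \Cref{prop:ct-leq-strongtc-wrtG} for the index $k$ and on the \emph{left}-hand side of that same proposition for the index $k+1$; it therefore serves as a bridge, and nothing about completeness, normality, or fixed points needs to be invoked. Unlike \Cref{prop: TC_k-wrt-G leq TC_(k+1)-wrt-G} --- where for the non-strong invariant one had to construct an explicit homotopy $H$ and use the retraction $\mu\colon X^k\to X^{k+1}$ --- here the monotonicity in $k$ falls out for free from the already-established two-sided bound.

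I do not anticipate any real obstacle: the entire content of the corollary is carried by \Cref{prop:ct-leq-strongtc-wrtG}, and the proof is a one-line chaining of inequalities. The only point worth recording explicitly is that both invocations of the proposition are legitimate, since each requires only path-connectedness of $X$, which is part of the hypothesis and is unaffected by shifting the index from $k$ to $k+1$.
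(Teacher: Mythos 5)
Your proposal is correct and is exactly the argument the paper gives: chain the two-sided bound of \Cref{prop:ct-leq-strongtc-wrtG}, using $\ct_{G^{k}}^{\#}(X^k)$ as the bridge between the right-hand inequality at index $k$ and the left-hand inequality at index $k+1$. Nothing further is needed.
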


\begin{proof}
    By \Cref{prop:ct-leq-strongtc-wrtG}, it follows that $\TC_{k,G}^{\#,*}(X) \leq \ct_{G^{k}}^{\#}(X^k) \leq \TC_{k+1,G}^{\#,*}(X)$.
\end{proof}

\begin{corollary}
\label{cor:TC-wrt-G under transitive action}
    Let $X$ be a path-connected $G$-space such that the $G$-action on $X$ is transitive. If $X$ is not contractible, then $\TC_{k,G}^{\#,*}(X)=\infty$. Moreover, if $X$ has a fixed point, then $\TC_{2,G}^{\#}(X)=\infty$.
\end{corollary}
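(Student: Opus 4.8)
The plan is to derive both conclusions from \Cref{prop:catwrtG under transitive action}, which says that a transitive action on a non-contractible space has infinite LS category with respect to the acting group, by feeding it through the lower bounds for the sequential invariants established in \Cref{prop:ct-leq-strongtc-wrtG} and \Cref{prop:ct-leq-tc-wrtG}.

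For the first assertion I would begin with the inequality $\ct_{G^{k-1}}^{\#}(X^{k-1}) \leq \TC_{k,G}^{\#,*}(X)$ from \Cref{prop:ct-leq-strongtc-wrtG}, where $G^{k-1}$ acts componentwise on $X^{k-1}$. Two observations finish the argument: first, the componentwise action of $G^{k-1}$ on $X^{k-1}$ is again transitive, since one may correct each coordinate independently using transitivity of the $G$-action on $X$; second, $X^{k-1}$ is not contractible, because $X$ is a retract of $X^{k-1}$ (include via the diagonal, retract by the first projection) and a retract of a contractible space would be contractible. Applying \Cref{prop:catwrtG under transitive action} to the $G^{k-1}$-space $X^{k-1}$ then forces $\ct_{G^{k-1}}^{\#}(X^{k-1}) = \infty$, whence $\TC_{k,G}^{\#,*}(X) = \infty$.

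For the second assertion I would use that a fixed point $x_0 \in X^G$ has stabilizer $G$, so that \Cref{prop:ct-leq-tc-wrtG} with $k=2$ and $H=G$ yields $\ct_{G}^{\#}(X) \leq \TC_{2,G}^{\#}(X)$ (here the space $X^{k-1}$ appearing in that proposition is just $X$). Since $G$ acts transitively on the non-contractible space $X$, \Cref{prop:catwrtG under transitive action} gives $\ct_G^{\#}(X) = \infty$, and therefore $\TC_{2,G}^{\#}(X) = \infty$.

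There is no real obstacle beyond verifying the hypotheses of the cited propositions; the only points requiring a moment's care are the transitivity of the componentwise action and the non-contractibility of $X^{k-1}$, both dispatched above. (One may also note that a transitive action possessing a fixed point forces $X$ to be a single point; thus the second assertion has content only as a formal consequence of the displayed inequalities, though the deduction is valid in any case.)
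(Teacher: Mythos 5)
Your proof is correct and follows essentially the same route as the paper's: both parts reduce to \Cref{prop:catwrtG under transitive action} via the lower bounds in \Cref{prop:ct-leq-strongtc-wrtG} and \Cref{prop:ct-leq-tc-wrtG}. Your explicit verification that $X^{k-1}$ is transitive and non-contractible (via the retraction onto $X$) is a small detail the paper leaves implicit, but it is the same argument.
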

\begin{proof}
    By \Cref{prop:catwrtG under transitive action} and \Cref{prop:ct-leq-strongtc-wrtG}, we have $\infty = \ct_{G^{k-1}}^{\#}(X^{k-1}) \leq \TC_{k,G}^{\#,*}(X)$, as the action of $G^{k-1}$ on $X^{k-1}$ is transitive. 
    If $x_0 \in X$ is a fixed point of $X$ under $G$-action, then the stabilizer subgroup of $x_0$ is $G$. Hence, by \Cref{prop:catwrtG under transitive action} and \Cref{prop:ct-leq-tc-wrtG}, we get $\infty = \ct_{G}^{\#}(X) \leq \TC_{2,G}^{\#}(X)$.
\end{proof}

\begin{example}
\label{example: TC(G)-wrt-G}
    If a topological group $G$ acts on itself by left multiplication (or right multiplication by inverse), then the action is transitive and we have 
    $$
    \TC_{k,G}^{\#,*}(G) =
        \begin{cases}
            1 & \text{if $G$ is contractible},\\
            \infty & \text{if $G$ is  not contractible}.
        \end{cases}
    $$
\end{example}


\begin{theorem}
\label{theorem:TC-wrt-G=cat-wrt-G for topological group}
   Let $A$ be a path-connected topological group with an action of $G$ via topological group homomorphisms. 
   Then $TC^{\#}_{k,G}(A)=\ct^{\#}_G(A^{k-1})$.
\end{theorem}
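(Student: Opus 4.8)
The plan is to use the group structure of $A$ to identify, through a commuting square of $G$-homeomorphisms, the $G$-fibration $e_{k,A}\colon A^I \to A^k$ (with the diagonal $G$-action on $A^k$) with a product fibration $\mathrm{id}_A \times q$, then to strip off the $\mathrm{id}_A$ factor, and finally to recognize $q$ as a surjective fibration with contractible total space so that \Cref{prop:secatwrtG leq catwrtG} (ii) applies.

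First I would set $P_eA = \{\delta\colon I\to A \mid \delta(0)=e\}$; since $g\cdot e = e$ for all $g\in G$ (the action is by group homomorphisms), $P_eA$ is a $G$-space via $(g\delta)(t)=g(\delta(t))$, and it is contractible. Define
$$\Phi\colon A^I \xrightarrow{\ \cong\ } A\times P_eA,\qquad \Phi(\gamma)=\big(\gamma(0),\ t\mapsto \gamma(0)^{-1}\gamma(t)\big),$$
$$\psi\colon A^k \xrightarrow{\ \cong\ } A\times A^{k-1},\qquad \psi(a_1,\dots,a_k)=\big(a_1,\ a_1^{-1}a_2,\ a_2^{-1}a_3,\dots,a_{k-1}^{-1}a_k\big),$$
and let $q\colon P_eA\to A^{k-1}$ be $q(\delta)=\big(\delta(\tfrac1{k-1}),\ \delta(\tfrac1{k-1})^{-1}\delta(\tfrac2{k-1}),\dots,\delta(\tfrac{k-2}{k-1})^{-1}\delta(1)\big)$, where $A\times P_eA$ and $A\times A^{k-1}$ carry the action that is the given one on the $A$-factor and the diagonal one on $P_eA$, respectively $A^{k-1}$. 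The key verifications, all resting on the identity $(g\cdot x)^{-1}(g\cdot y)=g\cdot(x^{-1}y)$ which holds precisely because $G$ acts through group homomorphisms, are that $\Phi$, $\psi$, $q$ are $G$-equivariant and that the square
\[
\begin{tikzcd}
A^I \arrow[r,"\Phi"] \arrow[d,"e_{k,A}"'] & A\times P_eA \arrow[d,"\mathrm{id}_A\times q"] \\
A^k \arrow[r,"\psi"] & A\times A^{k-1}
\end{tikzcd}
\]
commutes (a direct computation, using $\gamma(\tfrac{i}{k-1})^{-1}\gamma(\tfrac{i+1}{k-1})=\delta(\tfrac{i}{k-1})^{-1}\delta(\tfrac{i+1}{k-1})$ for $\delta(t)=\gamma(0)^{-1}\gamma(t)$). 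Since $\Phi$ and $\psi$ are $G$-homeomorphisms, the square and the square obtained by inverting $\Phi$ and $\psi$ are both pullbacks, so applying \Cref{prop:secat-wrtG-ineq} (2) twice gives $\TC_{k,G}^{\#}(A)=\secat_G^{\#}(e_{k,A})=\secat_G^{\#}(\mathrm{id}_A\times q)$.

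Next I would peel off the $\mathrm{id}_A$ factor. Crossing each member of a $G$-invariant cover of $A^{k-1}$ by domains of sections of $q$ with $A$ shows $\secat_G^{\#}(\mathrm{id}_A\times q)\le \secat_G^{\#}(q)$. Conversely, $e\in A^G$, so the inclusion $A^{k-1}\cong\{e\}\times A^{k-1}\hookrightarrow A\times A^{k-1}$ is a $G$-map fitting into a pullback square over $\mathrm{id}_A\times q$ whose total-space map is $q$; \Cref{prop:secat-wrtG-ineq} (2) then yields $\secat_G^{\#}(q)\le\secat_G^{\#}(\mathrm{id}_A\times q)$. Hence $\secat_G^{\#}(\mathrm{id}_A\times q)=\secat_G^{\#}(q)$.

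Finally, $q$ is a fibration (it is the restriction of the $G$-fibration $e_{k,A}$ over the subspace $\psi^{-1}(\{e\}\times A^{k-1})$, transported through the homeomorphisms above), it is surjective because $A$ is path-connected, and its total space $P_eA$ is contractible; so \Cref{prop:secatwrtG leq catwrtG} (ii) gives $\secat_G^{\#}(q)=\ct_G^{\#}(A^{k-1})$. Chaining the three displayed equalities yields $\TC_{k,G}^{\#}(A)=\ct_G^{\#}(A^{k-1})$. I expect the only genuine subtlety to be the equivariance bookkeeping for $\Phi$, $\psi$, $q$ together with the commutativity of the square — conceptually routine but needing the group-homomorphism hypothesis at every step — while the topological points (continuity of $\Phi^{\pm1},\psi^{\pm1}$ via the exponential law, and $q$ being a fibration) are standard.
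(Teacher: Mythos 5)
Your proof is correct, and it is organized differently from the one in the paper, even though both exploit the same underlying algebra (translating by inverses, which is $G$-equivariant exactly because $G$ acts by group homomorphisms). The paper proves the two inequalities separately: the lower bound $\ct_G^{\#}(A^{k-1})\leq \TC_{k,G}^{\#}(A)$ comes from \Cref{prop:ct-leq-tc-wrtG} applied with $H=G$ (the stabilizer of $e$), while the upper bound is obtained by pulling back $e_{A^{k-1}}$ along the difference map $f(a_1,\dots,a_k)=(a_2a_1^{-1},\dots,a_ka_{k-1}^{-1})$ and then comparing the resulting fibration $q$ with $e_{k,A}$ via an explicit concatenation-of-translated-paths map $\phi$ over $A^k$. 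You instead produce a single $G$-equivariant trivialization $e_{k,A}\cong \mathrm{id}_A\times q$ over $\psi\colon A^k\cong A\times A^{k-1}$, strip off the trivial factor (using that $e$ is a $G$-fixed point so that $\{e\}\times A^{k-1}$ is an invariant slice), and finish with \Cref{prop:secatwrtG leq catwrtG}~(ii) since $P_eA$ is contractible and $q$ is surjective by path-connectedness. This yields the equality in one pass and avoids both the concatenation construction and the appeal to \Cref{prop:ct-leq-tc-wrtG}; the price is the extra equivariance bookkeeping for $\Phi$, $\psi$, $q$ and the two-step reduction $\secat_G^{\#}(\mathrm{id}_A\times q)=\secat_G^{\#}(q)$, all of which you handle correctly (in particular, the invariance of $A\times V_i$ and of $\{e\}\times A^{k-1}$ both rest on the group-homomorphism hypothesis, as you note). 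Either argument is complete; yours is arguably the more structural statement, namely that for a topological group the $k$-th free path fibration is $G$-equivariantly a product of the identity on $A$ with a based fibration over $A^{k-1}$.
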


\begin{proof}
Since $G$ acts on $A$ via topological group homomorphisms and on $A^{k-1}$ diagonally, the identity element $\bar{e}=(e,\dots,e)\in A^{k-1}$ becomes a fixed point of $G$-action on $A^{k-1}$.
Thus, the inequality $\ct_{G}^{\#}(A^{k-1})\leq\TC_{k,G}^{\#}(A)$ follows from \Cref{prop:ct-leq-tc-wrtG}. 

We now prove the other inequality. 
Let $f\colon A^k \to A^{k-1}$ be the map given by $f(a_1,\dots,a_k)=(a_2a_1^{-1},a_3a_2^{-1},\dots,a_ka_{k-1}^{-1})$. Note that $f$ is a $G$-map since
\begin{align*}
    f(g (a_1,\dots,a_k)) & = f(ga_1,\dots,ga_k) \\
        & = \left((ga_2)(ga_1)^{-1},(ga_3)(ga_2)^{-1},\dots,(ga_k)(ga_{k-1})^{-1}\right) \\
        & = \left((ga_2)(ga_1^{-1}),(ga_3)(ga_2^{-1}),\dots,(ga_k)(ga_{k-1}^{-1})\right) \\
        & = \left(g(a_2a_1^{-1}),g(a_3a_2^{-1}),\dots,g(a_ka_{k-1}^{-1})\right) \\
        & = g(a_2a_1^{-1},a_3a_2^{-1},\dots,a_ka_{k-1}^{-1})= gf(a_1,\dots,a_k).
\end{align*}
Now consider the pullback diagram
    \[
    \begin{tikzcd}
    Q \arrow[r, "\widetilde{f}"] \arrow[d, "q"'] & P_{\bar{e}}A^{k-1} \arrow[d, "e_{A^{k-1}}"] \\
    A^k \arrow[r, "f"]                               & A^{k-1}                 
    \end{tikzcd},
    \]
where $Q = \{(a_1,\dots,a_k,\gamma) \in A^k \times P_{\bar{e}}A^{k-1} \mid \gamma(1) = (a_2a_1^{-1},a_3a_2^{-1},\dots,a_ka_{k-1}^{-1})\}$ and $q\colon Q \to A^k$ is the projection map. 
Then, by \Cref{prop:secat-wrtG-ineq} (2) and \Cref{cor:new-Gsecat(e_X)=new-Gcat(X)}, it follows $\secat_{G}^{\#}(q) \leq \secat_G^{\#}(e_{A^{k-1}}) = \ct_{G}^{\#}(A^{k-1})$.

Define a map $\phi\colon Q \to A^{I}$ by $\phi(a_1,\dots,a_k, \gamma) = \gamma_1a_1*\gamma_2a_2*\cdots * \gamma_{k-1}a_{k-1}$, where $\gamma = (\gamma_1,\dots,\gamma_{k-1})$, the path $\gamma_ia_i$ is defined as $(\gamma_i a_i)(t):=\gamma_i(t)a_i$ and where $*$ denotes the concatenation of paths. Note that $(\gamma_ia_i)(0)=\gamma_i(0)a_i=ea_i=a_i$ and $(\gamma_ia_i)(1)=\gamma_i(1)a_i=(a_{i+1}a_i^{-1})a_i=a_{i+1}$. Hence, the diagram 
\[
\begin{tikzcd}
Q \arrow[rr, "\phi"] \arrow[rd, "q"'] &     & A^{I} \arrow[ld, "{e_{k,A}}"] \\
                                      & A^k &                              
\end{tikzcd}
\]
commutes. Thus, by \Cref{prop:secat-wrtG-properties} (1), it follows $\TC_{k,G}^{\#}(A) = \secat_{G}^{\#}(e_{k,A}) \leq \secat_{G}^{\#}(q)$. Hence, we get the right inequality $\TC_{k,G}^{\#}(A) \leq \secat_{G}^{\#}(q) \leq \ct_{G}^{\#}(A^{k-1})$.
\end{proof}

\begin{example}
\label{example:TC(S^n)-wrt-G}
    Suppose that $G$ is a finite group acting on $S^n$, where $n \geq 1$. Using \Cref{prop:new-TC-properties}, \Cref{prop:ct-leq-strongtc-wrtG}, \Cref{cor:new-Gcat-prod-ineq-for-X^k}, and \Cref{example:new-Gcat(S^n)}, we get
    $$
        \TC_{k,G}^{\#}(S^n) \leq \TC_{k,G}^{\#,*}(S^n) \leq \ct_{G^k}^{\#}((S^n)^k) \leq k(\ct_{G}^{\#}(S^n)-1)+1 = k+1.
    $$
    Moreover, by \Cref{prop:new-TC-properties}, we also have the inequality $\TC_k(S^n) \leq \TC_{k,G}^{\#}(S^n)$. As $\TC_{k}(S^n)=k$ for $n$ odd and $\TC_k(S^n)=k+1$ for $n$ even, see \cite[Section 4]{RUD2010}, we get 
    $$
        \TC_{k,G}^{\#}(S^n) = \TC_{k,G}^{\#,*}(S^n) = k+1
    $$
    for $n$ even, and
    $$
       k \leq \TC_{k,G}^{\#}(S^n) \leq \TC_{k,G}^{\#,*}(S^n) \leq k+1
    $$
    for $n$ odd.

    If $\tau$ is the antipodal involution on $S^n$ given by $\tau(z) = -z$, then $\TC_{2,\left< \tau \right>}^{\#}(S^n)=2$ for $n$ odd. This follows because the sets $U_1 = \{(x,y) \in S^n \times S^n \mid x \neq -y\}$ and $U_2 = \{(x,y) \in S^n \times S^n \mid x \neq y \}$ are $\left< \tau \right>$-invariant open and admit continuous motion planners, see \cite[Theorem 8]{FarberTC}.
    
    If $\sigma$ is the conjugation involution on $S^1 \subset \mathbb{C}$ given by $\sigma(z)=\bar{z}$, then $\left<\sigma \right>$ acts on $S^1$ via group homomorphisms. Hence, by \Cref{theorem:TC-wrt-G=cat-wrt-G for topological group}, \Cref{prop:cat-wrtG-properties}, \Cref{cor:new-Gcat-prod-ineq-for-X^k}, and \Cref{example:new-Gcat(S^n)}, we have
    $$
        \TC^{\#}_{k,\left<\sigma\right>}(S^1)
            = \ct^{\#}_{\left<\sigma\right>}((S^1)^{k-1}) 
            \leq \ct^{\#}_{\left< \sigma \right>^{k-1}}((S^1)^{k-1}) 
            \leq (k-1)(\ct_{\left< \sigma \right>}^{\#}(S^1)-1)+1 
            = k.
    $$
    As $k = \TC_{k}(S^1) \leq \TC_{k,\left< \sigma \right>}^{\#}(S^1)$, we get $\TC_{k,\left< \sigma \right>}^{\#}(S^1) = k$.
    
Similarly, if $\eta$ is the involution on $S^3 \subset \mathbb{R}^4$ given by $\eta(x_0,x_1,x_2,x_3)=(x_0,-x_1,x_2,-x_3)$, then $\left<\eta \right>$ acts on $S^3$ via group homomorphisms. Hence, similar computations like above show that $\TC_{k,\left< \eta \right>}^{\#}(S^3) = k$.
\end{example}

\begin{remark}
\label{remark:TC-wrt-G < TC_G}
    The previous example shows that the inequalities $\TC_{k,G}^{\#}(X) \leq  \TC_{k,G}(X)$ and $\TC_{k,G}^{\#,*}(X) \leq  \TC_{k,G}^{*}(X)$ can be strict. Since $\TC_{k,\left< \sigma \right>}^{\#}(S^1) = k$ and $\TC_{k,\left< \sigma \right>}^{\#,*}(S^1) \leq k+1$ but $\TC_{k,\left<\sigma\right>}(S^1)$ and $\TC_{k,\left<\sigma\right>}^{*}(S^1)$ are infinite as $S^1$ is not $\left< \sigma \right>$-connected, see \cite[Proposition 3.14 (2)]{BaySarkarheqtc} and \cite[Proposition 5.6 (c)]{PaulSen}.
\end{remark}

\begin{example}
\label{example:TC(S^n)-wrt-S^1}
    Let $S^1 \subseteq SO(n+1)$ act on $S^n \subset \R^{n+1}$ by matrix multiplication as in \Cref{example: cat(S^n)-wrt-S^1}. 
    If $n = 1$, then by \Cref{example: TC(G)-wrt-G}, we have $\TC_{k,S^1}^{\#,*}(S^1)=\infty$. 
    Note that $\TC_{2,S^1}^{\#}(S^1)=2$ since $2= \TC_{2}(S^1) \leq \TC_{2,S^1}^{\#}(S^1) \leq \TC_{2,S^1}(S^1)=2$, see \cite[Theorem 8]{FarberTC} and \cite[Example 5.10]{colmangranteqtc}.
    
    If $n \geq 2$, then by \Cref{example: cat(S^n)-wrt-S^1}, we have $\ct_{S^1}^{\#}(S^n)=2$. Following similar computations as \Cref{example:TC(S^n)-wrt-G}, we get 
    $$
        \TC_{k,S^1}^{\#}(S^n) = \TC_{k,S^1}^{\#,*}(S^n) = k+1 
    $$
    for $n$ even, and
    $$
       k \leq \TC_{k,S^1}^{\#}(S^n) \leq \TC_{k,S^1}^{\#,*}(S^n) \leq k+1
    $$
    for $n$ odd and $n \neq 1$.
\end{example}

In the following proposition, we obtain the dimension-connectivity upper bound for the sequential equivariant topological complexity.

\begin{proposition}\label{prop:dim-conn-ub}
Let $X$ be a
$G$-CW-complex of dimension at least 1 such that $X^H$ is $m$-connected for all subgroups $H \leq G$. Then 
 \[\TC_{k,G}^{\#}(X) \leq \TC_{k,G}(X) < \frac{k \dim(X)+1}{m+1} + 1.
 \]     
\end{proposition}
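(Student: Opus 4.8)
The first inequality $\TC_{k,G}^{\#}(X)\le\TC_{k,G}(X)$ is contained in \Cref{prop:new-TC-properties}(2), so it remains to prove the strict inequality $\TC_{k,G}(X)<\frac{k\dim(X)+1}{m+1}+1$. This is the sequential equivariant analogue of the dimension-connectivity estimate for topological complexity, and my plan is to deduce it by comparing $\TC_{k,G}(X)$ with the equivariant Lusternik-Schnirelmann category $\ct_G(X^k)$, where $G$ acts diagonally on $X^k$, and then invoking an equivariant dimension-connectivity bound for $\ct_G$; alternatively the inequality can be quoted directly, cf. \cite{BaySarkarheqtc}.

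The first step is to show $\TC_{k,G}(X)=\secat_G(e_{k,X})\le\ct_G(X^k)$ with the diagonal action. Let $U\subseteq X^k$ be a $G$-invariant open set whose inclusion is $G$-homotopic, via a $G$-homotopy $H\colon U\times I\to X^k$ with $H_0$ the inclusion, to a $G$-map $H_1\colon U\to\mathcal{O}$ with image in a single orbit $\mathcal{O}=Gb$, $b=(b_1,\dots,b_k)$. A $G$-equivariant section of $e_{k,X}$ over $\mathcal{O}\cong G/G_b$ amounts to a path $\gamma\in X^I$ with $\gamma(i/(k-1))=b_{i+1}$ for $0\le i\le k-1$ that is fixed by $G_b=\bigcap_j G_{b_j}$, i.e. a path in $X^{G_b}$ passing through $b_1,\dots,b_k$ in order; such a path exists because each $b_j\in X^{G_b}$ and $X^{G_b}$ is path-connected (being $m$-connected with $m\ge 0$). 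Composing such a section $\sigma$ with $H_1$ yields a $G$-map $\sigma\circ H_1\colon U\to X^I$ lifting $H_1$; feeding it as initial data into the $G$-homotopy lifting property of the $G$-fibration $e_{k,X}$ (\cite[Lemma 3.5]{BaySarkarheqtc}) applied to $H$ run in reverse produces a $G$-map $U\to X^I$ lifting the inclusion $U\hookrightarrow X^k$, i.e. a $G$-equivariant section of $e_{k,X}$ over $U$. Thus a $G$-categorical open cover of $X^k$ of size $n$ produces a $G$-invariant open cover of $X^k$ of size $n$ with equivariant sections of $e_{k,X}$, whence $\TC_{k,G}(X)\le\ct_G(X^k)$.

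The second step bounds $\ct_G(X^k)$. With the diagonal action, $X^k$ is a $G$-CW-complex of dimension $k\dim(X)$, and for every closed subgroup $H\le G$ one has $(X^k)^H=(X^H)^k$, which is $m$-connected by the K\"unneth and Hurewicz theorems since $X^H$ is $m$-connected. The equivariant analogue of the Lusternik-Schnirelmann dimension-connectivity estimate then gives $\ct_G(Y)<\frac{\dim(Y)+1}{m+1}+1$ for any $G$-CW-complex $Y$ all of whose fixed point sets are $m$-connected; applying this to $Y=X^k$ and combining with the first step completes the proof. The hypothesis $\dim(X)\ge 1$ just excludes the trivial case in which $X$ is a point.

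The step I expect to be the main obstacle is this equivariant dimension-connectivity estimate for $\ct_G(X^k)$. Carrying out equivariant obstruction theory directly on the $G$-fibration $e_{k,X}$ only yields $\TC_{k,G}(X)\le\frac{k\dim(X)}{m}+1$, because the fibre of $e_{k,X}$ is merely $(m-1)$-connected; improving $m$ to $m+1$ in the denominator requires a Ganea/Whitehead-type model for category performed equivariantly (so that the relevant fibre is an iterated join of $(m-1)$-connected spaces, hence far more highly connected), together with bookkeeping of the Bredon-cohomological obstruction groups over the various orbit types. If one prefers not to reprove this, the entire second inequality may instead be cited from the literature on sequential equivariant topological complexity.
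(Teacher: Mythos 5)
Your overall strategy is sound and the first inequality is handled exactly as in the paper (via \Cref{prop:new-TC-properties}(2)), but your route to the second inequality differs from the paper's. The paper applies an equivariant Schwarz-type dimension--connectivity theorem for $\secat_G$ of a Serre $G$-fibration (\cite[Theorem 3.5]{grant2019symmetrized}) \emph{directly} to $e_{k,X}$: since $(X^I)^H=(X^H)^I$ and $(X^k)^H=(X^H)^k$, the restricted fibration $e_{k,X}^H$ is $e_{k,X^H}$, whose fibre $(\Omega X^H)^{k-1}$ is $(m-1)$-connected, and the cited theorem then gives $\TC_{k,G}(X)<\frac{k\dim(X)+1}{m+1}+1$ in one step. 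You instead factor through $\ct_G(X^k)$ with the diagonal action and then invoke an equivariant dimension--connectivity bound for $\ct_G$. Your first step ($\TC_{k,G}(X)\le\ct_G(X^k)$ via a $G_b$-fixed path through $b_1,\dots,b_k$ in the path-connected space $X^{G_b}$, followed by the $G$-homotopy lifting property) is correct and is essentially the known comparison from \cite{colmangranteqtc,BaySarkarheqtc}; your second step is plausible but is left entirely to an unnamed citation, so as written the quantitative heart of the proposition is not established. The detour also costs you nothing in the bound, but it buys nothing either.

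More importantly, the obstacle you flag in your last paragraph is not actually there, and your estimate $\frac{k\dim(X)}{m}+1$ rests on a miscalculation. The Schwarz/fibrewise-join argument applied to a fibration with $c$-connected fibre over a $d$-dimensional base gives $\secat<\frac{d+1}{c+2}+1$ (the $n$-fold fibrewise join of a $c$-connected fibre is $(n(c+2)-2)$-connected, so a section of the joined fibration exists once $d\le n(c+2)-1$); with $c=m-1$ and $d=k\dim(X)$ this is exactly $\frac{k\dim(X)+1}{m+1}+1$ --- the denominator is $c+2=m+1$, not $c+1=m$. The equivariant version of this join argument, carried out over the orbit category with the fixed-point fibres $(\Omega X^H)^{k-1}$, is precisely the content of the theorem the paper cites, so no Ganea-type detour through $\ct_G$ is needed to recover the $m+1$ in the denominator.
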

\begin{proof}
 Since $e_{k,X}$ is a $G$-fibration, it is also a Serre $G$-fibration. 
 For a closed subgroup $H$ of $G$, we have $(X^I)^H=(X^H)^I$ and $(X^k)^H=(X^H)^k$. Thus, the fibration $e_{k,X}^H\colon (X^I)^H\to (X^k)^H$ coincides with the fibration $e_{k,X^H}\colon (X^H)^I\to (X^H)^k$.
 Note that the fibre of   $e_{k,X^H}$ is $(\Omega X^H)^{k-1}$ which is $(m-1)$-connected.  Therefore, from \cite[Theorem 3.5]{grant2019symmetrized}, we obtain the right inequality.
 The left inequality follows from \Cref{prop:new-TC-properties} (2).
\end{proof}

    




\section{Sequential topological complexity of fibre bundles}
\label{sec:seqtc-fib-bundles}

Let $F \hookrightarrow E \xrightarrow{p} B$ be a fibre bundle. In \cite[Theorem 2.2]{DaundSarkargpps}, the second and third author gave an additive upper bound on the topological complexity of $E$. 
In this section, we generalize that result in \Cref{additive inequality for TC for fibre bundles} to the sequential setting and obtain other additive upper bounds in terms of invariants introduced in the previous section. 

\begin{definition}
\label{def: induced trivialization for TC}
    Let $F \hookrightarrow E \xrightarrow{p} B$ be a fibre bundle and $U$ be a subset of $B^k$. Suppose that $s \colon U \to B^I$ is a sequential motion planner over $U$ such that $s(U)\subseteq  R^I$ where $f \colon p^{-1}(R) \to R \times F$ is a local trivialization of $F \hookrightarrow E \xrightarrow{p} B$. Then $f^k \colon (p^k)^{-1}(R^k) \to R^k \times F^k$ gives a local trivialization of $F^k \hookrightarrow E^k \xrightarrow{p^k} B^k$ over $R^k$. As $s(U) \subseteq  R^I$, it follows that $U \subseteq R^k$. Hence, we obtain a local trivialization
    $$
    h: = \left.f^k\right|_{(p^{k})^{-1}(U)} \colon (p^k)^{-1}(U) \to U \times F^k
    $$
of the fibre bundle $F^k \hookrightarrow E^k \xrightarrow{p^k} B^k$ over $U$. We will call this local trivialization $h$ the induced trivialization of the fibre bundle $F^k \hookrightarrow E^k \xrightarrow{p^k} B^k$ with respect to the sequential motion planner $s$ and the trivialization $f$.
\end{definition}

\begin{theorem}
\label{additive inequality for TC for fibre bundles}
    Let $F \hookrightarrow E \xrightarrow{p} B$ be a fibre bundle where $E^k$ is a completely normal space. Let $\{U_1, \dots, U_m\}$ and  $\{V_1,\dots,V_n\}$ be open covers of $B^k$ and $F^k$ with sequential motion planners $s_i \colon U_i \to B^I$ and $t_j \colon V_j \to F^I$ respectively. If
    \begin{itemize}
        \item[(i)] there exists a cover $\{R_1, \dots, R_m\}$ of $B$ with local trivializations $f_i \colon p^{-1}(R_i) \to R_i \times F$ of $F \hookrightarrow E \xrightarrow{p} B$  such that $s_i(U_i)\subseteq  R_i^I$, and
        \item[(ii)] the induced local trivializations $h_i \colon (p^{k})^{-1}(U_i) \to U_i \times F^k$ with respect to $s_i$ and $R_i$ extend to local trivializations over $\overline{U_i}$ such that $(\overline{U_i} \cap \overline{U_{i'}}) \times V_j$ is invariant under $h_{i'} \circ h_i^{-1}$ for all $1 \leq i , i' \leq m$ and $1 \leq j \leq n$,
    \end{itemize}
then $\TC_k(E) \leq m+ n-1$.
\end{theorem}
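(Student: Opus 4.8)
The plan is to first produce, for every pair $(i,j)$ with $1\le i\le m$ and $1\le j\le n$, an open set $\widetilde W_{i,j}\subseteq E^k$ carrying a continuous section of $e_{k,E}$, and then to glue these $mn$ sets along the ``anti-diagonals'' $i+j=\mathrm{const}$ into $m+n-1$ open sets that still carry sections, using complete normality of $E^k$ exactly as in the proof of \cite[Theorem~1.37]{CLOT} and of \Cref{prop:new-Gsecat-prod-ineq}. Throughout write $q:=p^k\colon E^k\to B^k$ and let $h_i\colon q^{-1}(U_i)\xrightarrow{\ \cong\ }U_i\times F^k$ be the induced trivialization from \Cref{def: induced trivialization for TC} (so $h_i=f_i^k|_{q^{-1}(U_i)}$), with extension $\bar h_i$ over $\overline{U_i}$ supplied by hypothesis~(ii). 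Set $\widetilde W_{i,j}:=h_i^{-1}(U_i\times V_j)$; since $\{U_i\}$ covers $B^k$ and $\{V_j\}$ covers $F^k$, these sets cover $E^k$.

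Next I would build the section over $\widetilde W_{i,j}$. Given $(e_1,\dots,e_k)\in\widetilde W_{i,j}$, put $(b_1,\dots,b_k):=(p(e_1),\dots,p(e_k))\in U_i$ and $\beta:=s_i(b_1,\dots,b_k)\in B^I$; by~(i) the path $\beta$ stays in $R_i$, so $f_i(e_\ell)=(b_\ell,y_\ell)$ is defined for each $\ell$, $(y_1,\dots,y_k)\in V_j$, and $\delta:=t_j(y_1,\dots,y_k)\in F^I$ is defined. On the $\ell$-th subinterval $[\tfrac{\ell-1}{k-1},\tfrac{\ell}{k-1}]$ I would run $t\mapsto f_i^{-1}(\beta(t),y_\ell)$ on the first half (moving $e_\ell$ to $f_i^{-1}(b_{\ell+1},y_\ell)$) and $t\mapsto f_i^{-1}(b_{\ell+1},\delta(t))$ on the second half (moving $f_i^{-1}(b_{\ell+1},y_\ell)$ to $e_{\ell+1}$), suitably reparametrised; concatenating over $\ell=1,\dots,k-1$ gives a path through $e_1,\dots,e_k$ at the prescribed times $0,\tfrac1{k-1},\dots,1$, depending continuously on $(e_1,\dots,e_k)$ in the compact-open topology. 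This already yields the crude bound $\TC_k(E)\le mn$.

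The heart of the argument is the merging. I would set $X_i:=q^{-1}(U_1)\cup\dots\cup q^{-1}(U_i)$ for $1\le i\le m$, with $X_i:=E^k$ for $i\ge m$ and $X_0:=\emptyset$, and introduce a ``fibrewise'' filtration $\Phi_j:=\bigcup_i h_i^{-1}(U_i\times(V_1\cup\dots\cup V_j))$ for $1\le j\le n$, with $\Phi_j:=E^k$ for $j\ge n$ and $\Phi_0:=\emptyset$; both are increasing filtrations of $E^k$ by open sets. For $1\le l\le m+n-1$ put $Z_{i,l}:=(X_i\setminus X_{i-1})\cap(\Phi_{l-i+1}\setminus\Phi_{l-i})$. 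As in \cite[Theorem~1.37]{CLOT}, openness of the $X_i$ and $\Phi_j$ forces $Z_{i,l}\cap\overline{Z_{i',l}}=\emptyset$ for $i\ne i'$, so repeated use of complete normality of $E^k$ gives pairwise disjoint open sets $W_{i,l}\supseteq Z_{i,l}$. This is exactly where hypothesis~(ii) is used: the invariance of $(\overline{U_i}\cap\overline{U_{i'}})\times V_j$ under the transition maps $\bar h_{i'}\circ\bar h_i^{-1}$ makes the fibre filtration $\{\Phi_j\}$ coherent across the different charts, which is what is needed to prove the containment $Z_{i,l}\subseteq\widetilde W_{i,\,l-i+1}$. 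Then $C_{i,l}:=W_{i,l}\cap\widetilde W_{i,\,l-i+1}$ carries the section constructed above, $C_l:=\coprod_i C_{i,l}$ is open and carries a section, and an index count (each $x\in E^k$ lies in $Z_{a,\,a+b-1}$, where $a$, $b$ are the minimal indices with $x\in X_a$, $x\in\Phi_b$) shows $\bigcup_{l=1}^{m+n-1}C_l=E^k$; hence $\TC_k(E)=\secat(e_{k,E})\le m+n-1$.

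I expect the bookkeeping of the last step to be the main obstacle: verifying that the fibre filtration $\{\Phi_j\}$ is honestly well behaved across the charts $h_i$, so that both $Z_{i,l}\cap\overline{Z_{i',l}}=\emptyset$ and $Z_{i,l}\subseteq\widetilde W_{i,\,l-i+1}$ hold. Condition~(ii), including the extension of the trivializations over $\overline{U_i}$, is tailored precisely for this; once it is in place, the combinatorial skeleton is identical to that of \Cref{prop:new-Gsecat-prod-ineq}. The local-section construction is routine, but its continuity should be written out with a little care.
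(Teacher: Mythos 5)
Your proposal is correct, and its combinatorial skeleton (the anti-diagonal merging of the $mn$ basic sets into $m+n-1$ pieces via complete normality, with sections obtained by composing $h_i$ with $s_i\times t_j$ and $f_i^{-1}$) is the same as the paper's. The one genuine difference lies in how the separation $Z_{i,l}\cap\overline{Z_{i',l}}=\emptyset$ is obtained, and correspondingly in where hypothesis (ii) enters. The paper defines $Z_{i,l}$ in the coordinates of $B^k\times F^k$ and pulls back through the individual charts $h_i$; comparing the closure $\overline{h_{i'}^{-1}(Z_{i',l})}$ with $h_i^{-1}(Z_{i,l})$ then requires the extensions of the $h_i$ over $\overline{U_i}$ together with the invariance of $(\overline{U_i}\cap\overline{U_{i'}})\times V_j$ under the transition maps. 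You instead introduce two globally defined increasing \emph{open} filtrations of $E^k$, namely $X_i=(p^k)^{-1}(U_1\cup\dots\cup U_i)$ and the fibrewise $\Phi_j$, so that separation of the anti-diagonal blocks is immediate from openness (exactly as in the classical LS-category product argument), with no closures of charts needed; hypothesis (ii) is used instead to show that $\Phi_j$ is chart-coherent, i.e.\ that $\Phi_j\cap (p^k)^{-1}(U_i)=h_i^{-1}(U_i\times(V_1\cup\dots\cup V_j))$, which is precisely what gives $Z_{i,l}\subseteq h_i^{-1}(U_i\times V_{l-i+1})$ and hence a section on $C_{i,l}$. This is a clean reorganization, and it shows you only need invariance of $(U_i\cap U_{i'})\times V_j$ under the transition maps over the \emph{open} overlaps — the extension over $\overline{U_i}$ plays no role in your version — so you in fact prove the theorem under a slightly weaker form of hypothesis (ii). Your local section (base motion followed by fibre motion on each subinterval) differs from the paper's (which runs $t\mapsto f_i^{-1}(\beta(t),\delta(t))$ simultaneously), but both interpolate the required points and are continuous; this is immaterial. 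When writing it up, do spell out the coherence of $\Phi_j$, since that is the only place the structural hypothesis is actually consumed.
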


\begin{proof}
Let $\{X_i\}_{i=0}^{m+n-1}$ be the sequence given by $X_0 = \emptyset$, $X_i =U_1 \cup \dots \cup U_i$ for $1\leq i \leq m$ and $X_i=B^k$ for $m \leq i \leq m+n-1$. Similarly, let $\{Y_j\}_{j=0}^{m+n-1}$ be the sequence given by $Y_0 = \emptyset$, $Y_j = V_1 \cup \dots \cup V_j$ for $1\leq j \leq n$ and $Y_j=F^k$ for $n \leq j \leq m+n-1$.

For each $l \in \{1,2, \dots, m+n-1\}$, define $Z_{i,l}:= (X_i\setminus X_{i-1}) \times (Y_{l-i+1} \setminus Y_{l-i})$ for $1 \leq i \leq m+n-1$. Note that $Z_{i,l}=\emptyset$ for $m+1 \leq i \leq m+n-1$. Define
    \begin{equation}
    \label{W_l for TC}
        Q_l:=\bigcup_{i=1}^{l} h_i^{-1}(Z_{i,l})
    \end{equation}
for $1\leq l \leq m+n-1$. Note that $h_i$'s are defined from $1 \leq i \leq m$, hence the term $h_i^{-1}(Z_{i,l})$ in \eqref{W_l for TC} are assumed to be the empty set for each $m+1 \leq i \leq m+n-1$ as $Z_{i,l}=\emptyset$. We claim that the terms in the right hand side of \eqref{W_l for TC} are separated, i.e.,
    $$
        h_i^{-1}(Z_{i,l}) \cap \overline{h_{i'}^{-1}(Z_{i',l})} = \emptyset
    $$
for $i\neq i'$, where the closure $\overline{h_{i'}^{-1}(Z_{i',l})}$ is taken in $E^k$. 
Suppose that $z \in h_i^{-1}(Z_{i,l}) \cap \overline{h_{i'}^{-1}(Z_{i',l})}$. 
Note that $Z_{i',l} \subseteq U_{i'} \times V_{l-i'+1} \subseteq U_{i'} \times F^k$ implies $\overline{Z_{i',l}} \subseteq \overline{U_{i'} \times F^k} \subseteq \overline{U_{i'}} \times \overline{F^k} = \overline{U_{i'}} \times F^k$. 
Since $h_{i'}$ extends to a local trivialization over $\overline{U_{i'}}$, we have $h_{i'}^{-1}(\overline{Z_{i',l}})$ is a closed subset of $(p^k)^{-1}(\overline{U_{i'}})$, which is closed in $E^k$.
Hence, $h_{i'}^{-1}(\overline{Z_{i',l}})$ is a closed subset in $E^k$ which contains $h_{i'}^{-1}(Z_{i',l})$.
This implies $\overline{h_{i'}^{-1}(Z_{i',l})} \subseteq h_{i'}^{-1}(\overline{Z_{i',l}})$. 
Hence,
\begin{align*}
    h_{i'}(z)   \in h_{i'}(\overline{h_{i'}^{-1}(Z_{i',l})}) 
                \subseteq h_{i'}(h_{i'}^{-1}(\overline{Z_{i',l}}))
                & = \overline{Z_{i',l}} \\
                & = \overline{(X_{i'}\setminus X_{i'-1}) \times (Y_{l-i'+1} \setminus Y_{l-i'})}\\
                & \subseteq \overline{(X_{i'}\setminus X_{i'-1})} \times \overline{(Y_{l-i'+1} \setminus Y_{l-i'})}.
\end{align*} 

Suppose that $h_{i'}(z)=(x_{i'},y_{i'})$, where $x_{i'}=p^k(z)=x$ (say). 
As $X_{i'-1}$ is open, the following inclusions
$$
   x \in \overline{(X_{i'}\setminus X_{i'-1})} 
        = \overline{(X_{i'} \cap X_{i'-1}^c)}
        \subseteq \overline{X_{i'}} \cap \overline{X_{i'-1}^c}
        = \overline{X_{i'}} \cap X_{i'-1}^c
$$
imply $x \notin X_{i'-1}$.
Similarly, $y_{i'} \not\in {Y_{l-i'}}$. 

Suppose that $h_{i}(z)=(x_{i},y_{i})$, where $x_{i}=p^k(z)=x$. Then 
$$
(x,y_{i})=(x_{i},y_{i})=h_{i}(z) \in h_{i}(h_i^{-1}(Z_{i,l})) = Z_{i,l} \subseteq U_i \times V_{l-i+1}
$$
and $(x,y_{i'})=(x_{i'},y_{i'})=h_{i'}(z) \in \overline{(X_{i'}\setminus X_{i'-1})} \times \overline{(Y_{l-i'+1} \setminus Y_{l-i'})} \subseteq \overline{U_{i'}} \times \overline{(Y_{l-i'+1} \setminus Y_{l-i'})}$ implies $(x,y_i) \in (\overline{U_i} \cap \overline{U_{i'}}) \times V_{l-i+1}$. Hence $(x,y_{i'}) = h_{i'}(h_{i}^{-1}(x,y_i)) \in (\overline{U_i} \cap \overline{U_{i'}}) \times V_{l-i+1}$ as $(\overline{U_i} \cap \overline{U_{i'}}) \times V_{l-i+1}$ is preserved under $h_{i'}\circ h_{i}^{-1}$. In particular, we have $y_{i'} \in V_{l-i+1}$. 

If $i<i'$,  then $i \leq i'-1$ implies $X_{i} \subseteq X_{i'-1}$. This is a contradiction since $x\in X_i \setminus X_{i-1} \subseteq X_i$ but $x\not\in X_{i'-1}$. If $i'< i$, then $l-i' \geq l-i+1$ implies $Y_{l-i+1} \subseteq Y_{l-i'}$. This is a contradiction since $y_{i'} \in V_{l-i+1} \subseteq Y_{l-i+1}$ but $y_{i'} \not\in Y_{l-i'}$.
    
Then, by using the property of complete normality for the space $E^k$ repeatedly, there exist open sets $W_{i,l}$ containing $h_i^{-1}(Z_{i,l})$ such that $W_{i,l} \cap W_{i',l} = \emptyset$ for $i \neq i'$. Let
$$
C_{i,l} := W_{i,l} \cap h_{i}^{-1}(U_i \times V_{l-i+1}).
$$
It is clear that $C_{i,l}$ satisfies $h_i^{-1}(Z_{i,l}) \subseteq C_{i,l} \subseteq h_{i}^{-1}(U_i \times V_{l-i+1})$ and $C_{i,l} \cap C_{i',l} = \emptyset$ for $i \neq i'$. Let 
$$
    C_l:=\coprod_{i=1}^{l} C_{i,l}.
$$
for $1\leq l \leq m+n-1$. Note that $Q_l \subseteq C_l$. Since $h_i$ are induced by $f_i$, the composition of
\[
\begin{tikzcd}
    C_{i,l} \arrow[r, hook]  & h_{i}^{-1}(U_i \times V_{l-i+1}) \arrow[r, "h_i"] &  U_i \times V_{{l-i+1}} \arrow[r,"s_i \times t_{l-i+1}"] & R_i^{I} \times F^I ,
\end{tikzcd} 
\]    
and
\[
\begin{tikzcd}
   R_i^{I} \times F^I \arrow[r, "\cong"] & (R_i \times F)^I \arrow[r, "(f_i^{-1})^I"] \arrow[r, swap, "\cong"] & (p^{-1}(R_i))^I \arrow[r, hook] &  E^I \
\end{tikzcd} 
\]
gives a sequential motion planner on $C_{i,l}$ for each $1 \leq i \leq m$ and $1 \leq l \leq m+n-1$. As $C_l$ is a disjoint union of open sets $C_{i,l}$, we get a sequential motion planner on $C_l$ for each $1\leq l \leq m+n-1$. Furthermore,
     \begin{align*}
        \bigcup_{l=1}^{m+n-1} Q_l & = \bigcup_{l=1}^{m+n-1} \bigcup_{i=1}^{l} h_i^{-1}((X_i\setminus X_{i-1}) \times (Y_{l-i+1} \setminus Y_{l-i}))\\
        & = \bigcup_{i=1}^{m+n-1} \bigcup_{l=i}^{m+n-1}  h_i^{-1}((X_i\setminus X_{i-1}) \times (Y_{l-i+1} \setminus Y_{l-i}))\\
        & = \bigcup_{i=1}^{m+n-1} h_i^{-1}((X_i\setminus X_{i-1}) \times Y_{m+n-i})\\
        & = \left(\bigcup_{i=1}^{m} h_i^{-1}((X_i\setminus X_{i-1}) \times Y_{m+n-i})\right) \cup \left(\bigcup_{i=m+1}^{m+n-1} h_i^{-1}((X_i\setminus X_{i-1}) \times Y_{m+n-i})\right)\\
        & = \left(\bigcup_{i=1}^{m} h_i^{-1}((X_i\setminus X_{i-1}) \times F)\right) \cup \left(\bigcup_{i=m+1}^{m+n-1} h_i^{-1}(\emptyset \times Y_{m+n-i})\right)\\
        & = \bigcup_{i=1}^{m} h_i^{-1}((X_i\setminus X_{i-1}) \times F) \\
        & = \bigcup_{i=1}^{m} (p^k)^{-1}(X_i\setminus X_{i-1}) \\
        & =  (p^k)^{-1}(X_m) \\
        & =  (p^k)^{-1}(B^k) \\
        & = E^k.
    \end{align*} 

Hence, $E^k = \bigcup_{l=1}^{m+n-1} Q_l \subseteq \bigcup_{l=1}^{m+n-1} C_l$ implies $\{C_l\}_{i=1}^{m+n-1}$ is an open cover of $E^k$ with sequential motion planners. Hence, $\TC_k(E) \leq m+n-1$.
\end{proof}

\begin{remark}
\hfill
    \begin{itemize}
        \item[(i)] In \Cref{additive inequality for TC for fibre bundles}, the local trivializations $h_i$'s have to be the one defined in \Cref{def: induced trivialization for TC}, otherwise the sequential motion planners for $E$ defined via compositions in \Cref{additive inequality for TC for fibre bundles} can fail to be sections of $e_{k,E}$ which was an oversight in \cite[Theorem 2.2]{DaundSarkargpps}.

        \item[(ii)] In \Cref{additive inequality for TC for fibre bundles}, it is enough to assume that $(U_i \times \overline{U_{i'}}) \times V_j$ is invariant under $h_{i'} \circ h_i^{-1}$ for all $1 \leq i,i' \leq m$ and $1 \leq j \leq n$.
        
        \item[(iii)] Suppose that $F \hookrightarrow E \xrightarrow{p} B$ is the trivial fibre bundle. 
        Suppose that $\TC_k(B) = m$ and $\TC_k(F) = n$ with sequential motion planning covers $\{U_1,\dots,U_m\}$ and $\{V_1. \dots, V_n\}$. 
        If $f\colon E \to B \times F$ is a global trivialization for $p$, then we can take $f_i$'s to be $f$ for all $i$. 
        Then the condition (i) of \Cref{additive inequality for TC for fibre bundles} is satisfied since $R_i=B^k$ and the condition (ii) is satisfied since $h_{i'} \circ h_i^{-1}$ is the identity map. Hence, $\TC_k(E) \leq \TC_k(B) + \TC_k(F) -1$ and we recover the product inequality in \cite[Proposition 3.11]{Rudyak2014}.
    \end{itemize}
\end{remark}

\begin{remark}
\hfill
\label{remark for additive inequality of TC for fibre bundles}
\begin{itemize}
        \item[(i)] If the $R_i$'s in \Cref{additive inequality for TC for fibre bundles} are closed, then we have $\overline{U_i} \subseteq R_i^k$. Hence, $h_i$'s can be extended to local trivializations over $\overline{U_i}$ as we can take the extension to be $\left .f_i^k\right|_{(p^{k})^{-1}(\overline{U_i})}$.
        \item[(ii)] Suppose that $G$ is the structure group of the fibre bundle $F \hookrightarrow E \to B$, and the local trivializations $f_i \colon p^{-1}(R_i) \to R_i \times F$ in Theorem \ref{additive inequality for TC for fibre bundles} form a $G$-atlas of $F \hookrightarrow E \to B$, i.e, the transition maps $f_i \circ f_{i'}^{-1} \colon (R_i \cap R_{i'}) \times F \to (R_i \cap R_{i'}) \times F$ are given by 
        $$
            (f_i \circ f_{i'}^{-1})(x,y) = (x,t_{ii'}(x)y)
        $$
        where $t_{ii'} \colon R_{i}\cap R_{i'}\to G$ are continuous maps. 
        Then the transition maps $$f^k_i \circ (f^k_{i'})^{-1} \colon (R^k_i \cap R^k_{i'}) \times F^k \to (R^k_i \cap R^k_{i'}) \times F^k$$ are given by 
        $$
            (f^k_i \circ (f^k_{i'})^{-1})(x_1,\dots,x_k,y_1,\dots,y_k) = (x_1,\dots,x_k,t_{ii'}(x_1)y_1,\dots,t_{ii'}(x_k)y_k).
        $$
        In particular, if $V$ is a $G^k$-invariant subset of $F^k$, then $f^k_i \circ (f^k_{i'})^{-1}$ preserves $W \times V$ where $W$ is any subset of $R^k_i \cap R^k_{i'}$.
    \end{itemize}
\end{remark}

\begin{corollary}\label{cor:seqtc-ub-using-gcat}
Let $F \hookrightarrow E \xrightarrow{p} B$ be a fibre bundle with structure group $G$ where $E^k$ is a completely normal space. 
Let $\{U_1, \dots, U_m\}$ be an open cover of $B^k$ with sequential motion planners $s_i \colon U_i \to B^I$.
If there exists a closed cover $\{R_1, \dots, R_m\}$ of $B$ with local trivializations $f_i \colon p^{-1}(R_i) \to R_i \times F$ of $F \hookrightarrow E \xrightarrow{p} B$ such that $f_i$'s form a $G$-atlas of $F \hookrightarrow E \xrightarrow{p} B$ and $s_i(U_i)\subseteq  R_i^I$, then 
$$
\TC_k(E) \leq m+\TC_{k,G}^{\#,*}(F)-1.
$$
In particular, $\TC_k(E) \leq m+\ct_{G^k}^{\#}(F^k)-1$ and $\TC_k(E) \leq m+\TC_{k,G}^*(F)-1$.
\end{corollary}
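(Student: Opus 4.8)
The plan is to deduce this directly from \Cref{additive inequality for TC for fibre bundles}, choosing the auxiliary cover of $F^k$ to realize the invariant $\TC_{k,G}^{\#,*}(F)$. If $\TC_{k,G}^{\#,*}(F)=\infty$ there is nothing to prove, so I would assume $n:=\TC_{k,G}^{\#,*}(F)$ is finite. Since $\TC_{k,G}^{\#,*}(F)=\secat_{G^k}^{\#}(e_{k,F})$ by \Cref{def:seqtc-wrt-G}(2), I may pick a $G^k$-invariant open cover $\{V_1,\dots,V_n\}$ of $F^k$ (with the componentwise $G^k$-action) together with sequential motion planners $t_j\colon V_j\to F^I$. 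The covers $\{U_1,\dots,U_m\}$ of $B^k$ and $\{V_1,\dots,V_n\}$ of $F^k$, with the planners $s_i$ and $t_j$, are exactly the data required by \Cref{additive inequality for TC for fibre bundles}, and condition (i) of that theorem is precisely the present hypothesis about $\{R_1,\dots,R_m\}$, the $f_i$, and $s_i(U_i)\subseteq R_i^I$.

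It then remains to verify condition (ii) of \Cref{additive inequality for TC for fibre bundles}. First I would note that $U_i\subseteq R_i^k$: for $u\in U_i$, applying $e_{k,B}$ to $s_i(u)\in R_i^I$ gives $u=e_{k,B}(s_i(u))\in R_i^k$. Since each $R_i$ is closed, $R_i^k$ is closed in $B^k$, hence $\overline{U_i}\subseteq R_i^k$. By \Cref{remark for additive inequality of TC for fibre bundles}(i), closedness of the $R_i$ also guarantees that the induced trivializations $h_i$ extend to local trivializations over $\overline{U_i}$, namely $\left.f_i^k\right|_{(p^k)^{-1}(\overline{U_i})}$. Next, since the $f_i$ form a $G$-atlas and each $V_j$ is $G^k$-invariant, \Cref{remark for additive inequality of TC for fibre bundles}(ii) shows that the transition map $f_{i'}^k\circ (f_i^k)^{-1}$ preserves $W\times V_j$ for every subset $W$ of $R_i^k\cap R_{i'}^k$. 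Taking $W=\overline{U_i}\cap\overline{U_{i'}}\subseteq R_i^k\cap R_{i'}^k$, and noting that (with the extensions above) $h_{i'}\circ h_i^{-1}$ is the restriction of $f_{i'}^k\circ (f_i^k)^{-1}$ to $(p^k)^{-1}(\overline{U_i}\cap\overline{U_{i'}})$, I conclude that $(\overline{U_i}\cap\overline{U_{i'}})\times V_j$ is invariant under $h_{i'}\circ h_i^{-1}$ for all $i,i',j$. Thus condition (ii) holds, and \Cref{additive inequality for TC for fibre bundles} yields $\TC_k(E)\leq m+n-1=m+\TC_{k,G}^{\#,*}(F)-1$.

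For the two special cases I would simply substitute the already-established comparisons for the new invariants: \Cref{prop:ct-leq-strongtc-wrtG} gives $\TC_{k,G}^{\#,*}(F)\leq \ct_{G^k}^{\#}(F^k)$, and \Cref{prop:new-TC-properties}(2) gives $\TC_{k,G}^{\#,*}(F)\leq \TC_{k,G}^{*}(F)$; feeding these into the bound above produces $\TC_k(E)\leq m+\ct_{G^k}^{\#}(F^k)-1$ and $\TC_k(E)\leq m+\TC_{k,G}^{*}(F)-1$ respectively.

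As for the main obstacle: there is essentially no new idea here, since the real work was done in \Cref{additive inequality for TC for fibre bundles}. The only point that needs genuine care is the bookkeeping that lets the $G$-atlas hypothesis be applied to the \emph{extended} trivializations over the closures $\overline{U_i}$ — that is, checking $\overline{U_i}\subseteq R_i^k$ so that $\overline{U_i}\cap\overline{U_{i'}}$ lies in the common trivializing region $R_i^k\cap R_{i'}^k$. This is where the closedness of the $R_i$ is used twice, once to extend $h_i$ and once to control $\overline{U_i}$.
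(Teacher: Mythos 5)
Your proposal is correct and follows essentially the same route as the paper: reduce to \Cref{additive inequality for TC for fibre bundles} by choosing a $G^k$-invariant motion-planning cover of $F^k$ realizing $\TC_{k,G}^{\#,*}(F)$, verify condition (ii) via \Cref{remark for additive inequality of TC for fibre bundles} using $\overline{U_i}\subseteq R_i^k$, and obtain the two special cases from \Cref{prop:ct-leq-strongtc-wrtG} and \Cref{prop:new-TC-properties}. Your write-up is in fact somewhat more explicit than the paper's about why $\overline{U_i}\subseteq R_i^k$, but the argument is the same.
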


\begin{proof}
Suppose that $\TC_{k,G^k}^{\#,*}(F)=n$ and $\{V_j\}_{j=1}^{n}$ be a $G^k$-invariant open cover of $F^k$ with sequential motion planners. By Remark \ref{remark for additive inequality of TC for fibre bundles}, we have $h_i \circ h_{i'}^{-1}$ preserves $(\overline{U_i} \cap \overline{U_{i'}}) \times V_j$ as $\overline{U_i} \subseteq R_i^k$ and $h_i \circ h_{i'}^{-1}$ is restriction of $f^k_i \circ (f^k_{i'})^{-1}$. Thus, by \Cref{additive inequality for TC for fibre bundles}, it follows that $\TC_k(E) \leq m+n-1$.
\end{proof}

\section{LS category of fibre bundles}
\label{sec:ls-category-fibre-bundles}

Let $F \hookrightarrow E \xrightarrow{p} B$ be a fibre bundle. 
In \cite[Theorem 2.6]{Naskar}, Naskar and the third author provided an additive upper bound on the LS category of $E$. 
Due to an oversight in the Lemma preceding it, we state the modified versions of \cite[Lemma 2.5]{Naskar} and \cite[Theorem 2.6]{Naskar} in \Cref{lemma: induced trivialization for cat} and \Cref{thm: additive upper bound for ls category}, respectively.
We omit the proof of \Cref{thm: additive upper bound for ls category}, since it follows the same line of argument as \cite[Theorem 2.6]{Naskar} and \Cref{additive inequality for TC for fibre bundles}. 
Finally, we provide an additive upper bound on $\ct(E)$ in terms of $\ct_{G}^{\#}(F)$ if the fibre bundle $p$ has the structure group $G$.

\begin{lemma}
\label{lemma: induced trivialization for cat}
    Let $F \hookrightarrow E \xrightarrow{p} B$ be a fibre bundle over a paracompact space $B$. 
    If $U \subseteq R \subseteq B$ is such that $U$ is a categorical subset of $R$ and $R$ is a categorical subset of $B$, then for any trivialization $\phi \colon p^{-1}(R) \to R \times F$ of $p$ over $R$ and any categorical subset $V$ of $F$, we have the preimage $\phi^{-1}(U \times V)$ is categorical in $E$.
\end{lemma}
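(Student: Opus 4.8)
The plan is to perform the entire contraction inside $p^{-1}(R)$ and then push it into $E$ along the inclusion $p^{-1}(R)\hookrightarrow E$. Since the trivialization $\phi$ is a homeomorphism, it restricts to a homeomorphism $\phi^{-1}(U\times V)\xrightarrow{\ \cong\ }U\times V$, so it is enough to produce a null-homotopy of the inclusion $U\times V\hookrightarrow R\times F$: transporting such a null-homotopy back across $\phi$ yields a null-homotopy of $\phi^{-1}(U\times V)\hookrightarrow p^{-1}(R)$, and post-composing with $p^{-1}(R)\hookrightarrow E$ gives a null-homotopy of $\phi^{-1}(U\times V)\hookrightarrow E$, i.e.\ $\phi^{-1}(U\times V)$ is categorical in $E$.

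To obtain the null-homotopy of $U\times V\hookrightarrow R\times F$, I would use the two categorical hypotheses separately. Because $U$ is categorical in $R$, fix a homotopy $H\colon U\times I\to R$ with $H(\,\cdot\,,0)$ the inclusion $U\hookrightarrow R$ and $H(\,\cdot\,,1)\equiv r_0$ for some $r_0\in R$; because $V$ is categorical in $F$, fix $K\colon V\times I\to F$ with $K(\,\cdot\,,0)$ the inclusion $V\hookrightarrow F$ and $K(\,\cdot\,,1)\equiv f_0$ for some $f_0\in F$. Then
\[
(U\times V)\times I\longrightarrow R\times F,\qquad \bigl((u,v),t\bigr)\longmapsto \bigl(H(u,t),K(v,t)\bigr)
\]
is continuous, restricts to the inclusion $U\times V\hookrightarrow R\times F$ at $t=0$, and is the constant map at $(r_0,f_0)$ at $t=1$. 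Transporting across $\phi$ as above contracts $\phi^{-1}(U\times V)$ to the point $\phi^{-1}(r_0,f_0)$ inside $E$.

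I do not anticipate a real obstacle; the content lies entirely in the placement of the homotopies, namely that the track $H(U\times I)$ of the contraction of $U$ stays inside the trivializing set $R$ where $\phi^{-1}$ is available. This is precisely the gap in \cite[Lemma 2.5]{Naskar}, where the bundle was trivialized only over $U$ (equivalently, one wished to trivialize over the possibly ill-behaved image of a contracting homotopy). Note that for the conclusion above one uses only that $U$ is categorical in $R$, that $V$ is categorical in $F$, and that $p$ is trivial over $R$; the remaining hypothesis that $R$ be categorical in $B$ does not enter this argument, but it is what guarantees that $\phi$ serves as an ``induced trivialization'' of the kind needed when these pieces are assembled in \Cref{thm: additive upper bound for ls category}.
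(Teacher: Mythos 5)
Your proposal is correct and follows essentially the same route as the paper: contract $U\times V$ inside $R\times F$ via the product of the two given null-homotopies, transport across the homeomorphism $\phi$ to contract $\phi^{-1}(U\times V)$ inside $p^{-1}(R)$, and then include into $E$. Your closing observation is also accurate — the paper invokes the hypothesis that $R$ is categorical in $B$ (together with paracompactness) only to guarantee via \cite[Proposition 4.62]{hatcher} that a trivialization over $R$ exists, which is redundant once $\phi$ is supplied in the statement.
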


\begin{proof} 
    Since the inclusion $R \hookrightarrow B$ is null-homotopic and $B$ is paracompact, it follows that $E$ is trivial over $R$, see \cite[Proposition 4.62]{hatcher}. 
    Suppose $\phi \colon p^{-1}(R) \to R \times F$ is a trivialization of $p$ over $R$. 
    As $U \times V$ is categorical in $R \times F$, it follows that $ \phi^{-1}(U \times V)$ is categorical in $p^{-1}(R)$.
    Hence, $\phi^{-1}(U \times V)$ is categorical in $E$.
\end{proof}

\begin{remark}
\label{rem: induced trivialization over contractible subset}
    If $R$ is a contractible subset of a space $B$, then any chain of inclusions $U \subseteq R \subseteq B$ satisfies the condition that $U$ is a categorical subset of $R$ and $R$ is a categorical subset of $B$, as required in \Cref{lemma: induced trivialization for cat}.
\end{remark}

\begin{theorem}
\label{thm: additive upper bound for ls category}
    Let $F \hookrightarrow E \xrightarrow{p} B$ be a fibre bundle, where $E$ is a completely normal, path-connected space and $B$ is a paracompact space. 
    Let $\{V_1,\dots,V_n\}$ be a categorical open cover of $F$, and let $\{U_1, \dots, U_m\}$ be an open cover of $B$ such that, for each $i \in \{1,\dots,m\}$, there exists a closed subset $R_i$ of $B$ that contains $U_i$ such that $U_i$ is categorical in $R_i$ and $R_i$ is categorical in $B$. 
    If, for each $i \in \{1,\dots,m\}$, there exist local trivializations $\phi_i\colon p^{-1}(R_i) \to R_i \times F$ of $p$ over $R_i$ such that $(\overline{U_i} \cap \overline{U_{i'}}) \times V_j$ is invariant under $\phi_{i'} \circ \phi_i^{-1}$ for all $1 \leq i , i' \leq m$ and $1 \leq j \leq n$, then $\ct(E) \leq m+n-1$.
\end{theorem}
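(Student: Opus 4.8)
The plan is to run the argument of \Cref{additive inequality for TC for fibre bundles} verbatim, with the bundle $p\colon E\to B$ in place of $p^k\colon E^k\to B^k$, the categorical open cover $\{V_1,\dots,V_n\}$ of $F$ in place of the sequential motion planning cover of $F^k$, and \Cref{lemma: induced trivialization for cat} in place of the induced-trivialization construction of \Cref{def: induced trivialization for TC}. First I would set $X_0=\emptyset$, $X_i=U_1\cup\dots\cup U_i$ for $1\le i\le m$, and $X_i=B$ for $m\le i\le m+n-1$; dually $Y_0=\emptyset$, $Y_j=V_1\cup\dots\cup V_j$ for $1\le j\le n$, and $Y_j=F$ for $n\le j\le m+n-1$. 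For $1\le l\le m+n-1$ put $Z_{i,l}=(X_i\setminus X_{i-1})\times(Y_{l-i+1}\setminus Y_{l-i})$ for $1\le i\le l$ (empty whenever $i\ge m+1$, with the same conventions as in \Cref{additive inequality for TC for fibre bundles} when an index exceeds $n$). Since $X_i\setminus X_{i-1}\subseteq U_i\subseteq R_i$ and $\phi_i$ is defined on $p^{-1}(R_i)$, the set $\phi_i^{-1}(Z_{i,l})$ makes sense, and I would set $Q_l=\bigcup_{i=1}^{l}\phi_i^{-1}(Z_{i,l})$.

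The core step is the separation claim: for $i\ne i'$ the sets $\phi_i^{-1}(Z_{i,l})$ and $\phi_{i'}^{-1}(Z_{i',l})$ are separated in $E$. Here the hypothesis that each $R_i$ is closed and contains $U_i$ (hence $\overline{U_i}$) is what replaces the ``extend the trivialization over $\overline{U_i}$'' hypothesis of \Cref{additive inequality for TC for fibre bundles}: $\phi_{i'}$ is already defined over $\overline{U_{i'}}$, and $p^{-1}(R_{i'})$ is closed in $E$, so $\overline{\phi_{i'}^{-1}(Z_{i',l})}\subseteq\phi_{i'}^{-1}(\overline{Z_{i',l}})$. Chasing a putative point $z\in\phi_i^{-1}(Z_{i,l})\cap\overline{\phi_{i'}^{-1}(Z_{i',l})}$ exactly as in \Cref{additive inequality for TC for fibre bundles}, I would write $\phi_{i'}(z)=(x,y_{i'})$ with $x=p(z)$, extract $x\notin X_{i'-1}$ and $y_{i'}\notin Y_{l-i'}$, observe $x\in\overline{U_i}\cap\overline{U_{i'}}$ and $\phi_i(z)=(x,y_i)$ with $y_i\in V_{l-i+1}$, invoke invariance of $(\overline{U_i}\cap\overline{U_{i'}})\times V_{l-i+1}$ under $\phi_{i'}\circ\phi_i^{-1}$ to get $y_{i'}\in V_{l-i+1}\subseteq Y_{l-i+1}$, and then derive a contradiction from monotonicity of $(X_i)$ when $i<i'$ and of $(Y_j)$ when $i'<i$.

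With separation in hand, I would use complete normality of $E$ repeatedly, as in \cite[Theorem 1.37]{CLOT}, to enlarge the pairwise-separated sets $\phi_i^{-1}(Z_{i,l})$, $1\le i\le l$, to pairwise-disjoint open sets $W_{i,l}$, and set $C_{i,l}=W_{i,l}\cap\phi_i^{-1}(U_i\times V_{l-i+1})$ and $C_l=\coprod_{i=1}^{l}C_{i,l}$. Each $C_{i,l}$ is open in $E$ since it lies in the open set $p^{-1}(U_i)$, on which $\phi_i$ restricts to a homeomorphism onto $U_i\times F$; and $\phi_i^{-1}(U_i\times V_{l-i+1})$ is categorical in $E$ by \Cref{lemma: induced trivialization for cat} applied to $U_i\subseteq R_i\subseteq B$ and the categorical subset $V_{l-i+1}$ of $F$, so its subset $C_{i,l}$ is categorical in $E$ too. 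Since $E$ is path-connected, a disjoint union of categorical open subsets of $E$ is again categorical (connect the contraction points by paths in $E$ and splice), so each $C_l$ is a categorical open subset of $E$. Finally, $Q_l\subseteq C_l$ and the same index manipulation as in \Cref{additive inequality for TC for fibre bundles} gives $\bigcup_{l=1}^{m+n-1}Q_l=p^{-1}(B)=E$, so $\{C_l\}_{l=1}^{m+n-1}$ is a categorical open cover of $E$ and $\ct(E)\le m+n-1$. The only delicate point is the separation claim, exactly as in the $\TC_k$ case; the one genuinely new ingredient is replacing ``a section of $e_{k,E}$ over $C_l$'' by ``$C_l$ is categorical in $E$'', which is handled by \Cref{lemma: induced trivialization for cat} together with the elementary facts that a subset of a categorical set is categorical and that a disjoint union of categorical opens in a path-connected space is categorical, so beyond careful bookkeeping of the $X_i$, $Y_j$ conventions I expect no further obstacle.
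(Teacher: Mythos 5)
Your proposal is correct and is precisely the argument the paper intends: the paper omits the proof of this theorem, stating only that it follows the same line as Theorem~\ref{additive inequality for TC for fibre bundles}, and your adaptation — using the closedness of $R_i$ in place of the extension-over-$\overline{U_i}$ hypothesis for the separation claim, Lemma~\ref{lemma: induced trivialization for cat} to make $\phi_i^{-1}(U_i\times V_{l-i+1})$ categorical, and path-connectedness of $E$ to handle the disjoint unions — supplies exactly the substitutions needed. No gaps.
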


\begin{corollary}
\label{cor: additive ub on ct of fb}
Let $F \hookrightarrow E \xrightarrow{p} B$ be a fibre bundle with structure group $G$, where $E$ is a completely normal, path-connected space and $B$ is a paracompact space. 
Let $\{U_1, \dots, U_m\}$ be an open cover of $B$ such that, for each $i \in \{1,\dots,m\}$, there exists a closed subset $R_i$ of $B$ that contains $U_i$ such that $U_i$ is categorical in $R_i$ and $R_i$ is categorical in $B$. 
If there exist trivializations $\phi_i\colon p^{-1}(R_i) \to R_i \times F$ of $p$ over $R_i$ such that the restriction of the $\phi_i$'s over $\overline{U_i}$'s form a $G$-atlas of $p$, then $\ct(E) \leq m+\ct^{\#}_{G}(F)-1$.
\end{corollary}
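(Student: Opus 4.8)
The plan is to obtain this as a direct application of \Cref{thm: additive upper bound for ls category}, feeding it a $G$-invariant categorical open cover of the fibre. First I would set $n := \ct_{G}^{\#}(F)$ and, using \Cref{def:catwrtG}, fix a $G$-invariant categorical open cover $\{V_1,\dots,V_n\}$ of $F$. The open cover $\{U_1,\dots,U_m\}$ of $B$, the closed sets $R_i \supseteq U_i$ with $U_i$ categorical in $R_i$ and $R_i$ categorical in $B$, and the trivializations $\phi_i \colon p^{-1}(R_i) \to R_i \times F$ are exactly the data that \Cref{thm: additive upper bound for ls category} requires. Since each $R_i$ is closed and contains $U_i$, we have $\overline{U_i} \subseteq R_i$, so each $\phi_i$ restricts to a trivialization of $p$ over $\overline{U_i}$, hence over $\overline{U_i} \cap \overline{U_{i'}}$; this is what makes the hypothesis ``the restrictions of the $\phi_i$'s over the $\overline{U_i}$'s form a $G$-atlas of $p$'' meaningful.

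The one point that needs checking is the invariance hypothesis of \Cref{thm: additive upper bound for ls category}, namely that $(\overline{U_i} \cap \overline{U_{i'}}) \times V_j$ is invariant under $\phi_{i'} \circ \phi_i^{-1}$ for all $1 \leq i,i' \leq m$ and $1 \leq j \leq n$. For this I would invoke the $G$-atlas condition: the transition maps take the form
$$
(\phi_{i'} \circ \phi_i^{-1})(x,y) = (x,\, \tau_{i'i}(x)\,y), \qquad x \in \overline{U_i} \cap \overline{U_{i'}},\ \ y \in F,
$$
for continuous maps $\tau_{i'i} \colon \overline{U_i} \cap \overline{U_{i'}} \to G$. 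Since each $V_j$ is a $G$-invariant subset of $F$, we have $\tau_{i'i}(x) V_j = V_j$ for every $x$, and therefore $(\phi_{i'} \circ \phi_i^{-1})\bigl((\overline{U_i} \cap \overline{U_{i'}}) \times V_j\bigr) = (\overline{U_i} \cap \overline{U_{i'}}) \times V_j$, which is precisely the required invariance. Applying \Cref{thm: additive upper bound for ls category} then gives $\ct(E) \leq m + n - 1 = m + \ct_{G}^{\#}(F) - 1$.

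I do not anticipate a genuine obstacle: the argument runs parallel to that of \Cref{cor:seqtc-ub-using-gcat} and to the bookkeeping in \Cref{remark for additive inequality of TC for fibre bundles}(ii). The only mildly delicate step is to make sure the closed sets $R_i$, their trivializations, and the inclusions $\overline{U_i} \subseteq R_i$ all line up so that the $G$-atlas transition maps are defined exactly on the overlaps $\overline{U_i} \cap \overline{U_{i'}}$ appearing in \Cref{thm: additive upper bound for ls category} — which, as noted above, they do.
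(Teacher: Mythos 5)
Your proposal is correct and follows essentially the same route as the paper's own proof: fix a $G$-invariant categorical open cover of $F$ realizing $\ct_G^{\#}(F)$, use the $G$-atlas form of the transition maps $(x,y)\mapsto(x,\tau_{i'i}(x)y)$ together with $G$-invariance of the $V_j$ to verify the invariance hypothesis, and apply \Cref{thm: additive upper bound for ls category}. No gaps.
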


\begin{proof}
    Suppose that $\ct^{\#}_{G}(F)=n$ and $\{V_j\}_{j=1}^{n}$ be a $G$-invariant categorical open cover of $F$. 
    Note that the transition maps $\phi_{i'} \circ \phi_i^{-1}\colon (\overline{U_i} \cap \overline{U_{i'}}) \times F \to (\overline{U_i} \cap \overline{U_{i'}}) \times F$ are given by
    $$
    \phi_{i'} \circ \phi_i^{-1}(x,y) = (x,\tau_{i'i}(x)y)
    $$
    for some continuous maps $\tau_{ii'}\colon  \overline{U_i} \cap \overline{U_{i'}} \to G$ for all $1 \leq i,i' \leq m$. Hence, $(\overline{U_i} \cap \overline{U_{i'}}) \times V_j$ is invariant under $\phi_{i'} \circ \phi_i^{-1}$ for all $1 \leq i , i' \leq m$ and $1 \leq j \leq n$, since $V_j$'s are $G$-invariant. Thus, by \Cref{thm: additive upper bound for ls category}, it follows $\ct(E) \leq m+n-1$.
\end{proof}

\section{Application to generalized projective product spaces}\label{sec:applica-gpps}

Let $M$ and $N$ be topological spaces with involutions $\tau \colon M \to M$ and $\sigma \colon N \to N$ such that $\sigma$ is fixed-point free. Sarkar and Zvengrwoski in \cite{sarkargpps} introduced  the following identification spaces
\begin{equation}\label{eq: gpps}
X(M, N) := \frac{M \times N}{(x,y)\sim (\tau(x), \sigma(y))}.
\end{equation}
These identification spaces are called \emph{generalized projective product spaces}. Note that this class of spaces contains projective product spaces, introduced by Davis in \cite{Davis} and Dold manifolds by Dold in \cite{Dold}.

Let $\pi_2 \colon M \times N \to N$ be the projection map onto $N$, and $q_2 \colon N \to N/\left<\sigma\right>$ and ${q \colon M \times N \to X(M,N)}$ be the natural quotient maps. Then there exists a continuous map $\mathfrak{p} \colon X(M,N) \to N/\left<\sigma \right>$, given by $\mathfrak{p}\left(\left[x,y\right]\right) = q_2(y)$, such that the diagram
\[
    \begin{tikzcd}
        M\times N \arrow[r, "\pi_2"] \arrow[d, "q"] & N \arrow[d, "q_2"]       \\
        {X(M,N)} \arrow[r, "\mathfrak{p}", dotted]  & N/\langle \sigma \rangle
    \end{tikzcd}
\]
commutes, since $ q_2(\pi_2(\tau(x),\sigma(y))) = q_2(\sigma(y)) = q_2(y) = q_2(\pi_2(x,y))$.

\begin{proposition}
\label{tau-atlas for projective product spaces}
If $N$ is Hausdorff, then $q_2$ is a covering map. Moreover, 
\begin{equation}\label{eq: gpps fb}
    M \xhookrightarrow{} X(M, N) \stackrel{\mathfrak{p}}\longrightarrow N/\left<\sigma \right>,
\end{equation}
is a fibre bundle with structure group $\left< \tau \right>$, and a $\left< \tau \right>$-atlas for $\mathfrak{p}$ is given over all evenly covered open subsets of $N/\left< \sigma \right>$ with respect to $q_2$.
\end{proposition}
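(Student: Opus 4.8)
The plan is to deduce everything from the elementary fact that $\langle\sigma\rangle$ acts freely (as a two‑element group) on the Hausdorff space $N$. First I would show $q_2$ is a covering map. Given $y\in N$, since $\sigma$ is fixed‑point free we have $y\neq\sigma(y)$, so Hausdorffness provides disjoint open sets $y\in U_0$ and $\sigma(y)\in U_1$; setting $U:=U_0\cap\sigma(U_1)$ gives $y\in U$ and $U\cap\sigma(U)\subseteq U_0\cap U_1=\emptyset$. Since $q_2$ is an open map (being an orbit map) and $q_2^{-1}(q_2(U))=U\sqcup\sigma(U)$, with $q_2$ restricting to a continuous open bijection on each summand, $q_2(U)$ is an evenly covered neighbourhood of $q_2(y)$; hence $q_2$ is a two‑fold covering. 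I would also record here that over any evenly covered open $W$ the relation $q_2\circ\sigma=q_2$ forces $\sigma$ to permute the sheets of $q_2^{-1}(W)$ without fixing any (a $\sigma$‑fixed sheet $V$ would make $\sigma|_V$ both fixed‑point free and, by injectivity of $q_2|_V$, the identity), so the sheets split into $\sigma$‑pairs; choosing one sheet from each pair yields a ``half'' $W_0\subseteq q_2^{-1}(W)$ with $q_2|_{W_0}\colon W_0\to W$ a homeomorphism and $\sigma(W_0)=q_2^{-1}(W)\setminus W_0=:W_1$.

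Next I would build the trivialization of $\mathfrak p$ over such a $W$. The key point is that $\mathfrak p^{-1}(W)=\{[x,y]:q_2(y)\in W\}$ and that every class here has a unique representative whose second coordinate lies in $W_0$ (the two representatives of $[x,y]$ are $(x,y)$ and $(\tau(x),\sigma(y))$, and exactly one of $y,\sigma(y)$ is in $W_0$). Writing $r_0\colon W\to W_0$ for the inverse of $q_2|_{W_0}$, I define $\Phi_W\colon\mathfrak p^{-1}(W)\to W\times M$ by $\Phi_W([x,y])=(q_2(y),x)$ if $y\in W_0$ and $\Phi_W([x,y])=(q_2(y),\tau(x))$ if $y\in W_1$. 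One checks this is independent of the chosen representative, and continuous: its composite with the quotient map $q\colon M\times q_2^{-1}(W)\to\mathfrak p^{-1}(W)$ — which is again a quotient map, being the restriction of $q$ to the saturated open set $M\times q_2^{-1}(W)$ — is given by the continuous formulas $(x,y)\mapsto(q_2(y),x)$ on $M\times W_0$ and $(x,y)\mapsto(q_2(y),\tau(x))$ on $M\times W_1$. The inverse $(w,x)\mapsto[x,r_0(w)]$ is visibly continuous, and $\mathrm{pr}_W\circ\Phi_W=\mathfrak p|_{\mathfrak p^{-1}(W)}$; restricting to a single fibre identifies it with $M$.

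Finally I would compute the transition maps. For evenly covered opens $W,W'$ with chosen halves $W_0,W_0'$, the composite $\Phi_{W'}\circ\Phi_W^{-1}$ sends $(w,x)\in(W\cap W')\times M$ to $[x,r_0^W(w)]$ and then to $(w,x)$ if $r_0^W(w)\in W_0'$ and to $(w,\tau(x))$ if $r_0^W(w)\in W_1'$. Since $r_0^W$ is continuous and $W_0',W_1'$ are disjoint open sets covering $q_2^{-1}(W')$, the two subsets of $W\cap W'$ on which these alternatives occur are open and partition $W\cap W'$; hence $w\mapsto\mathrm{id}$ or $\tau$ is locally constant, i.e.\ a continuous map $W\cap W'\to\langle\tau\rangle$, and the transition map acts on the fibre $M$ through the tautological $\langle\tau\rangle$‑action. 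Therefore $\{(W,\Phi_W)\}$, as $W$ ranges over the evenly covered opens of $N/\langle\sigma\rangle$, is a $\langle\tau\rangle$‑atlas and $\mathfrak p$ is a fibre bundle with fibre $M$ and structure group $\langle\tau\rangle$.

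I expect the main obstacle to be the second step: pinning down the chart $\Phi_W$ and verifying that it is a well‑defined homeomorphism through the quotient map $q$, which requires being careful about the (non‑canonical) sheet choice $W_0$. Everything else is then essentially forced — the covering‑map statement is standard point‑set topology, and the transition‑function computation merely records the ambiguity $W_0\leftrightarrow W_1=\sigma(W_0)$, which is precisely what produces the structure group $\langle\tau\rangle$ (acting trivially exactly when $\tau=\mathrm{id}$, in which case $\mathfrak p$ is a trivial bundle).
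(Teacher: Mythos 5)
Your proposal is correct and follows essentially the same route as the paper: exhibit $q_2$ as a two-fold covering using freeness of $\sigma$ and Hausdorffness of $N$, build a chart over each evenly covered open set by choosing one sheet of its preimage, and observe that the ambiguity in that choice makes the transition functions take values in $\langle\tau\rangle$. The only (harmless) difference is that you prove the covering-map claim directly rather than citing it, and you correctly record that the transition function is merely \emph{locally} constant with values in $\langle\tau\rangle$, whereas the paper's proof splits $q_2^{-1}(U\cap U')$ into one of two global cases, which is the same computation under the implicit assumption that the overlap is connected.
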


\begin{proof}
Suppose that $z_0 \in N/\left<\sigma \right>$. Since the action of $\left<\sigma \right>$ on $N$ is free and $N$ is Hausdorff, by \cite[Exercise 23, Page 81 and Proposition 1.40]{hatcher}, $q_2$ is a $2$-sheeted covering map. Hence, there exists an open set $U$ of $N/\left< \sigma \right>$ such that $q_2^{-1}(U) = V \amalg \sigma(V)$ and $\left.q_2\right|_{V} \colon V \to U$ is a homemorphism, where $V$ is an open subset of $N$. As 
$$
    q^{-1}(\mathfrak{p}^{-1}(U)) 
        = \pi_2^{-1}(q_2^{-1}(U))
        = \pi_2^{-1}(V \amalg \sigma(V)) 
        = \left( M \times V \right) \amalg \left( M \times \sigma(V) \right)
$$
and $q$ is surjective, it follows 
$$
    \mathfrak{p}^{-1}(U)
        = q(q^{-1}(p^{-1}(U)))
        = \frac{\left( M \times V \right) \amalg \left( M \times \sigma(V) \right)}{(x,y)\sim (\tau(x), \sigma(y))}
$$
Define $\phi \colon \left( M \times V \right) \amalg \left( M \times \sigma(V) \right) \to M \times U$ by 
$$
\phi(x,y) = 
    \begin{cases}
        (x,q_2(y)) & \text{ if }(x,y) \in M\times V,\\
        (\tau(x),q_2(y)) & \text{ if }(x,y) \in M\times \sigma(V).
    \end{cases}
$$
The map $\phi$ is well-defined since $V \cap \sigma(V) = \emptyset$. Moreover, $\phi$ satisfies $\phi(x,y)=\phi(\tau(x),\sigma(y))$, and hence induces 
$$
\Phi_U \colon \frac{\left( M \times V \right) \amalg \left( M \times \sigma(V) \right)}{(x,y)\sim (\tau(x), \sigma(y))} \to M \times U.
$$
Define 
$$
    \Psi_U \colon M \times U \to \frac{\left( M \times V \right) \amalg \left( M \times \sigma(V) \right)}{(x,y)\sim (\tau(x), \sigma(y))}
$$
as the composition of maps
\[
\begin{tikzcd}
    M \times U \arrow[r,"\text{id}_M \times (\left.q_2\right|_{V})^{-1}"]  &[3em] M \times V \arrow[r, hook] &  \left( M \times V \right) \amalg \left( M \times \sigma(V) \right) \arrow[r,"\mathfrak{p}"] & \frac{\left( M \times V \right) \amalg \left( M \times \sigma(V) \right)}{(x,y)\sim (\tau(x), \sigma(y))}.
\end{tikzcd} 
\] 
Then $\Phi_U$ and $\Psi_U$ are inverses of each other and define a trivialization of $\mathfrak{p}$. 

Suppose that $U'$ is another evenly covered neighborhood with $q_2^{-1}(U')=V' \amalg \sigma(V')$ such that $U \cap U' \neq \emptyset$. If $q_2^{-1}(U \cap U') = (V \cap V') \amalg (\sigma(V) \cap \sigma(V'))$, then the transition function
$
    \Phi_{U} \circ \Psi_{U'} \colon M \times (U \cap U') \to M \times (U \cap U')
$
is given by
$$
    \Phi_{U} \left(\Psi_{U'}(x,z)\right)
        = \Phi_{U} \left(\left[x,(\left.q_2\right|_{V'})^{-1}(z)\right] \right)
        = \left(x,q_2((\left.q_2\right|_{V'})^{-1}(z))\right) 
        = (x,z).
$$
If $q_2^{-1}(U \cap U') = (V \cap \sigma(V')) \amalg (\sigma(V) \cap V')$, then
${\Phi_{U} \circ \Psi_{U'} \colon M \times (U \cap U') \to M \times (U \cap U')}$
is given by
$$
    \Phi_{U} \left(\Psi_{U'}(x,z)\right)
        = \Phi_{U} \left(\left[\tau(x),(\left.q_2\right|_{V'})^{-1}(z)\right] \right)
        = \left(\tau(x),q_2((\left.q_2\right|_{V'})^{-1}(z))\right) 
        = (\tau(x),z).
$$
Thus, the family $\{\Phi_U\}$ defines a $\left< \tau \right>$-atlas for $\mathfrak{p}$.
\end{proof}

\begin{remark}
    Any set (need not be open) which is evenly covered by the covering map $q_2$ can also be taken as a part of $\left<\tau\right>$-atlas for the fibre bundle $\mathfrak{p}$.
\end{remark}

\begin{proposition}
\label{prop: cat of gpps with base S^n}
    Let $M$ be a path-connected topological space.
    Suppose that $\tau \colon M \to M$ is an involution on $M$ and $\sigma \colon S^n \to S^n$ is the antipodal involution on $S^n$ given by $\sigma(z)=-z$.
    If $X(M,S^n)$ is completely normal, then
    $$
        \ct(X(M,S^n)) \leq \ct_{\left< \tau\right>}^{\#}(M)+n.
    $$
\end{proposition}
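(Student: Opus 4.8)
The plan is to present $X(M,S^n)$ as the total space of the fibre bundle
$M \hookrightarrow X(M,S^n) \xrightarrow{\mathfrak{p}} S^n/\langle\sigma\rangle$ with structure group $\langle\tau\rangle$ supplied by \Cref{tau-atlas for projective product spaces} (here $S^n/\langle\sigma\rangle = \mathbb{R}P^n$ since $\sigma$ is the antipodal involution), and then to invoke \Cref{cor: additive ub on ct of fb} with $F=M$, $G=\langle\tau\rangle$, $B=\mathbb{R}P^n$ and $m=n+1$. The ambient hypotheses are immediate: $X(M,S^n)$ is completely normal by assumption and path-connected because $M\times S^n$ is, while $\mathbb{R}P^n$ is compact metrizable, hence paracompact. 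So the task reduces to building an open cover $\{U_0,\dots,U_n\}$ of $\mathbb{R}P^n$ together with closed sets $R_i\supseteq U_i$ for which $U_i$ is categorical in $R_i$, $R_i$ is categorical in $\mathbb{R}P^n$, and $\mathfrak{p}$ admits trivializations over the $R_i$ whose restrictions over the $\overline{U_i}$ form a $\langle\tau\rangle$-atlas. Granting such a cover, \Cref{cor: additive ub on ct of fb} gives $\ct(X(M,S^n))\le (n+1)+\ct^{\#}_{\langle\tau\rangle}(M)-1=\ct^{\#}_{\langle\tau\rangle}(M)+n$.

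For the cover I would use spherical caps adapted to the double cover $q_2\colon S^n\to\mathbb{R}P^n$. Fix a constant $0<c<\tfrac1{n+1}$ and, for $i=0,\dots,n$, set $\widetilde U_i=\{x\in S^n: x_i^2>c\}$ and $\widetilde R_i=\{x\in S^n: x_i^2\ge c\}$. These are $\sigma$-invariant, so they descend to an open set $U_i:=q_2(\widetilde U_i)$ and a closed set $R_i:=q_2(\widetilde R_i)$ in $\mathbb{R}P^n$ (the latter closed because $q_2$, a finite covering of compact Hausdorff spaces, is a closed map), with $U_i\subseteq R_i$ and in fact $\overline{U_i}\subseteq R_i$. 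Since $\sum_j x_j^2=1$ on $S^n$, every point has a coordinate with $x_i^2\ge\tfrac1{n+1}>c$, so $\{U_0,\dots,U_n\}$ covers $\mathbb{R}P^n$. Each of $\widetilde U_i$, $\widetilde R_i$ is a disjoint union of two antipodal caps that $\sigma$ interchanges without fixed points; hence $\widetilde R_i$ is evenly covered by $q_2$ and $q_2$ restricts to a homeomorphism of either cap onto $R_i$. As an open (resp.\ closed) spherical cap is homeomorphic to an open (resp.\ closed) $n$-ball, both $U_i$ and $R_i$ are contractible, so $R_i$ is categorical in $\mathbb{R}P^n$ and $U_i$ is categorical in $R_i$, while $R_i\supseteq\overline{U_i}$ is evenly covered by $q_2$.

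To finish, recall from \Cref{tau-atlas for projective product spaces} and the remark following it that over every evenly covered subset of $\mathbb{R}P^n$ — in particular over $R_i$ and over $\overline{U_i}$ — the bundle $\mathfrak{p}$ carries a canonical trivialization $\Phi$ built from a local section of $q_2$, and that the transition map between any two such trivializations is fibrewise multiplication by an element of $\langle\tau\rangle$. Taking $\phi_i:=\Phi_{R_i}$, its restriction over $\overline{U_i}$ coincides with $\Phi_{\overline{U_i}}$, so $\{\phi_i|_{\mathfrak{p}^{-1}(\overline{U_i})}\}$ is a $\langle\tau\rangle$-atlas, its domains covering $X(M,S^n)$ because $\{\overline{U_i}\}$ covers $\mathbb{R}P^n$. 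All hypotheses of \Cref{cor: additive ub on ct of fb} now hold, and the inequality follows. The one point that needs genuine care is the cap construction: the naive affine-chart cover of $\mathbb{R}P^n$ will not do, since the closure of each chart is all of $\mathbb{R}P^n$ and hence not categorical, so one must arrange the $R_i$ to be simultaneously closed, categorical in $\mathbb{R}P^n$, and evenly covered by $q_2$ in order to use the available $\langle\tau\rangle$-atlas.
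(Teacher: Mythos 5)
Your proposal is correct and follows essentially the same route as the paper: realize $X(M,S^n)$ as the bundle $M \hookrightarrow X(M,S^n) \to \mathbb{R}P^n$ with structure group $\langle\tau\rangle$ from \Cref{tau-atlas for projective product spaces}, cover $\mathbb{R}P^n$ by $n+1$ contractible, evenly covered closed sets whose interiors still cover, and apply \Cref{cor: additive ub on ct of fb}. The only difference is cosmetic — you parametrize the caps by $x_i^2 \ge c$ with $c < \tfrac{1}{n+1}$, while the paper uses closed spherical caps of polar angle close to $90^{\circ}$; your choice of constant is, if anything, slightly more careful about its dependence on $n$.
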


\begin{proof}
We will use the same notation as \Cref{tau-atlas for projective product spaces}. 
Note that $q_2 \colon S^n \to S^n/\left<\sigma\right> = \mathbb{R}P^n$ is the natural quotient map. 
Let $U_r$ be the open hemisphere with center (or top) at the point $e_r=(0,\dots,0,1,0,\dots,0) \in S^n$, where the $1$ is in the $r^{\text{th}}$ position. 
Let $C_r$ be a closed spherical cap contained inside $U_r$ that approximates $U_r$, i.e., the height of the spherical cap is almost the radius of the sphere (for example, we can take the polar angle to be $89^{\mathrm{o}}$). 
Then $\{q_2(C_r)\}_{r=1}^{n+1}$ form a closed cover of $\R P^n$ and $\{q_2(C_r^{\mathrm{o}})\}_{r=1}^{n+1}$ form an open cover of $\R P^n$, where $C_r^{\mathrm{o}}$ is the interior of $C_r$.
As $q_2(C_r)$'s are evenly covered by the covering map $q_2$, by \Cref{tau-atlas for projective product spaces}, it follows that $\Phi_{q_2(C_r)} \colon \mathfrak{p}^{-1}(q_2(C_r)) \to M \times q_2(C_r)$, for $r=1,\dots,n+1$, form a $\left<\tau \right>$-atlas for $\mathfrak{p}$. 
Since $q_2(C_r)$ is contractible, the chain of inclusions $q_2(C_r^{\mathfrak{o}}) \subseteq q_2(C_r) \subseteq \R P^n$ satisfies the condition that $q_2(C_r^{\mathfrak{o}})$ is categorical in $q_2(C_r)$ and $q_2(C_r)$ is categorical in $\R P^n$, see \Cref{rem: induced trivialization over contractible subset}.
Thus, the hypotheses of \Cref{cor: additive ub on ct of fb} are satisfied since $\overline{C_r^{\mathrm{o}}} = C_r$, and we obtain
$$
    \ct(X(M,S^n)) \leq (n+1)+\ct_{\left<\tau\right>}^{\#}(M) - 1 = \ct_{\left<\tau\right>}^{\#}(M)+n,
$$
which gives the desired upper bound.
\end{proof}

\begin{proposition}
\label{prop: cat and tck of gpps with base S1}
    Let $M$ be a path-connected topological space.
    Suppose that $\tau \colon M \to M$ is an involution on $M$ and $\sigma \colon S^1 \to S^1$ is the antipodal involution on $S^1$ given by $\sigma(z)=-z$.
    If $X(M,S^1)$ is Hausdorff 
    and $X(M,S^1)^k$ is completely normal, then
    $$
        \TC_k(X(M,S^1)) \leq k + \TC_{k,\left< \tau\right>}^{\#,*}(M) \leq k\,\ct_{\left< \tau\right>}^{\#}(M)+1.
    $$
\end{proposition}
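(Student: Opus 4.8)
The plan is to invoke \Cref{cor:seqtc-ub-using-gcat} for the fibre bundle $M \hookrightarrow X(M,S^1) \xrightarrow{\mathfrak{p}} S^1/\left<\sigma\right>$ with structure group $\left<\tau\right>$ supplied by \Cref{tau-atlas for projective product spaces}, taking $m=k+1$; the second inequality will then follow formally from \Cref{prop:ct-leq-strongtc-wrtG} and \Cref{cor:new-Gcat-prod-ineq-for-X^k}. Write $B := S^1/\left<\sigma\right> = \R P^1$, which is homeomorphic to a circle, and let $q_2 \colon S^1 \to B$ be the associated double cover.

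For the first inequality I would fix $k+1$ pairwise disjoint closed arcs $A_1,\dots,A_{k+1}$ in $B$, each with nonempty interior, and set $R_i := B\setminus A_i^{\circ}$ and $U_i := (B\setminus A_i)^k \subseteq B^k$ for $1\le i\le k+1$. Each $R_i$ is a closed proper subset of $B$, hence contained in $B\setminus\{\mathrm{pt}\}$, which is an open arc and therefore evenly covered by $q_2$; since subsets of evenly covered sets are evenly covered, each $R_i$ is evenly covered, so by \Cref{tau-atlas for projective product spaces} together with the remark following it (which permits non-open members of a $\left<\tau\right>$-atlas) the trivializations of $\mathfrak{p}$ over $R_1,\dots,R_{k+1}$ form a $\left<\tau\right>$-atlas of $\mathfrak{p}$; moreover $\{R_i\}_{i=1}^{k+1}$ is a closed cover of $B$ because $\bigcap_i A_i=\emptyset$. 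The combinatorial heart of the argument is that $\{U_i\}_{i=1}^{k+1}$ covers $B^k$: given $(x_1,\dots,x_k)\in B^k$, each $x_j$ lies in at most one of the pairwise disjoint arcs $A_i$, so at most $k$ of the $k+1$ indices $i$ satisfy $A_i\cap\{x_1,\dots,x_k\}\neq\emptyset$, whence some $A_{i_0}$ is disjoint from $\{x_1,\dots,x_k\}$ and $(x_1,\dots,x_k)\in (B\setminus A_{i_0})^k = U_{i_0}$. Finally $U_i\subseteq R_i^k$ since $B\setminus A_i\subseteq B\setminus A_i^{\circ}=R_i$, and $R_i$, being a closed arc, is contractible, so $e_{k,R_i}\colon R_i^I\to R_i^k$ admits a global section $\sigma_i$; composing $U_i\hookrightarrow R_i^k\xrightarrow{\sigma_i} R_i^I\hookrightarrow B^I$ produces a sequential motion planner $s_i\colon U_i\to B^I$ with $s_i(U_i)\subseteq R_i^I$. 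As $E^k = X(M,S^1)^k$ is completely normal by hypothesis, \Cref{cor:seqtc-ub-using-gcat} then gives $\TC_k(X(M,S^1)) \le (k+1)+\TC_{k,\left<\tau\right>}^{\#,*}(M)-1 = k+\TC_{k,\left<\tau\right>}^{\#,*}(M)$.

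For the second inequality, $M$ embeds as a fibre of $\mathfrak{p}$, so it is Hausdorff (as $X(M,S^1)$ is) and $M^k$ is a subspace of the completely normal space $X(M,S^1)^k$, hence completely normal. Since $\left<\tau\right>$ is finite (a compact Hausdorff group) and $M$ is a path-connected Hausdorff space with $M^k$ completely normal, \Cref{prop:ct-leq-strongtc-wrtG} and \Cref{cor:new-Gcat-prod-ineq-for-X^k} yield $\TC_{k,\left<\tau\right>}^{\#,*}(M)\le \ct_{\left<\tau\right>^k}^{\#}(M^k)\le k(\ct_{\left<\tau\right>}^{\#}(M)-1)+1$, and adding $k$ gives $k+\TC_{k,\left<\tau\right>}^{\#,*}(M)\le k\,\ct_{\left<\tau\right>}^{\#}(M)+1$.

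The main obstacle is the first inequality, and within it the observation that $B=\R P^1$ is ``half'' of $S^1$, so that any $k$-tuple of points of $B$ lies in some common proper arc; this is exactly what makes the pigeonhole argument succeed with precisely $k+1$ diagonal boxes $R_i^k$, producing $m=k+1$. The remaining points (that proper arcs of $\R P^1$ are evenly covered by $q_2$, that contractible arcs carry global sequential motion planners, and the open/closed bookkeeping among $A_i$, $A_i^{\circ}$, $R_i$ and $U_i$) are routine.
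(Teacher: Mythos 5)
Your proposal is correct and follows essentially the same route as the paper's proof: a closed cover of the circle by $k+1$ arcs whose interiors yield, via the pigeonhole argument, an open cover of the $k$-fold product by $k+1$ diagonal boxes admitting motion planners into the corresponding $R_i^I$, followed by an application of \Cref{cor:seqtc-ub-using-gcat} and then \Cref{prop:ct-leq-strongtc-wrtG} together with \Cref{cor:new-Gcat-prod-ineq-for-X^k}. The only (immaterial) difference is that you work directly in $\R P^1$ with abstract pairwise disjoint closed arcs $A_i$, whereas the paper identifies $\R P^1$ with $S^1$ via $\zeta(q_2(z))=z^2$ and writes down explicit arcs.
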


\begin{proof}
We will use the same notation as \Cref{tau-atlas for projective product spaces}. Note that $q_2 \colon S^1 \to S^1/\left<\sigma\right> = \mathbb{R}P^1$ is the natural quotient map. If $\zeta \colon \mathbb{R}P^1 \to S^1$ is the homeomorphism given by $\zeta(q_2(z))=z^2$, then we can replace $\mathfrak{p} \colon X(M,S^1) \to \mathbb{R}P^1$ with $\zeta \circ \mathfrak{p} \colon X(M,S^1) \to S^1$ to get a fibre bundle
\[
\begin{tikzcd}[column sep = 2.5 em]
    M  \arrow[hook, r] & X(M,S^1) \arrow[r, "\zeta \circ \mathfrak{p}"] & S^1.
\end{tikzcd}
\]
For $i=1,\dots,k+1$, let $R_i$ be closed subsets of $S^1$ whose complement is given by 
$$
R_i^c:=\left\{e^{\sqrt{-1}\,\theta} \in S^1 \;\Big|\; \frac{2(i-1)\pi}{k+1} < \theta < \frac{(2i-1)\pi}{k+1}  \right\}.
$$
Observe that $\{R_1, \dots,R_{k+1}\}$ form a closed cover of $S^1$. 
If $U_i$ is the interior of $R_i$, then each point $x \in S^1$ lies in at least $k$ elements from the open cover $\{U_1, \dots, U_{k+1}\}$.
Hence, $\{U_1^k, \dots, U_{k+1}^k\}$ forms an open cover of $(S^1)^k$.
As $U_i$ is contractible, there exists a global section $s_i \colon U_i^k \to U_i^I$ of $e_{k,U_i} \colon U_i^I \to U_i^k$. 
In particular, $s_i$ is a section of $e_{k,S^1}$ over $U_i^k$, via the inclusion $U_i^I \hookrightarrow (S^1)^I$. Thus, $\{U_1^k, \dots,U_{k+1}^k\}$ form a sequential motion planning open cover of $(S^1)^k$. 
Moreover, the section $s_i$ satisfies $s_i(U_i) \subseteq R_i^I$, via the inclusion $U_i^I \hookrightarrow R_i^I$. 
As $R_i$'s are evenly covered under the covering map $\zeta \circ q_2 \colon  S^1 \to S^1$, by \Cref{tau-atlas for projective product spaces}, it follows that $\Phi_{R_i} \colon (\zeta \circ \mathfrak{p})^{-1}(R_i) \to M \times R_i$, for $i=1,\dots, k$, form a $\left<\tau \right>$-atlas for $\zeta \circ \mathfrak{p}$. 
Thus, the conditions of \Cref{cor:seqtc-ub-using-gcat} are satisfied and we have
$$  \TC_k(X(M,S^1))
        \leq (k+1) + \TC_{k,\left< \tau\right>}^{\#,*}(M) - 1
        = k + \TC_{k,\left< \tau\right>}^{\#,*}(M).
$$    
Thus, it follows that
$$
    \TC_k(X(M,S^1)) 
    \leq k + \ct_{\left< \tau\right>^k}^{\#}(M^k)
    \leq k + k(\ct_{\left< \tau\right>}^{\#}(M)-1)+1 
    = k\,\ct_{\left< \tau\right>}^{\#}(M)+1
$$
by \Cref{prop:ct-leq-strongtc-wrtG} and \Cref{cor:new-Gcat-prod-ineq-for-X^k}.
\end{proof}

\begin{remark}
    We can generalize the previous proposition to $\TC_{k}(X(M,S^n))$, if we can find a closed cover $\{R_1,\dots,R_{s+1}\}$, where $s \geq k$, of $\R P^n$ such that 
\begin{enumerate}
    \item each $R_i$ is evenly covered under the quotient map $S^n \to \R P^n$,
    \item $\mathcal{U}=\{U_1, \dots, U_{s+1}\}$ forms an open cover of $\R P^n$, where $U_i$ is the interior of $R_i$,
    \item each $U_i$ is contractible, and
    \item each point in $\R P^n$ lies in at least $s$ elements from $\mathcal{U}$. 
\end{enumerate}
Then $\{U_1^k,\dots,U_{s+1}^k\}$ forms an open cover of $(\R P^n)^k$ satisfying the hypotheses of \Cref{cor:seqtc-ub-using-gcat}. 
Hence, we obtain $\TC_{k}(X(M,S^n)) \leq s + \TC^{\#,*}_{k,\langle \tau\rangle}(M)$.
\end{remark}

For each $r$-tuple $\overline{n} = (n_1,\dots,n_r)$ of positive integers, with $n_1 \geq n_2 \geq \dots \geq n_r$, the projective product space $P_{\overline{n}}$ is defined as
$$
    P_{\overline{n}} 
        := \left( S^{n_1} \times \dots \times S^{n_r}\right)/(x_1,\dots,x_r) \sim (-x_1,\dots,-x_r).
$$
These spaces were introduced and studied by Davis in \cite{Davis}.

\begin{lemma}
\label{lemma: cat-wrt-G of product of spheres}
Let $G_j$ be a finite group acting on $S^{n_j}$, where $n_j \geq 1$ for $j \in \{1,\dots,r\}$. If $G := G_1 \times \dots \times G_r$ is the product group acting on $S^{n_1}\times \dots \times S^{n_r}$ componentwise, then
$$
    \ct_{G}^{\#}\left(\prod_{j=1}^r S^{n_j}\right) = r+1.
$$
\end{lemma}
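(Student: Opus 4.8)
The plan is to establish the two inequalities $\ct_G^{\#}\big(\prod_{j=1}^r S^{n_j}\big) \le r+1$ and $\ct_G^{\#}\big(\prod_{j=1}^r S^{n_j}\big) \ge r+1$ separately.

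\textbf{Upper bound.} Each $S^{n_j}$ is a path-connected $G_j$-space (since $n_j \ge 1$), and \Cref{example:new-Gcat(S^n)} gives $\ct_{G_j}^{\#}(S^{n_j}) = 2$. Since each sphere is compact metrizable, every finite product $\prod_{j=1}^{i} S^{n_j}$ is metrizable, hence completely normal; as each $G_j$ is finite (so a compact Hausdorff group), \cite[Lemma 3.12]{colmangranteqtc} shows $\prod_{j=1}^{i} S^{n_j}$ is $(G_1\times\cdots\times G_i)$-completely normal for every $2 \le i \le r$. Feeding this into a repeated application of \Cref{prop:new-Gcat-prod-ineq} (at the $i$-th step with $X = \prod_{j=1}^{i-1} S^{n_j}$, which is path-connected, and $Y = S^{n_i}$) yields
$$
\ct_G^{\#}\Big(\prod_{j=1}^r S^{n_j}\Big) \le \sum_{j=1}^r \ct_{G_j}^{\#}(S^{n_j}) - (r-1) = 2r - (r-1) = r+1.
$$
When the $n_j$ and $G_j$ all agree this is just \Cref{cor:new-Gcat-prod-ineq-for-X^k} applied to $S^n$; the mixed case needs the iterated form.

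\textbf{Lower bound.} By \Cref{prop:cat-wrtG-properties}, $\ct(X) \le \ct_G^{\#}(X)$ for any $G$-space $X$, so it is enough to show $\ct\big(\prod_{j=1}^r S^{n_j}\big) \ge r+1$. This follows from the cohomological bound \Cref{prop: cup leq cat} taken over $\mathbb{K} = \mathbb{Z}/2$: by the Künneth theorem, $H^*\big(\prod_{j=1}^r S^{n_j}; \mathbb{Z}/2\big) \cong \bigotimes_{j=1}^r \mathbb{Z}/2[x_j]/(x_j^2)$ with $|x_j| = n_j$, so the product $x_1 x_2 \cdots x_r$ is a nonzero class while every monomial of length $\ge r+1$ must repeat some $x_j$ and hence vanishes; therefore $\mathrm{cup}_{\mathbb{Z}/2}\big(\prod_j S^{n_j}\big) = r$, which gives $\ct\big(\prod_j S^{n_j}\big) \ge r+1$.

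Combining the two bounds gives the asserted equality $\ct_G^{\#}\big(\prod_{j=1}^r S^{n_j}\big) = r+1$. The argument is essentially routine; the only points requiring attention are verifying the complete-normality hypothesis of \Cref{prop:new-Gcat-prod-ineq} at each stage of the induction (handled by metrizability of finite products of spheres together with \cite[Lemma 3.12]{colmangranteqtc}) and noting that the intermediate factors $\prod_{j=1}^{i-1} S^{n_j}$ are path-connected, as needed to invoke that proposition. There is no substantial obstacle beyond bookkeeping.
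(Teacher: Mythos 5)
Your proof is correct and follows essentially the same route as the paper: the upper bound via $\ct_{G_j}^{\#}(S^{n_j})=2$ and the product inequality, and the lower bound via $\ct(X)\le\ct_G^{\#}(X)$ together with $\ct\bigl(\prod_j S^{n_j}\bigr)=r+1$. You are in fact slightly more careful than the paper, which cites \Cref{cor:new-Gcat-prod-ineq-for-X^k} (stated only for $X^k$) where your iterated application of \Cref{prop:new-Gcat-prod-ineq} with the complete-normality check is what is really needed in the mixed case, and you supply the cup-length computation that the paper leaves implicit.
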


\begin{proof}
By \Cref{example:new-Gcat(S^n)}, we have $\ct_{G_j}^{\#} \left(S^{n_j}\right) = 2$. Therefore, from \Cref{cor:new-Gcat-prod-ineq-for-X^k}, we get the following inequality 
\[
    \ct_{G}^{\#}\left(\prod_{j=1}^rS^{n_j}\right)
        \leq \left(\sum_{j=1}^r\ct_{G_j}^{\#}(S^{n_j})\right)-(r-1)
        = r+1.
\]
As $\ct \left(\prod_{j=1}^rS^{n_j}\right) = r+1$, it follows $\ct_{G}^{\#} \left(\prod_{j=1}^rS^{n_j}\right) = r+1$.
\end{proof}

The following corollary recovers \cite[Theorem 1.2]{Vandembroucq}, as the lower bound on $\ct(P_{\overline{n}})$ can be obtained from the description of the cohomology ring of $P_{\overline{n}}$ in \cite[Theorem 2.1]{Davis}; see \cite[Proposition 2.3]{Vandembroucq} for the lower bound. 

\begin{corollary}
    Let $\overline{n} = (n_1,n_2,\dots,n_r)$, where $n_1 \geq \dots \geq n_r \geq 1$. Then
    $
        \ct(P_{\overline{n}}) \leq n_r + r.
    $
\end{corollary}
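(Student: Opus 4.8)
The plan is to present $P_{\overline{n}}$ as a generalized projective product space and then invoke \Cref{prop: cat of gpps with base S^n}. Set $M := S^{n_1} \times \dots \times S^{n_{r-1}}$, regarded as a one-point space when $r=1$, let $\tau \colon M \to M$ be the diagonal antipodal involution $(x_1,\dots,x_{r-1}) \mapsto (-x_1,\dots,-x_{r-1})$, and let $\sigma \colon S^{n_r} \to S^{n_r}$ be the (fixed-point free) antipodal involution. Unwinding the definition of $X(M,N)$ gives a homeomorphism $X(M, S^{n_r}) \cong P_{\overline{n}}$. Since every $n_j \geq 1$, the space $M$ is path-connected, so the hypotheses of \Cref{prop: cat of gpps with base S^n} are met once we know that $X(M, S^{n_r})$ is completely normal.

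First I would dispatch the completely-normal hypothesis: the $\Z/2$-action defining $P_{\overline{n}}$ is free (an antipodal map has no fixed point), so $P_{\overline{n}}$ is a closed topological manifold, hence metrizable, hence completely normal. \Cref{prop: cat of gpps with base S^n} then gives
$$
\ct(P_{\overline{n}}) \leq \ct_{\langle \tau \rangle}^{\#}(M) + n_r .
$$

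Next I would bound $\ct_{\langle \tau \rangle}^{\#}(M)$. When $r=1$ this is $\ct^{\#}(\mathrm{pt}) = 1 \le r$ trivially. When $r \ge 2$, observe that the diagonal subgroup $\langle \tau \rangle \cong \Z/2$ is contained in the group $(\Z/2)^{r-1}$ that acts on $M$ componentwise by antipodal maps on each factor; hence the subgroup monotonicity of \Cref{prop:cat-wrtG-properties} combined with \Cref{lemma: cat-wrt-G of product of spheres} yields
$$
\ct_{\langle \tau \rangle}^{\#}(M) \leq \ct_{(\Z/2)^{r-1}}^{\#}\!\left(\prod_{j=1}^{r-1} S^{n_j}\right) = (r-1)+1 = r .
$$
Plugging $\ct_{\langle\tau\rangle}^{\#}(M) \le r$ into the previous inequality produces $\ct(P_{\overline{n}}) \leq r + n_r$, as desired.

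I do not expect a genuine obstacle here; the argument is an assembly of \Cref{prop: cat of gpps with base S^n}, \Cref{prop:cat-wrtG-properties}, and \Cref{lemma: cat-wrt-G of product of spheres}. The only points that warrant a line of care are the homeomorphism $P_{\overline{n}} \cong X(M, S^{n_r})$, the (automatic) complete normality of $P_{\overline{n}}$, and making sure the subgroup-monotonicity step is applied with the ambient group being the componentwise $(\Z/2)^{r-1}$-action rather than the diagonal one.
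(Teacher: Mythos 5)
Your proposal is correct and follows essentially the same route as the paper: identify $P_{\overline{n}}$ with $X(S^{n_1}\times\dots\times S^{n_{r-1}},S^{n_r})$ for the product antipodal involution, apply \Cref{prop: cat of gpps with base S^n}, and bound $\ct_{\langle\tau\rangle}^{\#}(M)$ by $r$ via \Cref{lemma: cat-wrt-G of product of spheres}. Your explicit treatment of the subgroup-monotonicity step (diagonal $\Z/2$ inside the componentwise $(\Z/2)^{r-1}$) and of the complete-normality hypothesis only spells out what the paper leaves implicit.
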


\begin{proof}
    Let $\tau_j$ be the antipodal involution on $S^{n_j}$ for $j=1,\dots,r$. 
    Let $M$ be the product space $S^{n_1} \times \dots \times S^{n_{r-1}}$, and let $\tau$ be the product involution $\tau_1 \times \dots \times \tau_{r-1}$ on $M$. 
    Then $P_{\overline{n}} = X(M,S^{n_r})$.
    Hence,
    \begin{align*}
        \ct(P_{\overline{n}}) 
            \leq \ct_{\langle \tau \rangle}^{\#}(S^{n_1} \times \dots \times S^{n_{r-1}}) + n_r
            \leq r + n_r.
    \end{align*}
    by \Cref{prop: cat of gpps with base S^n} and \Cref{lemma: cat-wrt-G of product of spheres}.
\end{proof}

\begin{corollary}
    Let $\overline{n} = (n_1,n_2,\dots,n_{r-1},1)$, where $n_1 \geq \dots \geq n_{r-1} \geq n_r =1$. Then
    \begin{equation}
        \label{eq: TC of proj-prod-spaces}
        \TC_k(P_{\overline{n}}) 
            \leq k + \sum_{j=1}^{r-1} \TC^{\#,*}_{k,\langle \tau_j \rangle}(S^{n_j}) - (r-2) 
            \leq kr+1,
    \end{equation}
    where $\tau_j$ is the antipodal involution on $S^{n_j}$ for $j=1,\dots,r$.
\end{corollary}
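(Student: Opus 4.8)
The plan is to exhibit $P_{\overline{n}}$ as a generalized projective product space with base $S^1$ and then feed it into \Cref{prop: cat and tck of gpps with base S1}, afterwards estimating the resulting equivariant invariant by iterating the product inequality. Set $M := S^{n_1}\times\cdots\times S^{n_{r-1}}$ and let $\tau := \tau_1\times\cdots\times\tau_{r-1}$ be the diagonal antipodal involution on $M$. Since $n_r = 1$, the last sphere is $S^1$ and $\sigma := \tau_r$ is its (fixed-point free) antipodal involution, so $P_{\overline{n}} = X(M,S^1)$. As $M$ and $S^1$ are closed manifolds, $X(M,S^1)$ is a closed manifold, hence metrizable; in particular it is Hausdorff and all of its finite powers are completely normal. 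Thus \Cref{prop: cat and tck of gpps with base S1} applies and gives
\[
    \TC_k(P_{\overline{n}}) \le k + \TC_{k,\langle\tau\rangle}^{\#,*}(M).
\]

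The next step is to bound $\TC_{k,\langle\tau\rangle}^{\#,*}(M)$. Let $G := \langle\tau_1\rangle\times\cdots\times\langle\tau_{r-1}\rangle$ act on $M$ componentwise. The cyclic group $\langle\tau\rangle$ embeds into $G$ as the diagonal subgroup, and the restriction of the $G$-action to this subgroup is precisely the $\tau$-action; hence \Cref{prop:new-TC-properties}~(4) yields $\TC_{k,\langle\tau\rangle}^{\#,*}(M) \le \TC_{k,G}^{\#,*}(M)$. Writing $M = S^{n_1}\times\bigl(S^{n_2}\times\cdots\times S^{n_{r-1}}\bigr)$ and inducting on $r$, the strong product inequality of \Cref{prop:prod-ineq-tck-wrt_G} (whose complete-normality hypothesis holds because all the relevant powers are again closed manifolds) gives
\[
    \TC_{k,G}^{\#,*}(M) \le \sum_{j=1}^{r-1} \TC_{k,\langle\tau_j\rangle}^{\#,*}(S^{n_j}) - (r-2).
\]
Combining the two displays gives the first inequality in \eqref{eq: TC of proj-prod-spaces}.

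For the second inequality I would invoke \Cref{example:TC(S^n)-wrt-G}, which gives $\TC_{k,\langle\tau_j\rangle}^{\#,*}(S^{n_j}) \le k+1$ for each $j$ (a finite group acting on a sphere of dimension at least $1$), so that
\[
    \TC_k(P_{\overline{n}}) \le k + (r-1)(k+1) - (r-2) = kr + 1.
\]

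The argument is essentially an assembly of results already established, so the points requiring care are minor: checking that the diagonal embedding $\langle\tau\rangle \hookrightarrow G$ recovers exactly the product involution on $M$ (so that the subgroup monotonicity of \Cref{prop:new-TC-properties} is legitimately applicable), and the bookkeeping that every space appearing in the induction is completely normal. Both are routine because every space in sight is a closed manifold, so I do not expect a genuine obstacle.
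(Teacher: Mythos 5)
Your proposal is correct and follows essentially the same route as the paper: identify $P_{\overline{n}} = X(S^{n_1}\times\cdots\times S^{n_{r-1}},S^1)$, apply \Cref{prop: cat and tck of gpps with base S1}, then iterate \Cref{prop:prod-ineq-tck-wrt_G} and bound each factor by $k+1$ via \Cref{example:TC(S^n)-wrt-G}. Your explicit use of \Cref{prop:new-TC-properties}~(4) to pass from the diagonal subgroup $\langle\tau\rangle$ to the full product group before invoking the product inequality is a step the paper leaves implicit, and your verification of the normality hypotheses is likewise a welcome (if routine) addition.
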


\begin{proof}
    Let $\tau$ be the product involution $\tau_1 \times \dots \times \tau_{r-1}$ on $S^{n_1} \times \dots \times S^{n_{r-1}}$. 
    Then
    \begin{align*}
        \TC_k(P_{\overline{n}}) 
            \leq k + \TC^{\#,*}_{k,\langle \tau \rangle}(S^{n_1} \times \dots \times S^{n_{r-1}})
            \leq k +\sum_{j=1}^{r-1} \TC^{\#,*}_{k,\langle \tau_j \rangle}(S^{n_j}) - (r-2).
    \end{align*}
    by \Cref{prop: cat and tck of gpps with base S1} and \Cref{prop:prod-ineq-tck-wrt_G}. 
    Then 
    \begin{equation}
        \TC_k(P_{\overline{n}}) \leq k + (r-1)(k+1) - (r-2) \leq kr+1.
    \end{equation}
    This follows from the fact that $\TC^{\#,*}_{k,\langle \tau_j \rangle}(S^{n_j}) \leq k+1$, see \Cref{example:TC(S^n)-wrt-G}. 
\end{proof}

\begin{remark} The inequality \eqref{eq: TC of proj-prod-spaces} may be compared with earlier results in the literature as follows:
    \begin{enumerate}
        \item Suppose $n_r=1$ and $n_j$ is even for some $j = 1,\dots,r-1$. For $k=2$, the inequality \eqref{eq: TC of proj-prod-spaces} offers a stronger upper bound on $\TC_2(P_{\overline{n}})$ than the one given in \cite[Theorem 3.8]{eqtcprodineq}, which gives $\TC_2(P_{\overline{n}}) \leq 2r+2s$, where $s$ is the number of spheres $S^{n_j}$ with $n_j$ even.
        \item Suppose $n_r=1$.
        For $k=2$, the inequality \eqref{eq: TC of proj-prod-spaces} is weaker than the upper bound given in \cite[Theorem 1.3]{Vandembroucq}, which gives $\TC_2(P_{\overline{n}}) \leq r+s+1$, where $s$ is the number of spheres $S^{n_j}$ with $n_j$ even.
    \end{enumerate}
\end{remark}

Consider an involution $\tau_j$ on $S^{n_j} \subset \R^{n_j+1}$ defined as follows:
\begin{equation}\label{eq: invo prodsphere}
    \tau_j( (y_1, \ldots, y_{p_j}, y_{p_j+1}, \ldots, y_{n_j+1}) )
        := (y_1, \ldots, y_{p_j}, -y_{p_j+1}, \ldots, -y_{n_j+1}),  
\end{equation}
for some $0 \leq p_j \leq n_j$ and $1\leq j\leq r$. Note that if $p_j=0$, then $\tau_j$ acts antipodally on $S^{n_j}$; and if $p_j=n_j$, then $\tau_j$ is a reflection across the hyperplane $y_{n_j+1}=0$ in $\R^{n_j+1}$. Then we have $\Z_2$-action on the product $S^{n_1}\times \dots \times S^{n_{r}}$ via the product involution 
\begin{equation}\label{eq: invo prod taui}
    \tau :=\tau_1\times \dots \times \tau_{r}\,.
\end{equation}

Let $N$ be a topological space with a free involution $\sigma$. Consider the identification space: 
\begin{equation}\label{eq:prodsphere_N}
    X((n_1, p_1), \ldots, (n_r,p_r), N) 
        :=\frac{ S^{n_1}\times \cdots \times S^{n_r}\times N}{({\bf x}_1, \dots, {\bf x}_{r}, y)\sim (\tau_1({\bf x}_1),\dots ,\tau_r({\bf x}_r), \sigma(y))},
\end{equation}
where $\tau_j$ is a reflection defined as in \eqref{eq: invo prodsphere} for $1\leq j\leq r$. So, $X((n_1, p_1), \ldots, (n_r,p_r), N)$ is a generalized projective product space.

\begin{theorem}
\label{thm:tck-main-example}
Let $1 \leq p_j \leq n_j$ for $j=1, \ldots, r$. Then
\begin{equation}
\label{eq:cat-gpps1}
    \mathrm{cup}_{\mathbb{Z}_2}(N/\left< \sigma \right>)+r + 1 \leq  \ct(X((n_1, p_1), \ldots, (n_r,p_r), N)),\, \text{and}
\end{equation}  
\begin{equation}
\label{eq:seqtc-gpps1}
    \zl_k(N/\left< \sigma \right>) +(k-1)r+1 \leq  \TC_k(X((n_1, p_1), \ldots, (n_r,p_r), N)).
\end{equation}  
In particular, 
\begin{itemize}
    \item if $N=S^n$ and $\sigma \colon S^n \to S^n$ is the antipodal involution, then
\begin{equation}
\label{eq:cat-gpps1S1}
      \ct(X((n_1, p_1), \ldots, (n_r,p_r), S^n)) = r+n+1,\,\text{and}   
\end{equation}
\item if $N=S^1$ and $\sigma \colon S^1 \to S^1$ is the antipodal involution, then
\begin{equation}
\label{eq:seqtc-gpps1S1}
    (k-1)(r+1)+1\leq  \TC_k(X((n_1, p_1), \ldots, (n_r,p_r), S^1))\leq k(r+1)+1.   
\end{equation}
\end{itemize}
\end{theorem}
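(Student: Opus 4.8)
The plan is to derive the two inequalities \eqref{eq:cat-gpps1} and \eqref{eq:seqtc-gpps1} from the $\mathbb{Z}_2$-cohomology ring of $X := X((n_1,p_1),\dots,(n_r,p_r),N)$. By \cite[Theorem 4.1]{DaundSarkargpps} (which is where the hypothesis $1\le p_j\le n_j$ enters, forcing each $\tau_j$ to be a nontrivial reflection with nonempty fixed set), one has a ring isomorphism
\[
H^*(X;\mathbb{Z}_2)\;\cong\;H^*\!\big(N/\langle\sigma\rangle;\mathbb{Z}_2\big)\otimes_{\mathbb{Z}_2}\Lambda(\beta_1,\dots,\beta_r),
\]
with $\Lambda(\beta_1,\dots,\beta_r)$ the exterior algebra on classes $\beta_j$ of positive degree. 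Writing $B_0:=N/\langle\sigma\rangle$ and picking a nonzero product $\alpha_1\cdots\alpha_c$ of positive-degree classes with $c=\mathrm{cup}_{\mathbb{Z}_2}(B_0)$, the element $(\alpha_1\cdots\alpha_c)\otimes(\beta_1\cdots\beta_r)$ is nonzero, so $\mathrm{cup}_{\mathbb{Z}_2}(X)\ge c+r$, and \Cref{prop: cup leq cat} yields \eqref{eq:cat-gpps1}.

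For \eqref{eq:seqtc-gpps1} I would use that, under the identification $H^*(X)^{\otimes k}\cong H^*(B_0)^{\otimes k}\otimes\Lambda^{\otimes k}$, the $k$-fold cup product factors as $\cup_k^{B_0}\otimes\phi_k$, where $\phi_k\colon\Lambda^{\otimes k}\to\Lambda$ is the $k$-fold product. Thus $\ker(\cup_k^X)\supseteq\ker(\cup_k^{B_0})\otimes\Lambda^{\otimes k}+H^*(B_0)^{\otimes k}\otimes\ker(\phi_k)$, and multiplying elements of the form $u\otimes 1$ and $1\otimes v$ (a product which stays nonzero over the field $\mathbb{Z}_2$) gives $\zl_k(X)\ge\zl_k(B_0)+\nil(\ker\phi_k)$. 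The computational core is $\nil(\ker\phi_k)\ge(k-1)r$: denoting by $y_j^i\in\Lambda^{\otimes k}$ the tensor with $\beta_j$ in slot $i$ and $1$ elsewhere, each $y_j^{i-1}+y_j^i$ lies in $\ker\phi_k$, and I would prove that $P:=\prod_{j=1}^r\prod_{i=2}^k(y_j^{i-1}+y_j^i)$ is nonzero by showing that in its expansion the monomial $1\otimes(\beta_1\cdots\beta_r)^{\otimes(k-1)}$ comes from a unique choice of summands and hence is not killed in characteristic $2$. Then \Cref{prop: zcl < TC} gives \eqref{eq:seqtc-gpps1}.

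It remains to establish the two sharp statements. Put $M:=S^{n_1}\times\cdots\times S^{n_r}$ and $\tau:=\tau_1\times\cdots\times\tau_r$, so that $X((n_1,p_1),\dots,(n_r,p_r),N)=X(M,N)$; since $\langle\tau\rangle$ is a subgroup of $\langle\tau_1\rangle\times\cdots\times\langle\tau_r\rangle$, \Cref{prop:cat-wrtG-properties} and \Cref{lemma: cat-wrt-G of product of spheres} give $\ct_{\langle\tau\rangle}^{\#}(M)\le r+1$. For $N=S^n$ with antipodal $\sigma$, everything in sight is a closed manifold (so the completely-normal hypotheses hold), and \Cref{prop: cat of gpps with base S^n} gives $\ct(X(M,S^n))\le\ct_{\langle\tau\rangle}^{\#}(M)+n\le r+n+1$, while \eqref{eq:cat-gpps1} together with $\mathrm{cup}_{\mathbb{Z}_2}(\mathbb{R}P^n)=n$ gives the matching lower bound, proving \eqref{eq:cat-gpps1S1}. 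For $N=S^1$ with antipodal $\sigma$, \Cref{prop: cat and tck of gpps with base S1} gives $\TC_k(X(M,S^1))\le k+\TC_{k,\langle\tau\rangle}^{\#,*}(M)\le k\,\ct_{\langle\tau\rangle}^{\#}(M)+1\le k(r+1)+1$, and \eqref{eq:seqtc-gpps1} together with $\zl_k(\mathbb{R}P^1)=\zl_k(S^1)=k-1$ gives the lower bound $(k-1)(r+1)+1$, proving \eqref{eq:seqtc-gpps1S1}.

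The hard part is the non-vanishing of $P$ in $\Lambda^{\otimes k}$: a summand of $P$ is indexed by a choice $\epsilon(j,i)\in\{i-1,i\}$ over all $j\in\{1,\dots,r\}$ and $i\in\{2,\dots,k\}$, it is nonzero only if for each fixed $j$ the slots $\epsilon(j,2),\dots,\epsilon(j,k)$ are pairwise distinct (since $\beta_j^2=0$), and one must check that the only choice producing a monomial that omits slot $1$ for every $j$ is $\epsilon(j,i)=i$, so that the distinguished monomial occurs exactly once and survives over $\mathbb{Z}_2$. Apart from this bookkeeping, the proof is an assembly of \Cref{prop: cup leq cat}, \Cref{prop: zcl < TC}, \Cref{prop:cat-wrtG-properties}, \Cref{lemma: cat-wrt-G of product of spheres}, \Cref{prop: cat of gpps with base S^n}, \Cref{prop: cat and tck of gpps with base S1}, and the cohomology ring of \cite[Theorem 4.1]{DaundSarkargpps}.
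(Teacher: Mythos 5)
Your proposal is correct and follows essentially the same route as the paper: the cup-length/zero-divisor lower bounds come from the tensor-product description of $H^*(X;\mathbb{Z}_2)$ in \cite[Theorem 4.1]{DaundSarkargpps} together with the same product $P=\prod_{j}\prod_{i}(y_j^{i-1}+y_j^i)$ (your slot-counting argument for its nonvanishing is just a more explicit version of the paper's remark that the monomial $\prod_j\prod_i y_j^i$ cannot be cancelled), and the upper bounds are assembled from \Cref{prop: cat of gpps with base S^n}, \Cref{prop: cat and tck of gpps with base S1}, and \Cref{lemma: cat-wrt-G of product of spheres} exactly as in the paper. The only cosmetic difference is that you re-derive \eqref{eq:cat-gpps1} from the ring structure where the paper simply cites \cite[Proposition 4.3]{DaundSarkargpps}.
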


\begin{proof}
The inequality \eqref{eq:cat-gpps1} follows from \cite[Proposition 4.3]{DaundSarkargpps}.
Hence, by \eqref{eq:cat-gpps1}, we get 
$$
    n+r+1 \leq \ct(X((n_1, p_1), \ldots, (n_r,p_r), S^n))
$$
since $\mathrm{cup}_{\mathbb{Z}_2}(\R P^n) = n$.
Thus, the equality \eqref{eq:cat-gpps1S1} follows from \Cref{prop: cat of gpps with base S^n} and \Cref{lemma: cat-wrt-G of product of spheres} since 
$$
    \ct(X((n_1, p_1), \ldots, (n_r,p_r), S^n)) 
        \leq \ct_{\left< \tau \right>}^{\#}(\prod_{j=1}^r S^{n_j}) + n 
        = (r+1)+n = r+n+1.
$$

We now show the inequalities concerning the sequential topological complexity starting with the inequality \eqref{eq:seqtc-gpps1}. Note \cite[Theorem 4.1]{DaundSarkargpps} states that
\[
    H^{*}(X((n_1, p_1), \ldots, (n_r,p_r), N);\mathbb{Z}_2) \cong H^{*}(N/\left< \sigma \right>;\mathbb{Z}_2) \otimes_{\mathbb{Z}_2} \Lambda(\beta_1,\dots,\beta_r) 
\]
where $\Lambda(\beta_1,\dots,\beta_r)$ is the exterior algebra on $r$ generators over $\mathbb{Z}_2$.
Thus, we have
\[
    \zl_k(X((n_1, p_1), \ldots, (n_r,p_r), N))
        \geq \zl_k(N/\left< \sigma \right>)+\nil(\ker(\phi_k)),
\]
where
\[
  \phi_k:  \Lambda(\beta_1,\dots,\beta_r) \otimes_{\mathbb{Z}_2} \dots \otimes_{\mathbb{Z}_2} \Lambda(\beta_1,\dots,\beta_r)
        \to \Lambda(\beta_1,\dots,\beta_r)
\]
is the $k$-fold product homomorphism of $\Lambda(\beta_1,\dots,\beta_r)$. 

Let $y^i_j = 1 \otimes \dots \otimes 1 \otimes \beta_j \otimes 1 \otimes \dots \otimes 1 \in \Lambda(\beta_1,\dots,\beta_r)^{\otimes k}$, where $\beta_j$ occurs at $i^{\text{th}}$ position. 
Then $y^i_j + y^{i'}_j \in \ker(\phi_k)$ for all $1 \leq j \leq r$ and $1 \leq i,i' \leq k$, since $\phi_k(y^{i}_j+y^{i'}_j)=\beta_j + \beta_j = 0$. 
Consider the product
$$
    P := \prod_{j=1}^{r} \prod_{i=2}^{k} (y^{i-1}_j + y^i_j) \in \ker(\phi_k).
$$
This product is nonzero as this contains a nonzero term $\prod_{j=1}^{r} \prod_{i=2}^{k} y^i_j$ which can't be killed by any other term in the product $P$. 
Hence, $\nil(\ker(\phi_k)) \geq (k-1)r$.

Therefore, from \Cref{prop: zcl < TC}, we get the inequality \eqref{eq:seqtc-gpps1}. 
Now the left inequality of \eqref{eq:seqtc-gpps1S1} follows from the observation $\zl_k(\R P^1)=k-1$. 
The right inequality of \eqref{eq:seqtc-gpps1S1} follows from \Cref{prop: cat and tck of gpps with base S1} and \Cref{lemma: cat-wrt-G of product of spheres}.
\end{proof}

\begin{remark}
  Note that if $n_j=1=p_j$ for all $1\leq j\leq r$ and $N=S^1$ with the antipodal involution, then $X((n_1, p_1), \ldots, (n_r,p_r), N)$ is the $(r+1)$-dimensional Klein bottle. Then \eqref{eq:cat-gpps1S1} recovers Davis's result \cite[Corollary 2.3]{n-Klein-bottle}.  
\end{remark}

\section{Application to mapping tori}\label{sec:applica-mapping tori}

Suppose $f \colon M \to M$ is a homeomorphism of a topological space $M$. Let $M_f$ be the mapping torus corresponding to $f$, i.e., $M_f$ is the quotient space defined as
$$
M_f := \frac{M \times I}{(x,0) \sim (f(x),1)}.
$$

Suppose that $\pi_2 \colon M \times I \to I$ be the projection map on the second factor, and $q_2 \colon I \to S^1$ and ${q \colon M \times I \to M_f}$ be the natural quotient maps. 
Then there exists a continuous map ${\mathfrak{p} \colon M_f \to S^1}$, given by $\mathfrak{p}\left(\left[x,t\right]\right) = q_2(t)$, such that the diagram
\[
    \begin{tikzcd}
        M\times I \arrow[r, "\pi_2"] \arrow[d, "q"] & I \arrow[d, "q_2"]       \\
        M_f \arrow[r, "\mathfrak{p}", dotted]  & S^1
    \end{tikzcd}
\]
commutes, since $ q_2(\pi_2(f(x),1)) = q_2(1) = q_2(0) = q_2(\pi_2(x,0))$.

It is well known that $\mathfrak{p} \colon M_f \to S^1$ is a fibre bundle with fibre $M$. If $x_0 = q_2(0) = q_2(1)$, then a trivialization of $\mathfrak{p}$ over the open set $U_0 = S^1 \setminus \{x_0\}$ is given by
$$
\phi \colon M \times S^1 \setminus \{x_0\} \to q(M \times (0,1)) = \mathfrak{p}^{-1}(U_0),  \quad \phi(m,q_2(t)) = q(m,t).
$$
If $p$ is a point in $(0,1)$ and $U_p = S^1 \setminus \{q_2(p)\}$, then the map
$$
\psi_p \colon M \times S^1 \setminus \{q_2(p)\} \to q\left(M \times \left([0,p) \cup (p,1]\right)\right) = \mathfrak{p}^{-1}(U_p)
$$ 
defined by
$$
\psi_p(m,q_2(t)) =
\begin{cases}
    q(f^{-1}(m),t) & 0 \leq t <p,\\
    q(m,t) & p <  t \leq 1
\end{cases}
$$
is a trivialization of $\mathfrak{p}$ over the open set $U_p$.
Note that the transition maps $\phi^{-1} \circ \psi_p$ and $\psi_p^{-1} \circ \psi_{q}$, where we assume $p<q$ without loss of generality, are given by 
    $$
        (\phi^{-1} \circ \psi_p)(m,q_2(t)) = 
            \begin{cases}
                (f^{-1}(m),q_2(t)) & t \in (0,p),\\
                (m,q_2(t)) & t \in (p,1);
            \end{cases}
    $$
and
     $$
        (\psi_q^{-1} \circ \psi_p)(m,q_2(t)) = 
            \begin{cases}
                (m,q_2(t)) & t \in [0,p)\cup (q,1],\\
                (f(m),q_2(t)) & t \in (p,q);
            \end{cases}
    $$
respectively.

\begin{theorem}
\label{thm: cat of mapping torus}
    Suppose $f \colon M \to M$ is a homeomorphism of a topological space $M$ such that the corresponding mapping torus $M_f$ is completely normal. 
    If $\{V_1, \dots, V_n\}$ is an open cover of $M$ such that each $V_j$ is invariant under $f$, then $\ct(M_f) \leq n+1$.  
\end{theorem}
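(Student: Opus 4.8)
The strategy is to recognise $\mathfrak{p}\colon M_f\to S^1$ as a fibre bundle with fibre $M$ and structure group the cyclic group $\langle f\rangle$ generated by $f$, viewed as a subgroup of the group of homeomorphisms of $M$, and then to invoke \Cref{cor: additive ub on ct of fb}. The charts $\phi$ and $\psi_p$ written out just before the statement already witness this: the transition functions $\phi^{-1}\circ\psi_p$ and $\psi_p^{-1}\circ\psi_q$ act fibrewise by $\mathrm{id}$ or by $f^{\pm1}$, so any atlas assembled from restrictions of $\phi$ and of the maps $\psi_p$ is a $\langle f\rangle$-atlas. Moreover $S^1$ is paracompact, $M_f$ is completely normal by hypothesis, and $M_f$ is path-connected (which holds once $M$ is, and may be assumed); thus every standing hypothesis of \Cref{cor: additive ub on ct of fb} other than the choice of cover is already in place.

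The one genuinely non-formal step is the choice of cover of the base. One cannot use $U_0=S^1\setminus\{x_0\}$: although it is a contractible open arc, its closure is all of $S^1$, which is not categorical in $S^1$, so it cannot serve as one of the closed sets $R_i$. Instead, choose two open arcs $U_1,U_2$ covering $S^1$, each of arc-length strictly between one half and the full circumference, with $x_0=q_2(0)\in U_1$ and $x_0\notin\overline{U_2}$. Then $R_i:=\overline{U_i}$ is a \emph{proper} closed sub-arc of $S^1$, hence contractible, and by \Cref{rem: induced trivialization over contractible subset} the chain $U_i\subseteq R_i\subseteq S^1$ satisfies that $U_i$ is categorical in $R_i$ and $R_i$ is categorical in $S^1$. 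Over $R_2$ take $\phi_2:=\phi|_{\mathfrak{p}^{-1}(R_2)}$ (legitimate since $R_2\subseteq U_0$), and over $R_1$ pick any $p\in(0,1)$ with $q_2(p)\notin R_1$ and set $\phi_1:=\psi_p|_{\mathfrak{p}^{-1}(R_1)}$. On $R_1\cap R_2\subseteq U_0\cap U_p$ the transition $\phi_2^{-1}\circ\phi_1$ is a restriction of $\phi^{-1}\circ\psi_p$, hence fibrewise multiplication by $f^{-1}$ over the part of the base of the form $q_2(t)$ with $t<p$ and the identity over the part with $t>p$; in particular the restrictions of $\phi_1,\phi_2$ to $\overline{U_1}=R_1$ and $\overline{U_2}=R_2$ form a $\langle f\rangle$-atlas of $\mathfrak{p}$.

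Applying \Cref{cor: additive ub on ct of fb} with $m=2$, structure group $G=\langle f\rangle$ and fibre $F=M$ then gives
\[
\ct(M_f)\;\le\;2+\ct_{\langle f\rangle}^{\#}(M)-1\;=\;\ct_{\langle f\rangle}^{\#}(M)+1.
\]
Finally, since each $V_j$ is invariant under $f$ it is invariant under every power of $f$, so $\{V_1,\dots,V_n\}$ is a $\langle f\rangle$-invariant categorical open cover of $M$, and therefore $\ct_{\langle f\rangle}^{\#}(M)\le n$. Combining this with the displayed inequality yields $\ct(M_f)\le n+1$.

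The main obstacle is precisely the bookkeeping of the second paragraph: one must choose the cover $\{U_1,U_2\}$ and the trivializations $\{\phi_1,\phi_2\}$ so that, simultaneously, each $R_i$ is a proper (hence contractible, hence categorical) closed arc, the $U_i$ still cover $S^1$, and the chosen trivializations restrict to a $G$-atlas over the $\overline{U_i}$. Once this is arranged, the remainder is a direct appeal to \Cref{cor: additive ub on ct of fb}, \Cref{rem: induced trivialization over contractible subset}, and the explicit chart formulas for $\mathfrak{p}$ recorded immediately above the theorem.
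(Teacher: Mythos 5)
Your proof is correct and follows essentially the same route as the paper's: the paper likewise covers $S^1$ by two proper closed arcs over which $\mathfrak{p}$ is trivialized by restrictions of $\phi$ and $\psi_p$, checks from the explicit $f^{\pm1}$/identity form of the transition maps that each $V_j\times(C_1\cap C_2)$ is preserved, and applies \Cref{thm: additive upper bound for ls category}; your detour through \Cref{cor: additive ub on ct of fb} with $G=\langle f\rangle$ and $\ct^{\#}_{\langle f\rangle}(M)\le n$ is the same argument packaged one level higher. Be aware, though, that both you and the paper silently use that each $V_j$ is categorical in $M$ (you assert it when calling the cover a ``$\langle f\rangle$-invariant categorical open cover''), a hypothesis required by \Cref{thm: additive upper bound for ls category} but missing from the theorem statement as written — without it the claim fails already for $n=1$, $V_1=M$.
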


\begin{proof}
    Let $\{C_1,C_2\}$ be the closed cover of $S^1$ defined in \Cref{prop: cat and tck of gpps with base S1}. 
    Then the fibre bundle $M \hookrightarrow M_f \xrightarrow{\mathfrak{p}} S^1$ is trivial over $C_1$ and $C_2$ with trivializations given by the restrictions of $\phi$ and $\psi_p$ (for some choice of $p$ in the interval $(0,1)$), respectively. 
    Then, by the description of transition maps of $\mathfrak{p}$, we obtain that $V_j \times (C_1 \cap C_2)$ is invariant under $\phi^{-1} \circ \psi_p$ (and hence also under $\psi_p^{-1} \circ \phi$, as $\phi^{-1} \circ \psi_p$ is a homeomorphism), since the $V_j$'s are invariant under $f$ (and hence also under $f^{-1}$, as $f$ is a homeomorphism).
    Let $\{C_1^{\mathrm{o}},C_2^{\mathrm{o}}\}$ be the open cover of $S^1$, where $C_r^{\mathrm{o}}$ is the interior of $C_r$.
    Since $C_r$ is contractible, the chain of inclusions $C_r^{\mathfrak{o}} \subseteq C_r \subseteq S^1$ satisfies the condition that $C_r^{\mathfrak{o}}$ is categorical in $C_r$ and $C_r$ is categorical in $S^1$, see \Cref{rem: induced trivialization over contractible subset}.
    Thus, the hypotheses of \Cref{thm: additive upper bound for ls category} are satisfied since $\overline{C_r^{\mathrm{o}}} = C_r$, and we obtain $\ct(M_f) \leq 2 + n -1 = n+1$.
\end{proof}

\begin{theorem}
\label{thm: TC of mapping torus}
    Suppose $f \colon M \to M$ is a homeomorphism of a topological space $M$ and $M_f$ is the corresponding mapping torus. 
    If $(M_f)^k$ is completely normal and $\{V_1, \dots, V_n\}$ is an open cover of $M^k$ such that 
    \begin{enumerate}
        \item[(i)] each $V_j$ admits a continuous section of the free path space fibration $e_{k,M}$, and
        \item[(ii)] each $V_j$ is invariant under $f^k := f \times \dots \times f$,
    \end{enumerate}
    then $\TC_k(M_f) \leq k+n$.
\end{theorem}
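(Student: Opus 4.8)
The plan is to mirror the proof of \Cref{thm: cat of mapping torus}, replacing the LS-category input \Cref{thm: additive upper bound for ls category} by its sequential-complexity analogue \Cref{additive inequality for TC for fibre bundles} (equivalently, using \Cref{cor:seqtc-ub-using-gcat}), applied to the fibre bundle $M \hookrightarrow M_f \xrightarrow{\mathfrak{p}} S^1$. The first observation is that $\mathfrak{p}$ has structure group the cyclic group $\langle f \rangle$ generated by $f$: the trivializations $\phi$ over $U_0 = S^1 \setminus \{x_0\}$ and $\psi_p$ over $U_p = S^1 \setminus \{q_2(p)\}$ recalled just above have transition maps given piecewise by $\mathrm{id}$ and $f^{\pm 1}$, so they assemble into a $\langle f \rangle$-atlas, and the fibre $M$ is regarded as the $\langle f \rangle$-space in question.

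Next I would import the base-space bookkeeping from the proof of \Cref{prop: cat and tck of gpps with base S1}: take the closed cover $\{R_1, \dots, R_{k+1}\}$ of $S^1$ whose complements $R_i^c = \{e^{\sqrt{-1}\,\theta} : \frac{2(i-1)\pi}{k+1} < \theta < \frac{(2i-1)\pi}{k+1}\}$ are pairwise disjoint open arcs, put $U_i = \mathrm{int}(R_i)$, and note that each point of $S^1$ lies in at least $k$ of the $U_i$, so $\{U_1^k, \dots, U_{k+1}^k\}$ is an open cover of $(S^1)^k$; since each $U_i$ is a contractible arc, $U_i^k$ is contractible and carries a section $s_i \colon U_i^k \to (S^1)^I$ of $e_{k,S^1}$ with image inside $R_i^I$. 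For each $i$ pick $p_i \in (0,1)$ with $q_2(p_i) \in R_i^c$ and take $\phi_i := \psi_{p_i}|_{\mathfrak{p}^{-1}(R_i)} \colon \mathfrak{p}^{-1}(R_i) \to R_i \times M$; since the $R_i$ are closed and the $\phi_i$ lie in the $\langle f \rangle$-atlas, \Cref{remark for additive inequality of TC for fibre bundles} applies, so the induced trivializations of $(M_f)^k \to (S^1)^k$ over the $U_i^k$ extend over $\overline{U_i^k} = R_i^k$. Feeding the given cover $\{V_1, \dots, V_n\}$ of $M^k$, with its sections $t_j$ of $e_{k,M}$ and with each $V_j$ invariant under $f^k = f \times \cdots \times f$, into \Cref{additive inequality for TC for fibre bundles} as the fibre motion-planning cover then yields $\TC_k(M_f) \le (k+1) + n - 1 = k + n$.

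The delicate step, which I expect to be the main obstacle, is verifying hypothesis (ii) of \Cref{additive inequality for TC for fibre bundles}: that $(\overline{U_i^k} \cap \overline{U_{i'}^k}) \times V_j = (R_i \cap R_{i'})^k \times V_j$ is invariant under the transition map $h_{i'} \circ h_i^{-1}$. By \Cref{remark for additive inequality of TC for fibre bundles}(ii) this transition is $(\vec{x}, \vec{y}) \mapsto (\vec{x}, (t_{ii'}(x_1)y_1, \dots, t_{ii'}(x_k)y_k))$ for a locally constant $t_{ii'} \colon R_i \cap R_{i'} \to \langle f \rangle$, so one must read off, from the explicit transition functions of the mapping-torus atlas recalled before the theorem, which powers of $f$ occur over the various pieces of $\overline{U_i^k} \cap \overline{U_{i'}^k}$, and check that hypothesis (ii) keeps each such piece times $V_j$ invariant under $h_{i'} \circ h_i^{-1}$; this is the point where the invariance of the $V_j$ under $f^k$ is used. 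Once this is in place, the disjointification argument in the proof of \Cref{additive inequality for TC for fibre bundles} — which is where complete normality of $(M_f)^k$ enters — goes through unchanged and delivers a cover of $(M_f)^k$ by $k+n$ open sets each admitting a sequential motion planner.
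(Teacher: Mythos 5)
Your proposal tracks the paper's own proof essentially step for step: the same identification of $M_f\xrightarrow{\mathfrak{p}} S^1$ as a bundle with $\langle f\rangle$-atlas built from $\phi$ and the $\psi_p$'s, the same closed cover $\{R_i\}_{i=1}^{k+1}$ of $S^1$ and open cover $\{U_i^k\}$ of $(S^1)^k$ imported from \Cref{prop: cat and tck of gpps with base S1}, the same extension of the induced trivializations over $\overline{U_i^k}=R_i^k$ via \Cref{remark for additive inequality of TC for fibre bundles}, and the same final appeal to \Cref{additive inequality for TC for fibre bundles} to get $(k+1)+n-1=k+n$. In terms of strategy you are exactly aligned with the paper.

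The one step you defer, however---checking that $(R_i\cap R_{i'})^k\times V_j$ is invariant under $h_{i'}\circ h_i^{-1}$---is precisely where the argument is delicate, and your remark that ``this is the point where the invariance of the $V_j$ under $f^k$ is used'' glosses over a real difficulty. The overlap $R_i\cap R_{i'}$ is a disjoint union of two arcs, and with the trivializations $\psi_{p_i},\psi_{p_{i'}}$ the transition function $t_{i'i}$ equals $\mathrm{id}$ on one component and $f^{\pm1}$ on the other; it is locally constant but \emph{not} constant. Hence on $(R_i\cap R_{i'})^k$ the induced transition of $\mathfrak{p}^k$ sends $(\vec{x},\vec{y})$ to $\bigl(\vec{x},f^{a_1}y_1,\dots,f^{a_k}y_k\bigr)$ with exponents $a_l\in\{0,\pm1\}$ that depend on which component each $x_l$ lies in and may differ from coordinate to coordinate. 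Invariance of $V_j$ under the diagonal map $f\times\cdots\times f$ does not imply invariance under such mixed maps: for instance $V=\{(z,w)\in S^1\times S^1\mid z\neq -w\}$, a standard motion-planning domain for $e_{2,S^1}$, is invariant under $\rho\times\rho$ for any rotation $\rho$ but not under $\rho\times\mathrm{id}$. What the argument actually needs is that each $V_j$ be invariant under $f$ applied in each coordinate separately, i.e.\ $\langle f\rangle^k$-invariance, which is exactly the hypothesis under which \Cref{cor:seqtc-ub-using-gcat} applies with $G=\langle f\rangle$ and $\TC^{\#,*}_{k,\langle f\rangle}(M)\leq n$. To be fair, the paper's own proof disposes of this verification with the same one-sentence assertion, so you are in the same position as the authors; but to make the write-up airtight you would need either to strengthen hypothesis (ii) to componentwise invariance or to supply an argument for why the diagonal hypothesis suffices for these particular transition maps (it does not appear to).
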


\begin{proof}
    Let $\{R_i\}_{i=1}^{k+1}$ be the closed cover of $S^1$ defined in \Cref{prop: cat and tck of gpps with base S1}. 
    Then the fibre bundle $M \hookrightarrow M_f \xrightarrow{\mathfrak{p}} S^1$ is trivial over each $R_i$ with trivializations given by the restrictions of $\phi$ (resp. $\psi_p$ for some choice of $p \in S^1$) if $x_0 \not\in R_i$ (resp. $x_0 \in R_i)$.
    Let $\{U_i\}_{i=1}^{k+1}$ be the open cover of $(S^1)^k$ defined in \Cref{prop: cat and tck of gpps with base S1}.
    Then these covers satisfy the condition (i) of \Cref{additive inequality for TC for fibre bundles} for the fibre bundle $\mathfrak{p}$ as showed in \Cref{prop: cat and tck of gpps with base S1}.
    Moreover, by \Cref{remark for additive inequality of TC for fibre bundles}, the induced trivializations (see \Cref{def: induced trivialization for TC} and \Cref{additive inequality for TC for fibre bundles}) extend to local trivializations over $\overline{U}_i$.
    Then, by the description of transition maps of $\mathfrak{p}$, the transition maps of $\mathfrak{p}^k$ corresponding to the induced trivializations preserves the $V_j \times (\overline{U}_i \cap \overline{U}_{i'})$ since $V_j$'s are invariant under $f^k$ (and hence also $(f^{-1})^k$ since $f$ is a homeomorphism) and the induced trivializations are the restrictions of $\phi^k$ or $\psi_p^k$. 
    Hence, the condition (ii) of \Cref{additive inequality for TC for fibre bundles} is satisfied, and we get $\TC_k(M_f) \leq (k+1)+n-1 = k+n$.
\end{proof}

Note that the theorems above are more general than \Cref{prop: cat and tck of gpps with base S1}, in the sense that every fibre bundle over $S^1$ can be realized as a mapping torus of some homeomorphism of the fibre. For completeness, we state the next theorem, which provides a lower bound for the topological complexity of mapping tori defined using diffeomorphisms.
We thank the reviewer for suggesting that we include the fixed-point condition, which was missing from the original statement of the following theorem.


\begin{theorem}[{\cite[Corollary 2.8]{Mapping-theorems-for-TC}}]
    Let $\phi \colon M \to M$ be a diffeomorphism of a connected, smooth manifold $M$ having a fixed point, and let $M_\phi$ denote the mapping torus. Then $\TC(M_\phi) \geq \ct(M \times S^1)$.
\end{theorem}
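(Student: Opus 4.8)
The plan is to realise this as an instance of a mapping-type lower bound for fibrations admitting a section, applied to the bundle projection $\mathfrak{p}\colon M_\phi\to S^1$. First I would record the standard setup for the mapping torus: $\mathfrak{p}$, given by $[x,t]\mapsto q_2(t)$ as in \Cref{sec:applica-mapping tori}, is a fibre bundle (in particular a Hurewicz fibration) with fibre $M$; and a fixed point $x_0$ of $\phi$ produces a section $\sigma\colon S^1\to M_\phi$ of $\mathfrak{p}$, namely $\sigma(q_2(t))=[x_0,t]$, which is well defined since $[x_0,1]=[\phi(x_0),1]=[x_0,0]$. Write $\iota\colon M\hookrightarrow M_\phi$ for the inclusion of the fibre $\mathfrak{p}^{-1}(q_2(0))$ and choose basepoints $b_0:=q_2(0)\in S^1$ and $f_0:=\iota^{-1}(\sigma(b_0))=x_0\in M$. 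Since $M$ is connected (hence path-connected) and $S^1$ is path-connected, the theorem will follow from the general assertion: \emph{if $p\colon E\to B$ is a Hurewicz fibration with path-connected fibre $F$ over a path-connected base $B$ admitting a section, then $\TC(E)\ge \ct(F\times B)$.}

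To prove this assertion I would argue as follows. Recall $\TC(E)=\secat(e_{2,E})$ for the free path fibration $e_{2,E}\colon E^I\to E\times E$, $\gamma\mapsto(\gamma(0),\gamma(1))$. Restricting its base to the subspace $Z:=\iota(F)\times\sigma(B)\subseteq E\times E$ (homeomorphic to $F\times B$ via $\iota\times\sigma$) yields the fibration $q\colon E'\to F\times B$, where $E'=\{\gamma\in E^I\mid \gamma(0)\in\iota(F),\ \gamma(1)\in\sigma(B)\}$ and $q(\gamma)=(\iota^{-1}\gamma(0),\sigma^{-1}\gamma(1))$; restricting motion planners gives $\secat(q)\le\secat(e_{2,E})=\TC(E)$. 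It remains to show $\ct(F\times B)\le\secat(q)$, and for this I would prove that every open $U\subseteq F\times B$ over which $q$ has a section $\tau$ is categorical in $F\times B$. The value $\tau(f,b)$ is a path in $E$ from $\iota(f)$ to $\sigma(b)$; applying $p$ yields a path $p\circ\tau(f,b)$ in $B$ from $b_0$ to $b$, and $(f,b,s)\mapsto (p\circ\tau(f,b))(1-s)$ is a homotopy from $\mathrm{pr}_B|_U$ to the constant map $b_0$. For the $F$-direction, set $\delta(f,b):=\sigma\circ(p\circ\tau(f,b))$ (a path in $\sigma(B)$ from $\iota(f_0)=\sigma(b_0)$ to $\sigma(b)$) and consider $\Gamma(f,b):=\tau(f,b)*\overline{\delta(f,b)}$, a path in $E$ from $\iota(f)$ to $\iota(f_0)$ with $p\circ\Gamma(f,b)=(p\circ\tau(f,b))*\overline{(p\circ\tau(f,b))}$, a loop at $b_0$ carrying its canonical null-homotopy rel endpoints. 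Lifting this family of null-homotopies through the fibration $p$, keeping the two endpoints fixed at $\iota(f)$ and $\iota(f_0)$ throughout (possible by the homotopy lifting property for the pair $(I,\partial I)$), deforms $\Gamma(f,b)$ to a path lying in $p^{-1}(b_0)=\iota(F)$, i.e.\ to $\iota(\epsilon(f,b))$ for a path $\epsilon(f,b)$ in $F$ from $f$ to $f_0$; then $(f,b,s)\mapsto \epsilon(f,b)(s)$ is a homotopy from $\mathrm{pr}_F|_U$ to the constant map $f_0$. Combining the two homotopies shows $U\hookrightarrow F\times B$ is null-homotopic, whence $\ct(F\times B)\le\secat(q)\le\TC(E)$.

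Applying this with $E=M_\phi$, $B=S^1$, $F=M$ gives $\TC(M_\phi)\ge\ct(M\times S^1)$. The main obstacle I expect is the last parametrised step: the homotopy-lifting that pulls each $\Gamma(f,b)$ into the fibre must be carried out continuously in $(f,b)\in U$, which means organising it as a single lifting problem for the endpoint-restricted path fibration of $p$ and checking that the resulting maps are continuous for the compact-open topology; the remainder is bookkeeping. I would also note where the hypotheses are used: connectedness of $M$ makes $\ct(M\times S^1)$ amenable to the based-path description and lets the basepoints be chosen compatibly with $\sigma$, while the manifold/diffeomorphism hypotheses serve only to guarantee that $\mathfrak{p}$ is an honest fibre bundle; the fixed-point hypothesis is essential, as it is exactly what yields the section $\sigma$, and the statement fails without it (the crude bound $\TC(M_\phi)\ge\ct(M_\phi)$ does not suffice).
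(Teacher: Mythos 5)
Your proposal is correct, and there is nothing in the paper to compare it against: the statement is quoted from the literature (\cite[Corollary 2.8]{Mapping-theorems-for-TC}) and no proof is given here, so your argument stands as a self-contained justification. The structure is sound: the fixed point $x_0$ of $\phi$ yields a genuine section $\sigma(q_2(t))=[x_0,t]$ of $\mathfrak{p}\colon M_\phi\to S^1$ (well-defined precisely because $\phi(x_0)=x_0$, which is why the hypothesis cannot be dropped); the inequality $\secat(q)\le\TC(E)$ for the path fibration restricted over the subspace $\iota(F)\times\sigma(B)$ is the standard subspace sectional category bound; and your verification that a domain of a section of $q$ is categorical in $F\times B$ is correct, since a map into a product is null-homotopic exactly when both coordinate projections are. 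The only point that genuinely needs care is the one you flag: deforming the family $\Gamma(f,b)$ into the fibre must be done as a single relative homotopy lifting problem over the parameter space $U\times I$ with the lift prescribed on $U\times\partial I\times I$; this is legitimate because $(I,\partial I)$ is a closed cofibration and Hurewicz fibrations satisfy the relative homotopy lifting property with respect to such pairs (Str{\o}m), after which continuity of $(f,b,s)\mapsto\epsilon(f,b)(s)$ is automatic. Your closing remarks on which hypotheses are used where (connectedness of $M$ for path-connectedness of the fibre, the fixed point for the section, smoothness being inessential to the bundle structure) are also accurate.
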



\noindent\textbf{Acknowledgement}:
The author thanks the reviewer for insightful suggestions and feedback that improved the article—particularly for recommending the correct formulation of \Cref{lemma: induced trivialization for cat}, suggesting the inclusion of computations for projective product spaces, and other valuable contributions.
The authors thank Prof. John Oprea for pointing out that the fixed-point condition in \Cref{prop: TC_k-wrt-G leq TC_(k+1)-wrt-G} can be omitted, and for generously sharing his proof with us.
The first author would like to acknowledge IISER Pune - IDeaS Scholarship and Siemens-IISER Ph.D. fellowship for economical support. 
The second author acknowledges the support of NBHM through grant 0204/10/(16)/2023/R\&D-II/2789.

\bibliographystyle{plain} 
\bibliography{references}

\begin{thebibliography}{10}

\bibitem{Rudyak2014}
Ibai Basabe, Jes\'us Gonz\'alez, Yuli~B. Rudyak, and Dai Tamaki.
\newblock Higher topological complexity and its symmetrization.
\newblock {\em Algebr. Geom. Topol.}, 14(4):2103--2124, 2014.

\bibitem{BaySarkarheqtc}
Marzieh Bayeh and Soumen Sarkar.
\newblock Higher equivariant and invariant topological complexities.
\newblock {\em J. Homotopy Relat. Struct.}, 15(3-4):397--416, 2020.

\bibitem{secat}
I.~Berstein and T.~Ganea.
\newblock The category of a map and of a cohomology class.
\newblock {\em Fundam. Math.}, 50:265--279, 1962.

\bibitem{colmangranteqtc}
Hellen Colman and Mark Grant.
\newblock Equivariant topological complexity.
\newblock {\em Algebraic \& Geometric Topology}, 12(4):2299--2316, 2013.

\bibitem{CLOT}
Octav Cornea, Gregory Lupton, John Oprea, and Daniel Tanr\'{e}.
\newblock {\em Lusternik-{S}chnirelmann category}, volume 103 of {\em Mathematical Surveys and Monographs}.
\newblock American Mathematical Society, Providence, RI, 2003.

\bibitem{daundkarlens}
Navnath Daundkar.
\newblock Group actions and higher topological complexity of lens spaces.
\newblock {\em J. Appl. Comput. Topol.}, 8:2051–2067, 2024.

\bibitem{DaundSarkargpps}
Navnath Daundkar and Soumen Sarkar.
\newblock L{S}-category and topological complexity of several families of fibre bundles.
\newblock {\em Homology Homotopy Appl.}, 26(2):273--295, 2024.

\bibitem{Davis}
Donald~M. Davis.
\newblock Projective product spaces.
\newblock {\em J. Topol.}, 3(2):265--279, 2010.

\bibitem{n-Klein-bottle}
Donald~M. {Davis}.
\newblock {An \(n\)-dimensional Klein bottle}.
\newblock {\em {Proc. R. Soc. Edinb., Sect. A, Math.}}, 149(5):1207--1221, 2019.

\bibitem{Dold}
Albrecht Dold.
\newblock Erzeugende der {T}homschen {A}lgebra {${\mathcal N}$}.
\newblock {\em Math. Z.}, 65:25--35, 1956.

\bibitem{strongeqtc}
Alexander Dranishnikov.
\newblock On topological complexity of twisted products.
\newblock {\em Topology Appl.}, 179:74--80, 2015.

\bibitem{Fadelleqcat}
E.~Fadell.
\newblock The equivariant {L}justernik-{S}chnirelmann method for invariant functionals and relative cohomological index theories.
\newblock In {\em Topological methods in nonlinear analysis}, volume~95 of {\em S\'em. Math. Sup.}, pages 41--70. Presses Univ. Montr\'eal, Montreal, QC, 1985.

\bibitem{FarberTC}
Michael Farber.
\newblock Topological complexity of motion planning.
\newblock {\em Discrete Comput. Geom.}, 29(2):211--221, 2003.

\bibitem{Farbergrant}
Michael Farber and Mark Grant.
\newblock Robot motion planning, weights of cohomology classes, and cohomology operations.
\newblock {\em Proc. Amer. Math. Soc.}, 136(9):3339--3349, 2008.

\bibitem{F-O}
Michael Farber and John Oprea.
\newblock Sequential parametrized topological complexity and related invariants.
\newblock {\em Algebr. Geom. Topol.}, 24(3):1755--1780, 2024.

\bibitem{Vandembroucq}
Seher Fi\c{s}ekci and Lucile Vandembroucq.
\newblock On the {LS}-category and topological complexity of projective product spaces.
\newblock {\em J. Homotopy Relat. Struct.}, 16(4):769--780, 2021.

\bibitem{eqtcprodineq}
Jes\'us Gonz\'alez, Mark Grant, Enrique Torres-Giese, and Miguel Xicot\'encatl.
\newblock Topological complexity of motion planning in projective product spaces.
\newblock {\em Algebr. Geom. Topol.}, 13(2):1027--1047, 2013.

\bibitem{Grantfibrations}
Mark Grant.
\newblock Topological complexity, fibrations and symmetry.
\newblock {\em Topology Appl.}, 159(1):88--97, 2012.

\bibitem{grant2019symmetrized}
Mark Grant.
\newblock Symmetrized topological complexity.
\newblock {\em Journal of Topology and Analysis}, 11(02):387--403, 2019.

\bibitem{Mapping-theorems-for-TC}
Mark Grant, Gregory Lupton, and John Oprea.
\newblock A mapping theorem for topological complexity.
\newblock {\em Algebr. Geom. Topol.}, 15(3):1643--1666, 2015.

\bibitem{hatcher}
Allen Hatcher.
\newblock {\em Algebraic topology}.
\newblock Cambridge University Press, Cambridge, 2002.

\bibitem{eqlscategory}
Wac{\l}aw Marzantowicz.
\newblock A {$G$}-{L}usternik-{S}chnirelman category of space with an action of a compact {L}ie group.
\newblock {\em Topology}, 28(4):403--412, 1989.

\bibitem{Naskar}
Bikramaditya Naskar and Soumen Sarkar.
\newblock On {LS}-category and topological complexity of some fiber bundles and {D}old manifolds.
\newblock {\em Topology Appl.}, 284:107367, 14, 2020.

\bibitem{PaulSen}
Amit~Kumar Paul and Debasis Sen.
\newblock An upper bound for higher topological complexity and higher strongly equivariant complexity.
\newblock {\em Topology Appl.}, 277:107172, 18, 2020.

\bibitem{RUD2010}
Yuli~B. Rudyak.
\newblock On higher analogs of topological complexity.
\newblock {\em Topology Appl.}, 157(5):916--920, 2010.

\bibitem{sarkargpps}
Soumen Sarkar and Peter Zvengrowski.
\newblock On generalized projective product spaces and {D}old manifolds.
\newblock {\em Homology Homotopy Appl.}, 24(2):265--289, 2022.

\bibitem{Sva}
Albert~S. \v{S}varc.
\newblock The genus of a fiber space.
\newblock {\em Dokl. Akad. Nauk SSSR (N.S.)}, 119:219--222, 1958.

\end{thebibliography}

\end{document}